\documentclass[a4paper,11pt,reqno]{amsart}

\usepackage[hmargin=2.5cm,bmargin=2.5cm,tmargin=2.2cm]{geometry}
\usepackage[utf8]{inputenc}
\usepackage{amsmath, amssymb, amsthm}
\usepackage{mathtools, stackrel, bbm}
\usepackage{comment,cite}	
\usepackage[unicode]{hyperref}
\hypersetup{
    colorlinks=true,
    linkcolor=blue,
    citecolor=red,
    filecolor=magenta,      
    urlcolor=cyan,
    pdftitle={OU on graphs},
    bookmarks=true,
    linktocpage=true,
}
\usepackage[linguistics]{forest}
\usepackage[shortlabels]{enumitem}
\usepackage{marginnote}
\usepackage{color}

\usepackage{tikz}
\usepackage{tikz-cd}

\newcommand{\bbC}{\mathbb{C}}
\newcommand{\bbK}{\mathbb{K}}
\newcommand{\bbN}{\mathbb{N}}
\newcommand{\bbR}{\mathbb{R}}

\newcommand{\calF}{\mathcal{F}}

\newcommand{\calH}{\mathcal{H}}
\newcommand{\calL}{\mathcal{L}}
\newcommand{\calO}{\mathcal{O}}
\newcommand{\calS}{\mathcal{S}}
\newcommand{\calT}{\mathcal{T}}
\newcommand{\calV}{\mathcal{V}}

\DeclareMathOperator{\one}{{\mathbf{1}}} % constant function with value one
\DeclareMathOperator{\zero}{{\mathbf{0}}} % constant function with value zero
\DeclareMathOperator{\re}{Re} % real part
\newcommand{\argument}{\mathord{\,\cdot\,}} % argument dot for functions (with correct spacing)
\newcommand{\dx}{\;\mathrm{d}} % differential (for use at the end of integrals)
\newcommand{\dxShort}{\mathrm{d}} % differential without extra space (for use in differential quotients)
\DeclareMathOperator{\linSpan}{span} % linear span
\DeclareMathOperator{\Fix}{Fix} % fixed space of an operator or semigroup
\DeclareMathOperator{\support}{supp} % support of a function
\newcommand{\norm}[1]{\left\lVert #1 \right\rVert} % norm
\newcommand{\modulus}[1]{\left\lvert #1 \right\rvert} % modulus
\newcommand{\duality}[2]{\left\langle#1\, ,\, #2\right\rangle} % duality / scalar product
\newcommand{\dom}[1]{\operatorname{dom}\left(#1\right)} % domain of an operator
\newcommand\restrict[1]{\raisebox{-.5ex}{$|$}_{#1}} %restriction of a map
\newcommand{\cardinality}[1]{\# #1} %cardinality of a set

\newcommand{\spec}{\sigma} % spectrum
\newcommand{\Graph}{\mathcal{T}}   % tree 
\newcommand{\VertexSet}{\mathsf{V}}    % vertex set
\newcommand{\LeafSet}{\mathsf{L}}    % vertex set
\newcommand{\EdgeSet}{\mathsf{E}}    % edge set
\newcommand{\oVertex}{\mathsf{o}} %vertex o (root)
\newcommand{\vVertex}{\mathsf{v}} %vertex v
\newcommand{\wVertex}{\mathsf{w}} %vertex w
\newcommand{\zVertex}{\mathsf{z}} %vertex z
\newcommand{\eEdge}{\mathsf{e}} %edge e
\newcommand{\edge}[2]{\left[#1\, ,\, #2\right]} % edge [v,w]
\newcommand{\halflines}{m_\infty} % total number of half-lines

\newcommand{\Dirichlet}{\mathrm{D}}
\newcommand{\Neumann}{\mathrm{N}}
\newcommand{\Wentzell}{\mathrm{W}}

\newcommand{\OU}{\mathcal{A}} % OU operator
\newcommand{\OUDir}{\OU^{\Dirichlet}} % OU operator (Dirichlet realisation)
\newcommand{\OUNeu}{\OU^{\Neumann}} % OU operator (Neumann realisation)
\newcommand{\OUWen}{\OU^{\Wentzell}} % OU operator (Wentzell realisation)

\newcommand{\SemiX}{ \left(e^{-t\OU^{\mathrm X}}\right)_{t\ge 0} } % OU semigroup 
\newcommand{\SemiDir}{ \left(e^{-t\OUDir}\right)_{t\ge 0} } % OU semigroup (Dirichlet realisation)
\newcommand{\SemiNeu}{\left(e^{-t\OUNeu}\right)_{t\ge 0} } % OU semigroup (Neumann realisation)
\newcommand{\form}{\mathfrak{a}} % form
\newcommand{\formDir}{\form^{\Dirichlet}} % form (Dirichlet realisation)
\newcommand{\formNeu}{\form^{\Neumann}} % form (Neumann realisation)

\newcommand{\domX}[1]{\operatorname{dom}^{\mathrm X}\left(#1\right)} % domain of X
\newcommand{\domDir}[1]{\operatorname{dom}^{\Dirichlet}\left(#1\right)} % domain of Dirichlet realisation
\newcommand{\domNeu}[1]{\operatorname{dom}^{\Neumann}\left(#1\right)} % domain of Neumann realisation

\newcommand{\OUXLevel}[1]{\mathfrak{A}_{#1}^{\mathrm X}}  % Reduced OU operator
\newcommand{\OUDirLevel}[1]{\mathfrak{A}_{#1}^{\Dirichlet}}  % Reduced OU operator (Dirichlet realisation)
\newcommand{\OUNeuLevel}[1]{\mathfrak{A}_{#1}^{\Neumann}} % Reduced OU operator (Neumann realisation)

\theoremstyle{definition}
\newtheorem{definition}{Definition}[section]
\newtheorem{remark}[definition]{Remark}
\newtheorem{remarks}[definition]{Remarks}
\newtheorem*{remark*}{Remark}
\newtheorem*{remarks*}{Remarks}
\newtheorem{example}[definition]{Example}

\theoremstyle{plain}
\newtheorem{proposition}[definition]{Proposition}
\newtheorem{lemma}[definition]{Lemma}
\newtheorem{theorem}[definition]{Theorem}
\newtheorem{corollary}[definition]{Corollary}

\numberwithin{equation}{section} % enumerate formulas within sections

\begin{document}
    \title[Ornstein-Uhlenbeck on trees]{Ornstein--Uhlenbeck semigroup on rooted trees}
\author[S.~Arora]{Sahiba Arora}
\address{Sahiba Arora, Leibniz Universität Hannover, Institut für Analysis, Welfengarten 1, 30167 Hannover, Germany}
\email{sahiba.arora@math.uni-hannover.de}

\author[M. Kramar Fijavž]{Marjeta Kramar Fijavž}
\address{Marjeta Kramar Fijavž, Faculty of Civil and Geodetic Engineering, University of Ljubljana, Jamova 2, 1000 Ljubljana, Slovenia and Institute of mathematics, physics and mechanics, Jadranska 19, 1000 Ljubljana, Slovenia}
\email{marjeta.kramar@fgg.uni-lj.si}

\author[D.~Mugnolo]{Delio Mugnolo}
\address{Delio Mugnolo, Lehrgebiet Analysis, Fakultät Mathematik und Informatik, FernUniversität in Hagen, 58084 Hagen, Germany}
\email{delio.mugnolo@fernuni-hagen.de}

\author[A.~Rhandi]{Abdelaziz Rhandi}
\address{Abdelaziz Rhandi, Dipartimento di Matematica, Università degli Studi di Salerno, Via Giovanni Paolo II 132, I‑ 84084 Fisciano (SA), Italy}
\email{arhandi@unisa.it}

\subjclass[2020]{Primary 35P20; Secondary 34B45, 47D07, 35R02, 31C25}

\keywords{Ornstein–Uhlenbeck operators, infinite metric graphs, $C_0$-semigroups, Dirichlet forms, invariant measure, discrete spectrum}
\date{\today}
\begin{abstract}
    We study Ornstein--Uhlenbeck operators on rooted metric trees equipped with a Gaussian-type measure. Using form methods, we construct Dirichlet and Neumann realisations corresponding, respectively, to killing and reflection at the root. The associated semigroups are symmetric, analytic and positivity preserving; the Dirichlet semigroup is sub-Markovian, while the Neumann semigroup is Markovian and admits the Gaussian measure as its unique invariant measure up to scalar multiples. We prove compactness of the resolvent and derive linear eigenvalue asymptotics. For regular rooted trees, we adapt the Naimark--Solomyak decomposition to the Gaussian weighted setting, reducing the operators to one-dimensional half-line problems and obtaining refined spectral localisation and lower bounds.
\end{abstract}

\maketitle
\section{Introduction}

The purpose of this article is to study the Ornstein--Uhlenbeck operator
\[
    (\OU f)(x) = -\frac{1}{2} f''(x) + \modulus{x}f'(x)
\]
on rooted metric trees with compact core and finitely many infinite edges. The choice of a root is essential:~it provides the radial coordinate entering the unbounded drift term.

The Ornstein--Uhlenbeck operator is a classical prototype of a second-order
operator with unbounded drift. Originating in the mathematical description of
the Ornstein--Uhlenbeck stochastic process, it has become a standard model in
the theory of diffusion semigroups, invariant Gaussian measures, and elliptic
operators with unbounded coefficients; see, for instance,
\cite{Lorenzi2017,MetafunePrussRhandiSchnaubelt2002, MetafunePallaraPriola2002, Lunardi1997}. 
Its study on non-compact metric graphs was initiated in
\cite{MugnoloRhandi2022}, where Ornstein--Uhlenbeck semigroups on metric star
graphs were constructed and analysed explicitly.
There, the authors considered a metric star graph consisting of finitely many half-lines and
showed that a natural Gaussian measure gives rise to a self-adjoint Ornstein--Uhlenbeck operator with standard transmission conditions at the central vertex. They proved generation of a symmetric analytic Markov semigroup, identified the invariant measure, obtained an explicit heat kernel, and described the spectrum in terms of the number of edges. Our aim is to
extend part of this picture from star graphs to rooted metric trees.

The general framework is that of metric graphs as metric measure spaces. In the classical setting, one equips the graph with the Euclidean distance along edges and with the Lebesgue measure. The corresponding Dirichlet form then leads to the usual Kirchhoff Laplacian. As already observed in \cite{MugnoloRhandi2022}, changing the underlying measure changes the
associated differential operator. In the present paper we equip $\Graph$ with the Gaussian-type measure
\[
    \mu(\dxShort x)=G(\modulus{x})\dx x;
\]
where $G$ is the one-dimensional Gaussian density. The associated weighted Dirichlet form naturally gives rise to the Ornstein--Uhlenbeck expression on the edges, together with suitable transmission conditions at the vertices.

We study the Dirichlet and Neumann realisations, distinguished by the boundary condition at the root, corresponding respectively to killed and conservative dynamics. This framework yields the basic semigroup theory, compact resolvent, and the invariant-measure result for
the Neumann realisation.

A central theme of the paper is spectral asymptotics. In the general case, we prove that the spectrum is discrete and
that the eigenvalues grow linearly. The leading asymptotic coefficient depends only on the number of infinite edges, reflecting the fact that the high-energy
behaviour is determined by the geometry at infinity.

For regular rooted trees, we obtain more refined information by adapting
decomposition methods for radial operators on regular trees \cite{Carlson2000, NaimarkSolomyak2000, NaimarkSolomyak2001,Solomyak2004, SobolevSolomyak2002}. This reduces the analysis to a family of one-dimensional Ornstein--Uhlenbeck operators on half-lines, with the branching structure encoded by the radial branching function. Although a full
spectral description as in the star-graph case is no longer available, this decomposition yields effective localisation estimates and explicit lower bounds for the eigenvalues.

\subsection*{Outline of the article}

The article is organised as follows. In Section~\ref{sec:rooted-trees} we recall the relevant metric graph formalism, introduce rooted metric trees, and define the weighted Sobolev spaces used throughout the paper. In Section~\ref{sec:ou-general} we construct the Dirichlet and Neumann realisations of the Ornstein--Uhlenbeck operator by form methods, identify their operator domains, and establish their basic semigroup properties and spectral asymptotics; we also briefly discuss a Wentzell-type realisation at the root. In Section~\ref{sec:regular-trees} we restrict to regular rooted trees, adapt the Naimark--Solomyak decomposition, and prove refined spectral bounds. Finally, the Appendix~\ref{appendix:half-line} collects the one-dimensional half-line eigenvalue asymptotics needed in the main text.

\section{Rooted Metric Trees and Function Spaces}
    \label{sec:rooted-trees}

In this section we introduce the geometric and functional analytic framework used throughout the paper. We fix a rooted metric tree, define the associated Gaussian measure, and introduce the relevant function spaces.

\subsection{Rooted metric trees}

Let $\widetilde{\Graph}$ be a compact, rooted metric tree with vertex set $\VertexSet$. We denote by $\oVertex \in \VertexSet$, the root of the tree and by $\emptyset\ne \LeafSet\subset\VertexSet \setminus\{\oVertex\}$ the subset of all leaves, i.e., all vertices of degree one excluding the root. Our object of interest is the rooted tree graph ${\Graph}$, obtained from $\widetilde{\Graph}$ by attaching, to each vertex in
$\LeafSet$, a finite non-zero number of half-lines.
This construction allows us to model infinite trees obtained by attaching finitely many half-lines to boundary vertices.
The edge set of $\Graph$ is denoted by $\EdgeSet$ and we write $\EdgeSet_\infty\subset\EdgeSet$ to denote the set of half-lines attached to the leaves of $\widetilde{\Graph}$ with
\[
    \halflines\coloneqq \cardinality{\EdgeSet_\infty}.
\]

As usual, the edges of the tree are treated as non-degenerate line segments, i.e., each $\eEdge\in \EdgeSet$ is assigned a length $\ell_\eEdge>0$.
Note that by $x\in\Graph$, we mean any point on a line segment representing an edge, not necessarily a vertex.
Because $\Graph$ is a tree, two points $x,y\in \Graph$ have a unique ancestor at maximal distance from the root, which we denote by $x\wedge y$. If $x\wedge y=x$, we say that $y$ is \emph{below} $x$ in $\Graph$, and we write $x \preceq y$; if $x\preceq y$ but $x\ne y$, then we write $x \prec y$. Observe that the path connecting $x$ and $x\wedge y$ is homeomorphic to a real interval.
The shortest path metric on $\Graph$ induced by the Euclidean distance on $\bbR$ is then naturally defined as
\[
    d(x,y)\coloneqq \modulus x+ \modulus y - 2\modulus{x\wedge y},
\]
where we denote 
\[
    \modulus{x}\coloneqq d(x,\oVertex)
\]
for each $x\in \Graph$. Observe that $\modulus y= \modulus x+d(x,y)$ if $x\preceq y$. In particular, our assumptions enforce that the \emph{radius} of the tree 
$
    r(\Graph)\coloneqq \sup_{x \in \Graph}\modulus x = \infty.
$

If $\vVertex$ and $\wVertex$ are respectively the \emph{initial} and \emph{terminating} points of an edge $\eEdge$, then we write
\[
    \eEdge = \edge{\vVertex}{\wVertex} \coloneqq \{ x \in \Graph : \vVertex \preceq  x \preceq \wVertex\}
\]
and say that $\eEdge$ is \emph{emanating} from the vertex $\vVertex$. By $\modulus{\eEdge}$ we mean the length of the edge $\eEdge$, i.e., $\modulus{\eEdge}=d(\vVertex,\wVertex)$. Further, each edge $\eEdge$ incident to a vertex $\vVertex$ is denoted by $\eEdge \sim \vVertex$.
We fix the following convention for edge parametrisations:~An edge $\eEdge = \edge{\vVertex}{\wVertex}$ is parametrised by the arc-length coordinate $s\in[0,\modulus{\eEdge}]$ with $s=0$ corresponding to $\vVertex$ and $s=\modulus{\eEdge}$ to $\wVertex$. 

\subsection{Function spaces on the tree}

Let $\Graph= (\VertexSet, \EdgeSet)$ be a rooted tree.
Following \cite{Mug19}, a metric measure structure on $\Graph$ naturally induces the relevant function spaces on the graph. In particular, we consider the spaces of continuous functions and compactly supported continuous functions
\[
    \mathrm C(\Graph),\quad \mathrm C_{\mathrm c}(\Graph)
\]
as well as the Hilbert space
\[
    L^2_\mu(\Graph)\coloneqq\bigoplus_{\eEdge \in \EdgeSet}L^2_\mu(\eEdge),
\]
defined with respect to a given measure $\mu$ on $\Graph$. Likewise, for $k\in\bbN$, we introduce the edge-wise Sobolev spaces
\[
    \widetilde{H^k_\mu} (\Graph)\coloneqq\bigoplus_{\eEdge \in \EdgeSet}H^k_\mu(\eEdge),\quad k\in \bbN,
\]
and define the first-order Sobolev space on the graph by imposing continuity across vertices:
\[
    H^1_{\mu}(\Graph)\coloneqq\widetilde{H^1_\mu}(\Graph)\cap \mathrm C(\Graph).
\]
Note that derivatives are always understood in the weak sense on each edge.
Consider an edge $\eEdge= \edge{\vVertex}{\wVertex}$ and let $s\in [0, \modulus \eEdge]$. For each function $f$ on $\eEdge$, we denote by $f_{\eEdge}'(s)$ its derivative with respect to this coordinate. The \emph{outward normal derivative} of $f$ at a vertex $\vVertex$ along the edge $\eEdge$ incident to $\vVertex$ (pointing away from $\vVertex$) is then
\[
    \frac{\partial f_{\eEdge}}{\partial n}(\vVertex) := 
        \begin{cases}
            f_{\eEdge}'(0), \qquad & \text{if }\eEdge \text{ is emanating from }\vVertex\\ 
           - f_{\eEdge}'(\modulus{\eEdge}), \qquad & \text{if }\eEdge \text{ terminates at }\vVertex. 
        \end{cases}
\]

Finally, integration over the graph is understood in the natural edge-wise sense, i.e.,
\[
    \int_{\Graph} f~\mu(\dxShort x) \coloneqq \sum_{\eEdge \in \EdgeSet} \int_{\eEdge} f~\mu(\dxShort x).
\]

Given an arbitrary Borel measure on $\bbR$, we can lift it to a measure on  $\Graph$ defined as the direct sum measure of its restrictions to each edge. In the context of Ornstein--Uhlenbeck operators, the most canonical choices are the Lebesgue measure and the Gaussian measure $\mu$:~the latter is the measure whose density with respect to the Lebesgue measure is the Gaussian function, i.e., 
\begin{equation}
    \label{eq:gaussian}
    \mu(\dxShort x)\coloneqq G(x)\dx x\coloneqq  \frac{2}{\sqrt{\pi}} e^{-\modulus{x}^2} \dx x, \quad x\in\Graph.    
\end{equation}
Being absolutely continuous with respect to the Lebesgue measure, $\mu(\dxShort x)$ is locally finite with respect to $d$. Note that the density of the Gaussian measure depends only on the distance from the root. This reflects the radial nature of the Ornstein--Uhlenbeck drift.

Recall that each edge $\eEdge \in \EdgeSet$ is represented by a line segment $[s,t]$ or $[s,\infty)$. In the sequel, for the sake of convenience, we accordingly use the notation
\[
     f(x)\bigg|_{\eEdge}\coloneqq  f(x)\bigg|_{s}^{t} \quad \text{or}\quad f(x)\bigg|_{\eEdge}\coloneqq f(x)\bigg|_{s}^{\infty}.
\]

\begin{lemma}\label{lem:integration-by-parts}
    On each edge $\eEdge\in \EdgeSet$, 
    the Gaussian measure~\eqref{eq:gaussian} satisfies the integration by parts formula
    \begin{equation}
        \label{eq:integration-by-parts-gaussian}
        \int_{\eEdge} f(x)h'(x) \mu(\dxShort x) = f(x)h(x)G(x)\bigg|_{\eEdge} - \int_{\eEdge} \big(f'(x)-2\modulus{x} f(x)\big) h(x)\mu(\dxShort x)
    \end{equation}
    for all $f,h\in H_{\mu}^1(\Graph)$; where $G$ denotes the Gaussian function defined in \eqref{eq:gaussian}.
\end{lemma}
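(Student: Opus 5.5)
The plan is to reduce \eqref{eq:integration-by-parts-gaussian} to the classical Lebesgue integration by parts formula on a single interval, applied to the product $fG$. Fix an edge $\eEdge=\edge{\vVertex}{\wVertex}$ with arc-length coordinate $s$. Along $\eEdge$ we have $\modulus{x}=\modulus{\vVertex}+s$, so $G(x)=\frac{2}{\sqrt\pi}e^{-(\modulus{\vVertex}+s)^2}$ is smooth in $s$ with
\[
G'(x)=-2\modulus{x}\,G(x).
\]
Writing $\mu(\dxShort x)=G(x)\dx x$, the left-hand side of \eqref{eq:integration-by-parts-gaussian} is $\int_\eEdge (fG)\,h'\dx x$.

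\emph{Bounded edges.} If $\eEdge$ has finite length, $G$ is bounded above and below by positive constants on $\eEdge$. Hence $H^1_\mu(\eEdge)=H^1(\eEdge)$, and $f,h$ are absolutely continuous there. The product rule gives $(fG)'=f'G-2\modulus{x}fG$. The ordinary formula $\int_a^b uv'\,dx=uv|_a^b-\int_a^b u'v\,dx$ with $u=fG$, $v=h$ then yields \eqref{eq:integration-by-parts-gaussian} directly.

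\emph{Half-lines.} For $\eEdge\in\EdgeSet_\infty$, parametrised by $[0,\infty)$, I would apply the bounded case on $[0,R]$ and let $R\to\infty$. This step is the main obstacle: both the boundary term at infinity and the convergence of the integrals need justification.

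The terms $\int f h' G$ and $\int f' h G$ converge absolutely by Cauchy--Schwarz in $L^2_\mu$. For the drift term $\int \modulus{x} f h G$, I need a weighted Hardy/Poincaré-type bound, namely that $\modulus{x} f\in L^2_\mu$ whenever $f\in H^1_\mu$ on the half-line. I would prove it by applying the already established bounded-interval formula to the pair $(f,f)$ and the weight, or equivalently by computing $\int_0^R (f^2)' G$. This gives
\[
\int_0^R 2\modulus{x} f^2 G = f^2G\big|_0^R+\dots\,,
\]
and after absorbing the cross term $2\int \modulus{x}\modulus{f}\modulus{f'}G \le \tfrac12\int \modulus{x}^2 f^2G\cdot\varepsilon+\dots$ by Young's inequality, one obtains $\int_0^R \modulus{x}^2 f^2G\le C(\norm{f}_{H^1_\mu}^2+f(0)^2)$ uniformly in $R$. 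This uses $\modulus{x}\ge1$ away from a compact set, with the compact part handled by the bounded case. The $G(x)\modulus{x}$ factor from differentiating $G$ must be matched carefully against $\modulus{x}^2$, but this is the standard Gaussian estimate.

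Once $\modulus{x}fh\in L^1_\mu$ is known, all integrals over $[0,R]$ converge as $R\to\infty$, so the limit $L=\lim_{R\to\infty}f(R)h(R)G(R)$ exists. It remains to show $L=0$. Set $\phi=fhG$. Then $\phi$ is integrable, since $fh\in L^1_\mu$, and its derivative $\phi'=(f'h+fh'-2\modulus{x}fh)G$ is integrable by the above. Thus $\phi$ has a limit at infinity, and integrability of $\phi$ forces that limit to be $0$. This justifies the notation $f h G\big|_s^\infty=-f(s)h(s)G(s)$ and completes the proof of \eqref{eq:integration-by-parts-gaussian} on half-lines.
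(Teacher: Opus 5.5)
Your proposal is correct and rests on the same idea as the paper's proof, namely ordinary integration by parts applied to $fG$ with $G'=-2\modulus{x}G$; unlike the paper, which invokes the Lebesgue formula without comment, you also justify the half-line case by truncating to $[0,R]$, proving $\modulus{x}fh\in L^1_\mu$, and showing that $fhG$ vanishes at infinity. One simplification: your computation of $\int_0^R(\modulus{f}^2)'G$ alone gives $\int\modulus{x}\modulus{f}^2G\le\norm{f}_{L^2_\mu}^2+\norm{f'}_{L^2_\mu}^2+\modulus{f(\vVertex)}^2G(\vVertex)$ with no cross term to absorb, and Cauchy--Schwarz then already yields $\modulus{x}fh\in L^1_\mu$, so the bound on $\int\modulus{x}^2\modulus{f}^2G$ is unnecessary (that bound would come from differentiating $\modulus{x}\modulus{f}^2G$, not from your stated computation).
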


\begin{proof}
    Let $f,h\in H_{\mu}^1(\Graph)$. Since 
    \[
        \int_{\eEdge} f(x)h'(x) \mu(\dxShort x) = \int_{\eEdge} f(x)h'(x) G(x)\dx x,
    \]
    the integration by parts formula for the Lebesgue measure and $G'(x)=-2\modulus{x}G(x)$ yield that
    \begin{align*}
        \int_{\eEdge} f(x)h'(x) \mu(\dxShort x) & = f(x)h(x)G(x)\bigg|_{\eEdge} - \int_{\eEdge} \big(f'(x)G(x)-2\modulus{x}f(x)G(x)\big) h(x) \dx x\\
                                                & =  f(x)h(x)G(x)\bigg|_{\eEdge} - \int_{\eEdge} \big(f'(x)-2\modulus{x} f(x)\big) h(x)\mu(\dxShort x),
    \end{align*}
    as desired.
\end{proof}

\section{Ornstein--Uhlenbeck Realisations and Semigroup Properties}
    \label{sec:ou-general}

In this section we introduce the Ornstein--Uhlenbeck operators on the rooted tree $\Graph$ via quadratic form methods. We define Dirichlet and Neumann realisations, characterise their domains, and establish their basic semigroup properties.

The \emph{Ornstein–Uhlenbeck operator on the rooted tree} $\Graph= (\VertexSet, \EdgeSet)$ is defined as
\[
    (\OU f)(x) = -\frac{1}{2} f''(x) + \modulus{x}f'(x)\qquad (x\notin \VertexSet).
\]

We now define two realisations of the Ornstein--Uhlenbeck expression by imposing vertex conditions. Let $f$ be a function on $\Graph$ and $\vVertex \in \VertexSet$. We write $f_{k}$ to denote the restriction of $f$ to an edge $\eEdge_k$ incident to $\vVertex$. At $\vVertex\in\VertexSet$, we consider the boundary conditions
\begin{equation}
    \label{eq:BC}\tag{BC$_{\vVertex}$}
    \begin{aligned}
        f_{j}(\vVertex) = f_k(\vVertex)&
        \quad  &\text{for }\eEdge_j, \eEdge_k \sim \vVertex,\\
        \sum_{\eEdge_k \sim \vVertex  } \frac{\partial f_k}{\partial n}(\vVertex)  =0.&
    \end{aligned}
\end{equation}
These are the usual \emph{continuity} and \emph{Kirchhoff} boundary conditions, respectively; jointly also called the \emph{standard conditions}.
Note that a function lies in $H^1_\mu(\Graph)$ if it lies in $\widetilde{H^1_\mu}(\Graph)$ and it satisfies the first condition in~\eqref{eq:BC} for all $\vVertex\in\VertexSet$.

\subsection{The Dirichlet and Neumann realisations}
    \label{sec:dirichlet-neumann-realisation}

On $\Graph$, the \emph{Dirichlet realisation} of the Ornstein--Uhlenbeck operator is given by the operator $\OUDir \coloneqq (\OU, \domDir{\OU})$, where
\[
    \domDir{\OU} \coloneqq \left\{ f\in \widetilde{H_{\mu}^2}(\Graph) : f(\oVertex)=0 \text{ and } f \text{ satisfies}~\eqref{eq:BC} \text{ for all } \vVertex\in\VertexSet \setminus \{\oVertex\}\right\}.
\]
Analogously, \emph{Neumann realisation} of Ornstein--Uhlenbeck operator is given by the operator $\OUNeu \coloneqq (\OU, \domNeu{\OU})$, where
\[
    \domNeu{\OU} \coloneqq \left\{ f\in \widetilde{H_{\mu}^2}(\Graph) :  f \text{ satisfies}~\eqref{eq:BC}\text{ for all }\vVertex\in\VertexSet\right\}.
\]

\subsubsection{Quadratic forms and operator identification}

We apply form methods to show that both realisations of $\OU$ generate a $C_0$-semigroup on $L_{\mu}^2(\Graph)$.
First, we introduce a sesquilinear form $\form(\argument,\argument)$ given by
\[
    \form(f,g)\coloneqq \frac{1}{2} \int_{\Graph} f'(x)\overline{g'(x)} \mu(\dxShort x).
\]
The Dirichlet and Neumann forms are given by restricting $\form$ to the domains
\begin{align*}
    \domDir{\form}\coloneqq H^1_{0,\mu}(\Graph) &\coloneqq \{f\in H^1_\mu(\Graph): f(\oVertex)=0\}
    \text{ and}\\
    \domNeu{\form}\coloneqq H^1_{\mu}(\Graph)
\end{align*}
respectively. We use the shorthand $\formDir \coloneqq (\form, \domDir{\form})$ and $\formNeu \coloneqq (\form, \domNeu{\form})$.

Before we relate the above operators and forms in the obvious way, let us record a few basic properties of the two forms.
We refer to \cite{FukOshTak10,Ouh05} for all the terminology regarding Dirichlet forms and their associated semigroups. In particular, a symmetric form $(\mathfrak q, \dom{\mathfrak q})$ on $L^2_{\mu}(\Graph)$ is said to be \emph{regular} if the space $\dom{\mathfrak q} \cap \mathrm C_{\mathrm c}(\Graph)$ is dense in $\dom{\mathfrak q}$ with the form norm and dense in $\mathrm C_{\mathrm c}(\Graph)$ with the uniform norm. Also, $(\mathfrak q, \dom{\mathfrak q})$ is called \emph{strongly local} if for compactly supported functions $f,g\in \dom{\mathfrak q}$ whenever $f$ is constant in a neighbourhood of $\support g$, then $\mathfrak q(f,g)=0$.

\begin{proposition}
    \label{prop:form-properties}
    The forms $\formDir$ and $\formNeu$ are closed, densely defined, symmetric, non-negative, and strongly local Dirichlet forms on $L^2_\mu(\Graph)$. In addition, $\formDir$ is regular.

    Furthermore, the form domain $\domNeu{\form}$ and, in turn, $\domDir{\form}$, is compactly embedded in $L^2_\mu(\Graph)$.
\end{proposition}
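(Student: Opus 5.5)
The plan is to verify the elementary properties first and then concentrate on compactness. Symmetry and non-negativity are immediate from the definition of $\form$. For density, we note that compactly supported functions on $\Graph$ that are smooth on each edge, continuous at the vertices and vanish near $\oVertex$ lie in $\domDir{\form}\subset\domNeu{\form}$. Such functions are dense in $\bigoplus_{\eEdge}L^2_\mu(\eEdge)$, since $\mu$ has a locally bounded density. For closedness, the form norm is $\norm{f}_{H^1_\mu}^2=\norm{f}^2_{L^2_\mu}+\norm{f'}^2_{L^2_\mu}$. Because $G$ is bounded above and below on compact sets, $H^1_\mu$-convergence implies $H^1$-convergence on every compact piece of every edge, hence locally uniform convergence. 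Thus continuity at the vertices and the condition $f(\oVertex)=0$ pass to limits, and completeness of $\widetilde{H^1_\mu}(\Graph)$, a direct sum of weighted Sobolev spaces, gives completeness of both domains.

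For the Markov property we use the normal contraction criterion. If $f\in H^1_\mu(\Graph)$ is real, then $(0\vee f)\wedge 1$ is again continuous and edgewise $H^1$, with $\modulus{((0\vee f)\wedge1)'}\le\modulus{f'}$ almost everywhere. Moreover it vanishes at $\oVertex$ whenever $f$ does. Hence both forms are Dirichlet forms; complex functions are handled in the standard way via the real parts. Strong locality follows because $f'=0$ almost everywhere on a neighbourhood of $\support g$, so $f'\overline{g'}=0$ almost everywhere.

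For regularity of $\formDir$, we first truncate: for $f\in H^1_{0,\mu}(\Graph)$ set $f_R=f\chi_R$, where $\chi_R(x)=\chi(\modulus{x}/R)$ with a smooth cutoff $\chi$. Then $\norm{f-f_R}_{H^1_\mu}\to0$ by dominated convergence, because $\modulus{\chi_R'}\le C/R$. The same truncation shows the statement would also hold for $\formNeu$; the point is that $\mathrm C_{\mathrm c}(\Graph)$ contains functions not vanishing at $\oVertex$. For uniform density in $\mathrm C_{\mathrm c}(\Graph)$ I would interpret the root as a boundary point, removed from the state space for the Dirichlet form. Then Stone–Weierstrass, or direct edgewise mollification respecting the vertex values, yields density of $H^1_\mu\cap\mathrm C_{\mathrm c}$.

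The main obstacle is the compact embedding $H^1_\mu(\Graph)\hookrightarrow L^2_\mu(\Graph)$. On the compact core together with the initial segments $\{\modulus{x}\le R\}$ of the half-lines, the weight is comparable to $1$, and Rellich's theorem on finitely many intervals gives compactness. It therefore suffices to prove a tail estimate
\[
\int_{\{\modulus{x}>R\}}\modulus{f}^2\,\mu(\dxShort x)\le \varepsilon(R)\,\norm{f}^2_{H^1_\mu},\qquad \varepsilon(R)\to0 .
\]
On each of the $\halflines$ half-lines we apply Lemma~\ref{lem:integration-by-parts} with $h=x$, or equivalently integrate $(\modulus{f}^2)'G$ against the identity $G'=-2\modulus{x}G$. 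This gives a Gaussian Hardy/Poincaré inequality
\[
\int_R^\infty 2x\modulus{f}^2G\dx x\le \modulus{f(R)}^2G(R)+2\int_R^\infty \modulus{f}\modulus{f'}G\dx x .
\]
By Cauchy–Schwarz and $x\ge R$, this yields $R\int_R^\infty\modulus{f}^2G\le \modulus{f(R)}^2G(R)+C\norm{f}\norm{f'}$. The boundary term is controlled by averaging over $R\in[R_0,R_0+1]$, or by the trace bound on a unit interval. Consequently the tail is $O(1/R)\norm{f}^2_{H^1_\mu}$. Finally, a bounded sequence is precompact on the core by Rellich, and its tails are uniformly small, so a diagonal argument finishes the proof. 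Since $\domDir{\form}$ carries the same norm, the compact embedding is inherited by it.
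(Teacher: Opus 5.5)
Your proposal is correct and follows the same route as the paper: Rellich on the core plus Gaussian tightness on the half-lines, contractions for the Markov property, and truncation/edgewise approximation for density and regularity. The one difference is that you derive the tail bound yourself from $G'=-2\modulus{x}G$ instead of citing Lorenzi's estimate; this works with your averaging over $R\in[R_0,R_0+1]$, or with a trace bound taken on $[R-1,R]$ rather than on $[R,R+1]$, where the factor $G(R)/G(x)$ grows like $e^{2R+1}$ and destroys uniformity.

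Your remark that regularity of $\formDir$ forces $\oVertex$ to be removed from the state space is correct, and more careful than the paper's wording. Under that reading, however, the form-norm approximants must also vanish near $\oVertex$, which the cutoff $\chi_R$ at infinity does not give. You need a second cutoff on the root edges; this is harmless, since $f(\oVertex)=0$ yields $\modulus{f(s)}\le s^{1/2}\norm{f'}_{L^2(0,s)}$ in the edge coordinate.
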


\begin{proof}
    The symmetry and non-negativity of the forms are immediate from the definition, while the argument for closedness is standard.

    \emph{Dense domain}:
    Since $\Graph$ is a metric graph, the space of functions that are smooth on each edge with compact support on the edges is dense in $L^2_\mu(\Graph)$. Such functions belong to $H^1_\mu(\Graph)$, hence $\domNeu{\form}$ is dense. Moreover, as the root $\oVertex$ has $\mu$-measure zero, these functions may be chosen to vanish at $\oVertex$, and thus lie in $H^1_{0,\mu}(\Graph)$; consequently, $\domDir{\form}$ is also dense in $L^2_\mu(\Graph)$.

    \emph{Compact domain}:
    Since $\Graph$ consists of a compact metric graph with finitely many half-lines attached, compactness is local apart from the behaviour at infinity.
    On the compact part, the embedding follows from the usual Rellich theorem.
    On each half-line, the Gaussian tail estimate used in the proof of
    \cite[Theorem~10.3.17]{Lorenzi2017} gives uniform $L^2_\mu$-tightness
    for bounded subsets of $H^1_\mu$. Together with Rellich compactness on finite
    subintervals, this yields compactness on each half-line. Since the number of
    half-lines is finite, $H^1_\mu(\Graph)$ is compactly embedded in $L^2_\mu(\Graph)$.

    \emph{Strongly local Dirichlet forms}:
    Let $f\in H^1_{\mu}(\Graph)$ and let $g$ be a normal contraction of $f$, i.e., 
    \[
        \modulus{g(x)-g(y)} \le \modulus{f(x)-f(y)}
        \text{ and }
        \modulus{g(x)} \le \modulus{f(x)}
        \qquad\text{for all }x,y\in \Graph.
    \]
    Then $g$ is also continuous. Moreover, by the one-dimensional contraction principle on each edge $\eEdge$, we have that $g_{\eEdge}\in H^1_\mu(\eEdge)$ and $\modulus{g_{\eEdge}'}\le \modulus{f_{\eEdge}'}$. In particular, $g\in H^1_\mu(\Graph)$ and $\form(g,g)\le \form(f,f)$.
    In addition, if $f(\oVertex)=0$, then $g(\oVertex)=0$.
    Thus both forms are Dirichlet forms.
    Finally, if $f$ is constant on a neighbourhood of $\support g$, then $f_{\eEdge}'=0$ almost everywhere on $\support g\cap \eEdge$ for all $\eEdge$, and in turn, $\form(f,g)=0$. Hence, the forms are strongly local.

    \emph{Regularity of $\formDir$}:
    By standard edgewise approximation, the space $\mathrm{Lip}_{\mathrm c}(\Graph)$ of all Lipschitz continuous functions that have compact support in the interior of $\Graph$ is dense in $H^1_{0,\mu}(\Graph)$ with respect to the form norm. Moreover, $\mathrm{Lip}_{\mathrm c}(\Graph)$ is uniformly dense in $\mathrm C_{\mathrm c}(\Graph)$, by edgewise piecewise affine approximation.
    Hence they form a core of $\formDir$, and therefore $\formDir$ is regular.
\end{proof}

We now relate the abstract operators associated with $\formDir$ and $\formNeu$ to the concrete Dirichlet and Neumann realisations of the Ornstein--Uhlenbeck operator defined above.

\begin{proposition}
    \label{prop:form-operator}
    The self-adjoint operator associated with the form $\formDir$ is precisely the Dirichlet realisation $\OUDir$ of the Ornstein--Uhlenbeck operator. Likewise, the self-adjoint operator associated with $\formNeu$ is precisely the Neumann realisation $\OUNeu$.
\end{proposition}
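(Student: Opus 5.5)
We argue in two steps, following the usual recipe for identifying operators associated with forms. By Proposition~\ref{prop:form-properties} the forms are closed, densely defined, symmetric and non-negative, so each has a unique associated self-adjoint operator $B$. Here $f\in\dom{B}$ with $Bf=h$ if and only if $f$ lies in the form domain and
\[
    \form(f,g)=\int_\Graph h\,\overline{g}\,\mu(\dxShort x)
\]
for all $g$ in the form domain. We treat the Neumann case in detail; the Dirichlet case is identical except that test functions vanish at $\oVertex$, so no condition arises there beyond $f(\oVertex)=0$.

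\emph{Step 1: $\OUNeu\subset B$.} Let $f\in\domNeu{\OU}$ and $g\in H^1_\mu(\Graph)$. Apply Lemma~\ref{lem:integration-by-parts} edgewise with the pair $(\overline{g}, f')$, where $f'\in H^1_\mu(\eEdge)$ on each edge. Since $-\frac12(f''-2\modulus{x}f')=\OU f$, this gives
\[
    \form(f,g)=\int_\Graph (\OU f)\,\overline g\,\mu(\dxShort x)+\frac12\sum_{\eEdge}f'\,\overline g\,G\Big|_{\eEdge}.
\]
Continuity of $g$ lets us regroup the boundary terms by vertex. Because $G$ depends only on $\modulus{x}$, it takes a single value at each vertex, so the contribution at $\vVertex$ is $-\frac12 G(\vVertex)\overline{g(\vVertex)}\sum_{\eEdge\sim\vVertex}\partial_n f_\eEdge(\vVertex)$, which vanishes by the Kirchhoff condition.

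The terms at infinity on the half-lines need a separate argument. For $\phi=f'\overline g\in W^{1,1}_\mu$ on a half-line, the function $\phi G$ is in $W^{1,1}$ of the half-line once we know $\modulus{x}\phi\in L^1_\mu$. So $\phi G$ has a limit at infinity, and that limit must be $0$ because $\phi G\in L^1$. Thus we must check $\modulus{x}f'\in L^2_\mu$ for $f\in H^2_\mu$. This follows from the weighted inequality $\int \modulus{x}^2\modulus{u}^2\,G\lesssim \int(\modulus{u}^2+\modulus{u'}^2)\,G$ for $u\in H^1_\mu$ on a half-line, which I would obtain by integrating $(\modulus{u}^2)'G$ by parts against $x$ (the standard Gaussian Hardy-type estimate, as in \cite{Lorenzi2017}). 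This is the main technical obstacle. Alternatively, I would first prove the identity for compactly supported $g$ and extend by density, provided $\OU f\in L^2_\mu$ is known.

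\emph{Step 2: $B\subset\OUNeu$.} Let $f\in\dom{B}$ with $Bf=h$. First take $g$ smooth with compact support inside a single edge. This shows that $(f'G)'=-2hG$ weakly, so $f'$ is locally $H^1$ and $-\frac12f''+\modulus{x}f'=h$ edgewise. Next take $g$ supported near a vertex $\vVertex$ with $g(\vVertex)=1$. The computation of Step 1 then yields $\sum_{\eEdge\sim\vVertex}\partial_nf_\eEdge(\vVertex)=0$ at every vertex in the Neumann case, and at every $\vVertex\ne\oVertex$ in the Dirichlet case. Continuity is inherited from $H^1_\mu$, and $f(\oVertex)=0$ holds in the Dirichlet case by definition of the form domain.

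It remains to show $f\in\widetilde{H^2_\mu}(\Graph)$, that is, $f''\in L^2_\mu$ on the half-lines; on compact edges this is immediate. Since $f''=2\modulus{x}f'-2h$, it suffices to prove $\modulus{x}f'\in L^2_\mu$. For this I would test the equation against $\modulus{x}^2\chi_R^2\overline{f'}$-type cutoffs, or more simply use the one-dimensional identity
\[
    \int \modulus{f''}^2G+2\int\modulus{f'}^2G=4\int\modulus{h}^2G,
\]
which holds up to boundary terms and is obtained via a cutoff argument. This would give $f'' \in L^2_\mu$ and hence $\modulus{x}f'\in L^2_\mu$ directly.

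Finally, both $B$ and $\OUNeu$ come out as self-adjoint extensions of one another: Step 1 gives $\OUNeu\subset B$, and Step 2 gives the reverse inclusion. Hence they coincide.
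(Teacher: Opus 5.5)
Your proposal is correct and follows the paper's proof. Both arguments use the same two inclusions: integration by parts with vertex-wise regrouping of the boundary terms for $\OUNeu\subset B$, and localised test functions to recover the edgewise equation and the Kirchhoff conditions for the reverse inclusion. Where the paper only appeals to a ``standard one-dimensional characterisation'' and silently drops the boundary terms at infinity, you supply the needed half-line estimates: $\modulus{x}f'\in L^2_\mu$ for $f\in\widetilde{H^2_\mu}(\Graph)$, and $f''\in L^2_\mu$ from the energy identity. These are exactly the content of Lemma~\ref{lem:reduced-H2-estimate}, which the paper proves later.
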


\begin{proof}
    We prove the assertion for the Dirichlet realisation, the Neumann case being analogous. Let $B^{\Dirichlet}$ denote the operator associated with $\formDir$. By definition,
    \begin{align*}
        \dom{B^{\Dirichlet}} &=\left\{f\in \domDir{\form} :\exists~g\in L^2_{\mu}(\Graph) \text{ with } \form(f,h)=\duality{g}{h}_{L_{\mu}^2(\Graph)}\forall~h\in \domDir{\form}\right\},\\
        B^{\Dirichlet}f      &=g.
    \end{align*}

    First, fix $f\in \domDir{\OU}$. Then $f\in \domDir{\form}$ and integration by parts formula~\eqref{eq:integration-by-parts-gaussian} gives for every $h\in \domDir{\form}$,
    \begin{align*}
        2\form(f,h) = \sum_{\eEdge\in\EdgeSet} \int_{\eEdge} f'(x)\overline{h'(x)} \mu(\dxShort x)
                   = \sum_{\eEdge\in\EdgeSet}\left[ f'(x)\overline{h(x)} G(x)\bigg|_{\eEdge} + 2\int_{\eEdge} (\OU f)(x)\overline{h(x)}\mu(\dxShort x)\right]
    \end{align*}
    Rearranging the boundary sum on the right-hand side vertex-wise and appealing to the vertex conditions fulfilled by
    $h\in \domDir{\form}$ and $f\in \domDir{\OU}$,
    it follows that $\form(f,h) = \duality{\OU f}{h}_{L_{\mu}^2(\Graph)}$. In turn, $\OUDir\subseteq B^{\Dirichlet}$.

    Conversely, let $f\in \dom{B^{\Dirichlet}}$. Then $f(\oVertex)=0, f$ is continuous at each vertex, and
    there exists $g\in L_{\mu}^2(\Graph)$
    such that
    \begin{equation}
	   \label{eq:form-formula}
	   \form(f,h)=\duality{g}{h}_{L_{\mu}^2(\Graph)} 
       \quad \text{for all } h\in \domDir{\form}.
    \end{equation}
    Fix $\eEdge\in\EdgeSet$ and choose $h\in \domDir{\form}$ such that $h$ is a smooth function with compact support on $\eEdge$ and $0$ elsewhere.
    By~\eqref{eq:form-formula}, integration by parts \eqref{eq:integration-by-parts-gaussian} on $\eEdge$, and the standard one-dimensional characterisation of the operator associated with the weighted Sobolev form on an interval, we obtain $f\restrict{\eEdge}\in H_{\mu}^2 (\eEdge)$ and $-\frac{1}{2} f''(x) + \modulus{x}f'(x)=g(x)$ whenever 
    $x\in \eEdge$. It follows that $f\in \widetilde{H_{\mu}^2(\Graph)}$ which allows us to integrate by parts in~\eqref{eq:form-formula}, as above, to get
    \begin{equation}
	\label{eq:form-formula-bis}
        \form(f,h) = \frac{1}{2} \sum_{\eEdge\in\EdgeSet} f'(x)\overline{h(x)} G(x)\bigg|_{\eEdge} + \duality{g}{h}_{L_{\mu}^2(\Graph)}
        \quad \text{for all } h\in \domDir{\form}.
    \end{equation}
    Comparing with~\eqref{eq:form-formula} and employing the surjectivity of the trace map
    \[
        \domDir{\form}\ni h\mapsto (h(\vVertex))_{\vVertex \in \VertexSet}\in \{ (a_{\vVertex})_{\vVertex \in\VertexSet}: a_{\oVertex}=0\},
    \]
    we deduce that the boundary term in~\eqref{eq:form-formula-bis} must vanish identically and, hence, that $f$ also satisfies the Kirchhoff condition from~\eqref{eq:BC} for all $\vVertex \in \VertexSet\setminus \{\oVertex\}$. It follows that $f\in \domDir{\OU}$. Coupling with $\OU f=g=B^{\Dirichlet}f$, we conclude that $\OUDir\supseteq B^{\Dirichlet}$.
\end{proof}

As a direct consequence of the compact embedding of the form domains, we obtain the following spectral property.

\begin{corollary}
    \label{cor:compact-resolvent}
    The operators $\OUDir$ and $\OUNeu$ have compact resolvent and hence pure point spectrum.
    Moreover, $\ker \OUNeu = \linSpan\{\one\}$ and $\OUDir$ is invertible.
\end{corollary}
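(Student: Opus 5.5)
The plan is to derive everything from Propositions~\ref{prop:form-properties} and~\ref{prop:form-operator}. By Proposition~\ref{prop:form-operator}, $\OUDir$ and $\OUNeu$ are the non-negative self-adjoint operators associated with the closed forms $\formDir$ and $\formNeu$.

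For these operators, $\dom{(\OU^{\mathrm X})^{1/2}}$ equals the form domain with equivalent norms. The resolvent $(I+\OU^{\mathrm X})^{-1}$ maps $L^2_\mu(\Graph)$ boundedly into the form domain with its form norm. Composing with the compact embedding from Proposition~\ref{prop:form-properties} shows the resolvent is compact.

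Compactness of the resolvent of a self-adjoint operator then gives a spectrum consisting of isolated eigenvalues of finite multiplicity, accumulating only at $+\infty$. This is the pure point spectrum claimed.

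For the kernels, I would use $\form(f,f)=\duality{\OU^{\mathrm X} f}{f}$. If $\OU^{\mathrm X}f=0$, then
\[
\int_\Graph |f'|^2\,\mu(\dxShort x)=0 .
\]
Since $G>0$, this gives $f'=0$ almost everywhere on each edge. Because $f\in H^1_\mu(\Graph)$ is continuous and $\Graph$ is connected, $f$ is constant.

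Conversely, $\one$ lies in $L^2_\mu(\Graph)$, since $\mu$ is finite: there are finitely many edges and Gaussian tails. The function $\one$ also lies in $\domNeu{\OU}$, because it has vanishing derivatives and satisfies \eqref{eq:BC}, and $\OU\one=0$. Hence $\ker\OUNeu=\linSpan\{\one\}$.

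In the Dirichlet case, the constant $f$ must satisfy $f(\oVertex)=0$, so $f=0$ and $\ker\OUDir=\{0\}$. Since the spectrum is discrete, $0\notin\spec(\OUDir)$, and therefore $\OUDir$ is invertible with bounded (indeed compact) inverse.

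No step is a real obstacle. The only point needing care is that the compact embedding of the form domain, as opposed to the operator domain, suffices. This follows from the standard factorisation of the resolvent through the form domain, which is exactly how ``compactly embedded form domain implies compact resolvent'' is usually stated.
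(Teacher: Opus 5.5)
Your proposal is correct and follows the paper's argument. Compact resolvent comes from the compact embedding of the form domain, the kernel is identified because vanishing of the form forces $f'=0$ edgewise and hence $f$ constant, and invertibility of $\OUDir$ follows from its trivial kernel together with discreteness of the spectrum. You also spell out details the paper leaves implicit, namely the factorisation of the resolvent through the form domain and the check that $\one\in\domNeu{\OU}$ with $\OU\one=0$.
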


\begin{proof}
    By the standard correspondence between closed forms and self-adjoint operators and Proposition~\ref{prop:form-operator}, compactness of the form-domain embedding (Proposition~\ref{prop:form-properties}) implies compact resolvent and pure point spectrum of both the operators.

    Now, observe that the constant functions lie in $L_{\mu}^2(\Graph)$ and hence in $\ker \OUNeu$; and indeed, it spans it as any function that annihilates the quadratic form must be constant. 
    On the other hand, as $\domDir{\OU}$ contains no non-trivial constant function, $\OUDir$ must be invertible.
\end{proof}

\subsubsection{Semigroup properties and invariant measure}

Proposition~\ref{prop:form-operator} and the standard theory of forms ensure that the self-adjoint operators $-\OUDir$ and $-\OUNeu$ generate $C_0$-semigroups on $L^2_\mu(\Graph)$. In the following,  we derive the corresponding semigroup properties.

\begin{proposition}
    \label{prop:semigroups}
    Let $\SemiDir$ and $\SemiNeu$ denote the $C_0$-semigroups on $L^2_\mu(\Graph)$ generated by the operators associated with $\formDir$ and $\formNeu$, respectively. Then the following assertions hold.
    
    \begin{enumerate}[\upshape (a)]
        \item \label{prop:semigroups:itm:analytic}
        Both semigroups are contractive, analytic, and compact on $L^2_\mu(\Graph)$.
    
        \item \label{prop:semigroups:itm:markov}
        The semigroup $\SemiDir$ is sub-Markovian, whereas $\SemiNeu$ is Markovian.

        \item \label{prop:semigroups:itm:strong-Feller}
        Both semigroups are strong Feller.
    
        \item \label{prop:semigroups:itm:lp}
        Both semigroups admit consistent extensions to contractive $C_0$-semigroups on $L^p_\mu(\Graph)$ for all $1\le p<\infty$.
    
        \item \label{prop:semigroups:itm:irreducible}
        The Neumann semigroup is always irreducible while the Dirichlet semigroup is irreducible if $\deg{\oVertex}=1$.
    
        \item \label{prop:semigroups:itm:domination}
        The Dirichlet semigroup is dominated by the Neumann semigroup, i.e.,
        \[
            0\le e^{-t\OUDir}f \le e^{-t\OUNeu}f
            \quad \text{for all } t\ge 0 \text{ and } 0 \le f\in L^2_\mu(\Graph).
        \]
    
        \item \label{prop:semigroups:itm:transient-recurrent}
        The Dirichlet form $\formDir$ is transient, whereas the Neumann form $\formNeu$ is recurrent.
    \end{enumerate}
\end{proposition}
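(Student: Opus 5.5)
The plan is to derive every assertion from the abstract theory of closed symmetric forms and Dirichlet forms, using Proposition~\ref{prop:form-properties}, Proposition~\ref{prop:form-operator} and Corollary~\ref{cor:compact-resolvent}.

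\emph{Parts (a) and (d).} For (a), both operators are self-adjoint and non-negative. Hence the spectral theorem gives contractivity and analyticity of angle $\pi/2$. Compactness of $e^{-t\OU^{\mathrm X}}$ for $t>0$ follows from the compact resolvent in Corollary~\ref{cor:compact-resolvent}, because analytic semigroups with compact resolvent are immediately compact. For (d), the Beurling--Deny criteria apply since both forms are Dirichlet forms (Proposition~\ref{prop:form-properties}). They give positivity and $L^\infty$-contractivity, i.e.\ the sub-Markov property. Symmetry then yields $L^1$-contractivity by duality, and Riesz--Thorin interpolation yields contractive consistent extensions on $L^p_\mu$. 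Strong continuity for $1\le p<\infty$ follows from weak continuity on the dense set $L^2\cap L^p$, or from interpolation together with uniform boundedness.

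\emph{Part (b).} The Markov property of the Neumann semigroup follows from $\one\in\domNeu{\OU}$ and $\OUNeu\one=0$, so $e^{-t\OUNeu}\one=\one$. Note that $\one\in L^2_\mu$ because $\mu$ is finite.

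\emph{Part (e).} I would use the Ouhabaz criterion: the semigroup is irreducible iff the only closed ideals $L^2_\mu(S)$ that are invariant are trivial. Invariance of $L^2_\mu(S)$ is equivalent to $\one_S f\in\dom{\form}$ and $\form(\one_S f,\one_S f)\le \form(f,f)$ for all $f\in\dom{\form}$. Take $f$ strictly positive and locally constant near a given point; for the Dirichlet form, take $f$ positive away from $\oVertex$. Then $\one_S f\in H^1_\mu$ forces $\one_S f$ to be continuous, hence $S$ is, up to null sets, open and closed in $\Graph$ in the Neumann case. Since $\Graph$ is connected, $S$ is trivial. In the Dirichlet case the same argument runs on $\Graph\setminus\{\oVertex\}$. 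This set is connected exactly when $\deg\oVertex=1$, which explains the hypothesis.

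\emph{Part (f).} Domination follows from Ouhabaz's domination criterion. The conditions are that $\domDir{\form}$ is an ideal of $\domNeu{\form}$ and that $\form(f,g)\ge 0$ for $0\le f,g\in\domDir{\form}$. Both are immediate: $H^1_{0,\mu}$ is an ideal because $0\le |g|\le |f|$ with $f(\oVertex)=0$ forces $g(\oVertex)=0$, and the form is local.

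\emph{Part (g).} For recurrence, $\one\in H^1_\mu$ with $\form(\one,\one)=0$, so $\formNeu$ is recurrent. For transience, I would show that $\formDir$ is a norm-like form via a Poincaré/Hardy inequality $\int |f|\,\dxShort\mu\le C\form(f,f)^{1/2}$ on $H^1_{0,\mu}$. This follows from the invertibility of $\OUDir$ (Corollary~\ref{cor:compact-resolvent}), since the spectral gap gives $\|f\|_{L^2_\mu}^2\le \lambda_1^{-1}\cdot 2\form(f,f)$, and finiteness of $\mu$ bounds the $L^1$ norm. Equivalently, $\int_0^\infty e^{-t\OUDir}f\dx t$ converges for $f\ge 0$ by the exponential decay $e^{-\lambda_1 t}$.

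\emph{Part (c).} The strong Feller property (c) is the main obstacle: $e^{-t\OU^{\mathrm X}}$ must map bounded measurable functions into continuous ones. My plan is to use ultracontractivity-type smoothing through analyticity. For $f\in L^\infty\subset L^2_\mu$, $e^{-t\OU}f\in\dom{\OU^{\mathrm X}}\subset\widetilde{H^2_\mu}\cap \mathrm C(\Graph)$, which is continuous since $H^2_\mu$ functions on edges are $C^1$ and continuity holds at vertices. Continuity on the whole of $\Graph$ is only a local statement, so this suffices. Alternatively, one can invoke the existence of a continuous heat kernel, which holds for regular strongly local Dirichlet forms on one-dimensional spaces by local Sobolev embedding.
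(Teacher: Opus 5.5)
Your proposal follows the paper's proof part by part. It uses form theory and the Beurling--Deny criteria for (a), (b), (d), and Ouhabaz's invariance criterion, tested with $\one$ or with a function positive on $\Graph\setminus\{\oVertex\}$, for (e). For (f) it uses the ideal property plus Ouhabaz's domination criterion. For (g) it uses the spectral gap of $\OUDir$ with $\mu(\Graph)<\infty$ for transience, and $\one$ with zero energy for recurrence. For (c) it uses analyticity, which maps $L^\infty_\mu(\Graph)\subset L^2_\mu(\Graph)$ into the operator domain, and that domain consists of continuous functions.

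One step, however, is false as written. In (f) you take the second hypothesis of the domination criterion to be $\form(f,g)\ge 0$ for all $0\le f,g\in\domDir{\form}$, and call it immediate by locality. This fails. Take two non-negative hat functions on a single edge, supported away from $\oVertex$, whose supports overlap on an interval where the first decreases and the second increases. Then $f'g'<0$ on the overlap and $f'g'=0$ elsewhere, so $\form(f,g)<0$. The condition actually required in the positive-semigroup version of Ouhabaz's criterion (\cite[Corollary~2.22]{Ouh05}) is the comparison $\formDir(u,v)\ge\formNeu(u,v)$ for non-negative $u,v\in\domDir{\form}$. This holds with equality, because both forms are the same expression $\form$. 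Hence only the ideal property is needed, exactly as in the paper. Your observation that a function dominated by one vanishing at $\oVertex$ must itself vanish at $\oVertex$ is the essential point in verifying it.

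A minor omission is in (c). You also need the continuous representative to be bounded; this follows from the $L^\infty$-contractivity in (b)/(d) together with the full support of $\mu$. The paper additionally uses \cite[Proposition~1.7]{AB94} to represent $e^{-t\OU^{\mathrm X}}$ by kernels, so that $e^{-t\OU^{\mathrm X}}f(x)$ makes sense pointwise.
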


The distinction between transience and recurrence is consistent with the geometry induced by the Ornstein--Uhlenbeck drift: in the Neumann case, the drift points towards the root without loss of mass at the boundary, whereas in the Dirichlet case mass is absorbed at the root.

\begin{proof}[Proof of Proposition~\ref{prop:semigroups}]
    (a) By the standard theory of forms, both semigroups are contractive and analytic; see \cite[Proposition~1.51 and Theorem~1.52]{Ouh05}. Moreover, their compactness is a consequence of the compact embedding in Proposition~\ref{prop:form-properties}.

    (b) Since both $\formDir$ and $\formNeu$ are symmetric Dirichlet forms (Proposition~\ref{prop:form-properties}), the associated semigroups are sub-Markovian \cite[Theorem~2.25]{Ouh05}.
    Moreover, constant functions lie in $L_{\mu}^2(\calT)$, and hence in the null space of $-\OUNeu$.
    In particular, $e^{-t\OUNeu}\one = \one$ for all $t\ge 0$ and hence $\SemiNeu$ is Markovian.

    (c) Let $\mathrm X\in \{\Dirichlet,\Neumann\}$. As $\SemiX$ is analytic,
    \begin{equation}
        \label{eq:invariant-measure:feller-kernel}
        e^{-t \OU^{\mathrm X}} L^2_\mu(\Graph)\subseteq \domX{-\OU} \subseteq H^1_\mu(\Graph)\subseteq \mathrm C(\Graph) \cap L^\infty_{\mathrm{loc},\mu }(\Graph)
        \quad \text{for all }t>0.
    \end{equation}
    Therefore, $\SemiX$ consists of integral operators by \cite[Proposition~1.7]{AB94}. Hence, for each $x\in \Graph$ and $t>0$, the functional
    $
        f\mapsto e^{-t \OU^{\mathrm X}}f(x)
    $
    is given by integration against a finite Borel measure.

    Moreover, since $\mu(\Graph)<\infty$, we have $L^\infty_\mu(\Graph)\subset L^2_\mu(\Graph)$. Thus~\eqref{eq:invariant-measure:feller-kernel} gives continuity of $e^{-t\OU^{\mathrm X}}f$ for $f\in L^\infty_\mu(\Graph)$. The (sub-)Markovian property gives
    \[
        \norm{e^{-t\OU^{\mathrm X}}f}_{L^\infty_\mu}
        \le
        \norm{f}_{L^\infty_\mu}
    \]
    Since $\mu$ has full support, the continuous representative is bounded
    pointwise by the same constant. 
    Hence $e^{-t \OU^{\mathrm X}} L^\infty_\mu(\Graph)\subseteq \mathrm C_{\mathrm b}(\Graph)$, the space of bounded continuous functions on $\Graph$, i.e., $\SemiX$ is strong Feller.

    (d) Since the (contraction) semigroups are sub-Markovian, they are in particular $L^\infty$-contractive. Symmetry of the forms
    therefore ensures that both semigroups extend to contractive $C_0$-semigroups on $L^p_\mu(\Graph)$ for all $1\le p<\infty$ by standard interpolation arguments, see, for example \cite[Pages~56-57]{Ouh05}.

    (e) We use condition~(2) of \cite[Corollary~2.11]{Ouh05}. Let $\mathrm X \in \{\Dirichlet, \Neumann\}$ and let $\Omega\subseteq\Graph$ be measurable such that
    $
        \one_\Omega \domX{\form}\subset \domX{\form}.
    $
    
    If $\mathrm X=\Neumann$, then
    $\one_{\Omega}=\one_{\Omega}\cdot \one_\Graph \in H^1_{\mu}(\Graph)$. By continuity of $H^1_{\mu}(\Graph)$ functions, $\one_{\Omega}$ must be constant on the connected graph $\Graph$. It follows that either $\mu(\Omega)=0$ or $\mu(\Graph\setminus\Omega)=0$. 
    Thus the Neumann semigroup is irreducible.
    
    On the other hand, if $\mathrm X= \Dirichlet$, then we choose
    $h\in\domDir{\form}$ such that $h>0$ on $\Graph\setminus\{\oVertex\}$. 
    By assumption, $\one_\Omega h\in\domDir{\form}\subset H^1_\mu(\Graph)$, and is hence continuous. Since $h$ is continuous
    and strictly positive on $\Graph\setminus\{\oVertex\}$, it follows that $\one_\Omega$ is constant on the set $\Graph\setminus\{\oVertex\}$. 
    If $\deg{\oVertex}=1$, then $\Graph\setminus\{\oVertex\}$ is connected and it follows that $\mu(\Omega)=0$ or $\mu(\Graph\setminus\Omega)=0$. Consequently, the Dirichlet semigroup is irreducible.

    (f) As $\domDir{\form}$ is an ideal of $\domNeu{\form}$, the asserted dominance is a consequence of \cite[Corollary~2.22]{Ouh05}.

    (g) Since $\OUDir$ is invertible (Corollary~\ref{cor:compact-resolvent}), its first eigenvalue satisfies $\lambda_1^{\Dirichlet}>0$, and hence
    $
        \formDir(f,f)\ge 
        \lambda_1^{\Dirichlet}\norm{f}_{L^2_\mu(\Graph)}^2
    $
    for all $f\in \domDir{\form}$.
    As $\mu(\Graph)<\infty$, Cauchy--Schwarz implies
    \[
        \int_\Graph \modulus{f}\mu(\dxShort x)
        \le \mu(\Graph)^{1/2}\norm{f}_{L^2_\mu(\Graph)}
        \le \mu(\Graph)^{1/2}(\lambda_1^{\Dirichlet})^{-1/2}\formDir(f,f)^{1/2}.
    \]
    Thus $g\coloneqq\sqrt{\lambda_1^{\Dirichlet}\mu(\Graph)^{-1}}\one$ is a bounded integrable reference function, so $\formDir$ is transient by \cite[Theorem~1.5.1]{FukOshTak10}.
    On the other hand, the extended Dirichlet space of $\formNeu$
    contains the function $\one$ that satisfies $\formNeu(\one,\one)=0$. Hence $\formNeu$ is recurrent by \cite[Theorem~1.6.3]{FukOshTak10}.
\end{proof}

We conclude the section by identifying the invariant measure for the Neumann semigroup and describing its long-time behaviour.

\begin{theorem}
    \label{thm:invariant-measure}
    The measure $\mu$ given by \eqref{eq:gaussian} is the unique (up to a multiplicative constant) invariant measure for $\SemiNeu$. Moreover,
    \begin{equation}
        \label{eq:Doobs}
        \lim_{t\to \infty}e^{-t\OUNeu} f(x)=\frac{1}{\mu(\Graph)}\int_{\Graph} f\mu(\dxShort x)
        \quad \text{for all }f\in L^\infty_{\mu}(\Graph) \text{ and }x\in \Graph.
    \end{equation}
\end{theorem}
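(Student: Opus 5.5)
Invariance of $\mu$ follows from self-adjointness and Markovianity. For $f\in L^2_\mu(\Graph)$ and $t\ge0$, Proposition~\ref{prop:semigroups}\,(b) gives $e^{-t\OUNeu}\one=\one$, and self-adjointness of $\OUNeu$ (Proposition~\ref{prop:form-operator}) yields
\[
    \int_\Graph e^{-t\OUNeu}f\,\mu(\dxShort x)=\duality{e^{-t\OUNeu}f}{\one}_{L^2_\mu(\Graph)}=\duality{f}{e^{-t\OUNeu}\one}_{L^2_\mu(\Graph)}=\int_\Graph f\,\mu(\dxShort x).
\]
This extends to $L^1_\mu$ by density and the $L^1$-contractivity from Proposition~\ref{prop:semigroups}\,(d). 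Hence $\mu$ is invariant.

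For uniqueness, I would work with invariant measures $\nu$ absolutely continuous with respect to $\mu$ with density $\rho\in L^1_\mu(\Graph)$; this is the standard setting, since the semigroup acts on $\mu$-classes. Invariance means $\int e^{-t\OUNeu}f\,\rho\,\dxShort\mu=\int f\rho\,\dxShort\mu$ for all bounded $f$, so $\rho$ is a fixed point of the dual (pre-adjoint) semigroup on $L^1_\mu$. By symmetry this is the $L^1$-extension of the same semigroup. Since the semigroup is irreducible (Proposition~\ref{prop:semigroups}\,(e)) and has compact resolvent with $\ker\OUNeu=\linSpan\{\one\}$ (Corollary~\ref{cor:compact-resolvent}), every fixed point is a multiple of $\one$. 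To justify this in $L^1$, I would use that the fixed space is consistent across $p$: for $t>0$ the semigroup maps $L^1_\mu$ into $L^2_\mu$, since it has a kernel and ultracontractivity is expected from the compact resolvent. Failing that, I would approximate $\rho$ by truncations, which is where some care is needed. Then $\rho\in\Fix\subseteq\ker\OUNeu=\linSpan\{\one\}$. Alternatively, for a general (possibly singular) invariant measure, the strong Feller property in (c) together with irreducibility gives that every invariant measure is equivalent to $\mu$ by Doob's theorem, which reduces to the absolutely continuous case.

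For \eqref{eq:Doobs}, I would use the spectral theorem. $\OUNeu\ge0$ is self-adjoint with compact resolvent and simple eigenvalue $0$ with eigenvector $\one$, so there is a spectral gap $\lambda_2>0$. Writing $Pf=\mu(\Graph)^{-1}\int f\,\dxShort\mu$, we get $\norm{e^{-t\OUNeu}f-Pf}_{L^2_\mu}\le e^{-\lambda_2 t}\norm{f}_{L^2_\mu}$, which is convergence in $L^2$. To upgrade to pointwise convergence at every $x$, I would write $e^{-t\OUNeu}f=e^{-\OUNeu}\bigl(e^{-(t-1)\OUNeu}f\bigr)$ and use that $e^{-\OUNeu}$ maps $L^2_\mu$ continuously into $\domNeu{\OU}\subseteq H^1_\mu(\Graph)$. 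Point evaluation at $x$ is continuous on $H^1_\mu$, because on a bounded neighbourhood $G$ is bounded below and the one-dimensional Sobolev embedding applies. Then $\modulus{e^{-t\OUNeu}f(x)-Pf}\le C_x e^{-\lambda_2(t-1)}\norm{f}_{L^2_\mu}$, since $e^{-\OUNeu}Pf=Pf$. This holds for $f\in L^\infty_\mu\subseteq L^2_\mu$.

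I expect the main obstacle to be the uniqueness statement, because the class of invariant measures (absolutely continuous or general) must be handled correctly. The convergence part is routine given the spectral gap.
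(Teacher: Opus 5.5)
Your invariance computation is fine; it is what the Vogt--Voigt criterion cited in the paper amounts to. Your proof of \eqref{eq:Doobs} is correct but takes a genuinely different route. The paper invokes Gerlach--Nittka's convergence theorem for strong Feller semigroups with a strictly positive finite invariant measure and one-dimensional fixed space. You instead combine the spectral gap $\lambda_2^{\Neumann}>0$ with the smoothing $e^{-\OUNeu}\colon L^2_\mu(\Graph)\to H^1_\mu(\Graph)$ and continuity of point evaluations on $H^1_\mu(\Graph)$. Your argument is more elementary, works for all $f\in L^2_\mu(\Graph)$, and gives the exponential rate $e^{-\lambda_2^{\Neumann}(t-1)}$. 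The paper's argument would survive without a spectral gap.

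The gap is in the uniqueness step. You claim that $e^{-t\OUNeu}$ maps $L^1_\mu(\Graph)$ into $L^2_\mu(\Graph)$, with ultracontractivity ``expected from the compact resolvent''. This is false. Compact resolvent does not imply ultracontractivity. After the substitution $y=\sqrt2\,x$, the operator on each half-line is the classical Ornstein--Uhlenbeck operator, which is hypercontractive but not ultracontractive. By Nelson's theorem it is not bounded from $L^1$ into $L^q$ for any $q>1$ and any $t>0$; the diagonal of the Mehler kernel grows like $e^{c_t y^2}$ along the half-line. So the $L^1$-fixed space cannot be identified this way. Your truncation fallback is announced but not carried out. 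The Doob alternative needs topological irreducibility ($p_t(x,U)>0$ for all $x$ and all non-empty open $U$), which is not what Proposition~\ref{prop:semigroups}\ref{prop:semigroups:itm:irreducible} states. Irreducibility also plays no role in identifying the $L^2$-fixed space, which is simply $\ker\OUNeu$.

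The cleanest repair is to prove \eqref{eq:Doobs} first and deduce uniqueness from it. Let $\sigma$ be a finite invariant measure. For every bounded Borel $f$ you have $\int f\,\mathrm{d}\sigma=\int e^{-t\OUNeu}f\,\mathrm{d}\sigma$. Moreover $\modulus{e^{-t\OUNeu}f(x)}\le\norm{f}_{L^\infty_\mu}$ and $e^{-t\OUNeu}f(x)\to\mu(\Graph)^{-1}\int f\,\mathrm{d}\mu$ for \emph{every} $x$. Dominated convergence then gives $\sigma=\sigma(\Graph)\mu(\Graph)^{-1}\mu$. This needs no absolute-continuity assumption, no $L^1$ fixed spaces and no Doob theorem.

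Alternatively, the truncation does work. Let $0\le\rho\in L^1_\mu(\Graph)$ be fixed. Positivity and $e^{-t\OUNeu}\one=\one$ give $e^{-t\OUNeu}(\rho\wedge c)\le\rho\wedge c$. Integrating against the invariant measure $\mu$ forces equality, so $\rho\wedge c\in L^2_\mu(\Graph)$ is fixed and hence constant for every $c>0$. The paper itself simply invokes a uniqueness result for invariant probability measures \cite[Proposition~9.1.15]{Lorenzi2017}.
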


\begin{proof}
    As $\mu(\Graph)<\infty, \one \in \domNeu{\form}$, and $\form(f,\one)=0$ for all $f\in \domNeu{\form}$, invariance of measure is guaranteed by \cite[Remark~5.1]{VogVoi03}.
    
    Recall from Proposition~\ref{prop:semigroups}\ref{prop:semigroups:itm:strong-Feller} that $\SemiNeu$ is strong Feller. More precisely, using the kernel
    representatives of the operators $e^{-t\OUNeu}$, $t>0$, the semigroup
    extends to a positive bounded strong Feller semigroup on
    $\mathrm B_b(\Graph)$, the space of bounded Borel measurable functions on $\Graph$.
    The measure $\mu$ is finite, invariant, and strictly
    positive. Furthermore, by Corollary~\ref{cor:compact-resolvent}, the fixed space of the semigroup $\SemiNeu$ is given by $\Fix \left(\SemiNeu\right) = \linSpan\{\one\}$. Thus, all the assumptions of \cite[Theorem~4.6]{Gerlach-Nittka} are fulfilled.
    Consequently, there exists $e\in \Fix \left(\SemiNeu\right) = \linSpan\{\one\}$ and
    \[
        \lim_{t\to\infty} e^{-t \OUNeu}f
        =
        \left(\int_\Graph f\mu(\dxShort x)\right)e
        \quad \text{for all } f\in L^\infty_{\mu}(\Graph) \text{ and } x\in \Graph.
    \]
    In particular, taking $e=c\one$ and $f=\one$, we obtain that $c=\mu(\Graph)^{-1}$, and \eqref{eq:Doobs} follows.

    Finally, let $\sigma$ be a finite invariant measure for $\SemiNeu$. Uniqueness of invariant probability measures \cite[Proposition~9.1.15]{Lorenzi2017} ensures  $\mu(\Graph)^{-1} \mu=\sigma(\Graph)^{-1} \sigma$ and in turn, 
    $\sigma=\mu(\Graph)^{-1} \sigma(\Graph) \mu$, ensuring uniqueness (up to a positive multiplicative constant).
\end{proof}

\begin{remark}
    Note that,
    by the same reasoning as in the proof above, both the Dirichlet and Neumann realisation of $\form$
    are associated with a Green function (integral kernels of the associated operator on $L^2_\mu(\Graph)$) of class $L^\infty(\Graph\times \Graph)$. 
    Because by Corollary~\ref{cor:compact-resolvent}, both realisations have compact resolvent, the spectrum is equal on $L^p_\mu(\Graph)$ for each $1<p<\infty$. 
\end{remark}

\subsubsection{Leading eigenvalue asymptotics}

By Corollary~\ref{cor:compact-resolvent}, the operators $\OUDir$ and $\OUNeu$ have compact resolvent. Hence their spectra are purely discrete and consist of real eigenvalues of finite multiplicity accumulating only at $+\infty$.

\begin{proposition}
    \label{prop:linear-asymptotics-general}
    Enumerate the eigenvalues of $\OUDir$ and $\OUNeu$, repeated according to multiplicity, as
    \[
        0<\lambda_1^{\Dirichlet}\le \lambda_2^{\Dirichlet}\le \cdots \nearrow \infty
        \quad \text{and}\quad
        0=\lambda_1^{\Neumann}<\lambda_2^{\Neumann}\le \lambda_3^{\Neumann}\le \cdots \nearrow \infty,
    \]
    respectively.
    Then
    \[
        \lambda_n^{\Dirichlet}\sim \frac{2n}{\halflines},
        \qquad
        \lambda_n^{\Neumann}\sim \frac{2n}{\halflines},
        \qquad n\to\infty;
    \]
    where $\halflines=\cardinality{\EdgeSet_\infty}$ is the total number of half-lines attached to the compact part $\widetilde \Graph$.
    In particular,
    $
        \lambda_n^{\Dirichlet}=\calO(n),
    $
    and
    $
        \lambda_n^{\Neumann}=\calO(n).
    $
\end{proposition}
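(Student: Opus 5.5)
Our plan is Dirichlet--Neumann bracketing combined with the one-dimensional half-line asymptotics collected in Appendix~\ref{appendix:half-line}. We cut $\Graph$ at the leaves $\vVertex\in\LeafSet$, i.e.\ at the starting points of the half-lines in $\EdgeSet_\infty$. This splits $\Graph$ into the compact core $\widetilde\Graph$ and $\halflines$ half-lines $[a_j,\infty)$, where $a_j=\modulus{\vVertex_j}$ is the distance from the root of the leaf to which the $j$-th half-line is attached. On each piece the form $\form$ restricts to the same weighted Dirichlet integral. We add extra Dirichlet conditions at the cut points to obtain a form $\form_+$, and decouple with Neumann (free) conditions to obtain a form $\form_-$. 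Then
\[
\domDir{\form_+}\subseteq \domDir{\form}\subseteq \domNeu{\form}\subseteq \dom{\form_-},
\]
with $\form$ coinciding with $\form_\pm$ on the smaller domains. The min-max principle therefore gives
\[
\lambda_n(\form_-)\le \lambda_n^{\Neumann}, \qquad \lambda_n^{\Dirichlet}\le \lambda_n(\form_+),
\]
and also $\lambda_n^{\Neumann}\le\lambda_n^{\Dirichlet}$. So it suffices to show $\lambda_n(\form_\pm)\sim 2n/\halflines$.

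The operators of $\form_\pm$ are direct sums. One summand is the OU operator on the compact core $\widetilde\Graph$, with the corresponding vertex conditions. The other $\halflines$ summands are the one-dimensional operators $-\tfrac12 u''+xu'$ on $[a_j,\infty)$ in $L^2(G\,\dxShort x)$, with Dirichlet or Neumann condition at $a_j$. On the compact core the weight $G$ is bounded above and below, so $L^2_\mu$ and $L^2$ norms are equivalent. By min-max and the Weyl law for graph Laplacians, its counting function is therefore $N_{\mathrm{core}}(\lambda)=\calO(\sqrt\lambda)$, which is $o(\lambda)$.

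For the half-lines we invoke the appendix result: the $n$-th eigenvalue of the Dirichlet or Neumann OU operator on $[a,\infty)$ is $2n+o(n)$, i.e.\ its counting function is $N_a(\lambda)=\lambda/2+o(\lambda)$. If the appendix only treats $a=0$, we would compare $[a,\infty)$ with $[0,\infty)$. One route is to conjugate via $u=e^{x^2/2}v$, which turns the operator into the Schrödinger operator $\tfrac12(-v''+(x^2-1)v)$ on $L^2(\dxShort x)$. Its counting function on $[a,\infty)$ differs from the one on $[0,\infty)$ by at most the count on the bounded interval $[0,a]$, which is $\calO(\sqrt\lambda)$, again via bracketing. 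This is a harmonic-oscillator type bound whose leading term gives $\lambda/2$ per half-line. Summing the counting functions gives
\[
N_\pm(\lambda)=\halflines\,\frac{\lambda}{2}+o(\lambda),
\]
and inverting yields $\lambda_n(\form_\pm)\sim 2n/\halflines$. The squeeze then gives the claimed asymptotics for both $\lambda_n^{\Dirichlet}$ and $\lambda_n^{\Neumann}$, and hence $\calO(n)$.

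The main obstacle is the half-line step: establishing a counting function of $\lambda/2+o(\lambda)$ uniformly in the boundary condition and in the starting point $a_j$. Here the $o(\lambda)$ error must be controlled honestly. The ground-state transform is the tool we would try first, since it reduces the problem to classical harmonic oscillator asymptotics on half-lines, where the odd and even Hermite functions give exactly $\lambda/2$ eigenvalues below $\lambda$.
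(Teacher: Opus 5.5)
Your proof is correct, but it takes a different route from the paper's.

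\textbf{The paper's route.} The paper adds Dirichlet conditions at the leaves only, which gives $\widehat{\OU}^{\mathrm X}$. Its form domain has codimension $m\le\cardinality{\LeafSet}$ in $\domX{\form}$. A min--max/rank--nullity argument then gives the interlacing $\lambda_n^{\mathrm X}\le\lambda_n(\widehat{\OU}^{\mathrm X})\le\lambda_{n+m}^{\mathrm X}$. A fixed index shift does not affect linear asymptotics. So the paper needs only the Dirichlet half-line asymptotics and the Weyl bound for the core, and it treats the two realisations separately.

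\textbf{Your route.} You use two-sided Dirichlet--Neumann bracketing. The inclusions $\dom{\form_+}\subseteq\domDir{\form}\subseteq\domNeu{\form}\subseteq\dom{\form_-}$ give one sandwich, $\lambda_n(\form_-)\le\lambda_n^{\Neumann}\le\lambda_n^{\Dirichlet}\le\lambda_n(\form_+)$. This needs no index-shift argument and handles both realisations at once.

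\textbf{What it costs.} In exchange you need two extra inputs:
\begin{itemize}
\item the Neumann half-line asymptotics;
\item the Weyl bound for the core with free conditions at the cut points and at the root.
\end{itemize}
Theorem~\ref{thm:OU-half-line-asymptotics} supplies the first for arbitrary $a\ge 0$; its proof is actually written out for the Neumann case. The second follows by the same comparison with the unweighted graph Laplacian that you describe.

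\textbf{Side remarks.} Your fallback via the ground-state transform is therefore not needed. If you did use it, note that $u'(a)=0$ becomes the Robin condition $v'(a)+av(a)=0$. The comparison with $[0,\infty)$ is cleaner if you bracket directly in the weighted form, splitting at $a$. The uniformity you flag as the main obstacle is also not an issue: there are finitely many half-lines and only two boundary conditions.
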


To describe eigenvalue asymptotics, we briefly use counting functions.
If $T$ is a self-adjoint operator with discrete spectrum, whose
eigenvalues are ordered increasingly and repeated according to
multiplicity, we denote by
\[
    N_T(\Lambda)
    \coloneqq
    \cardinality{\{n\in\bbN : \lambda_n(T)\le \Lambda\}}
\]
the number of eigenvalues (counted with multiplicity) not exceeding $\Lambda$.
We recall that $N_T$ is related to the eigenvalue sequence by
\[
    \lambda_n(T)\le \Lambda
    \quad\Longleftrightarrow\quad
    n \le N_T(\Lambda).
\]

\begin{proof}[Proof of Proposition~\ref{prop:linear-asymptotics-general}]
    We give the argument for $\OUDir$ and $\OUNeu$ simultaneously. Let
    $\widehat{\OU}^{\mathrm X}$, $\mathrm X\in\{\Dirichlet,\Neumann\}$, denote the
    operator obtained from $\OU^{\mathrm X}$ by imposing, in addition, Dirichlet
    conditions at all vertices of $\widetilde{\Graph}$ to which half-lines are
    attached. Thus the corresponding form domain is
    \[
        \widehat{\calV}^{\mathrm X}
        =
        \left\{
            f\in \domX{\form}:
            f(\vVertex)=0
            \text{ for every }\vVertex\in \LeafSet
        \right\}.
    \]
    This is a closed subspace of $\domX{\form}$ of finite codimension, bounded above
    by $\cardinality{\LeafSet}$.

    \emph{Step 1:
    We show that $\OU^{\mathrm X}$ and $\widehat{\OU}^{\mathrm X}$ have the same leading
    eigenvalue asymptotics}.
    Enumerate the eigenvalues of $\widehat{\OU}^{\mathrm X}$ (repeated according to multiplicity) as $\lambda_1(\widehat{\OU}^{\mathrm X})\le \lambda_2(\widehat{\OU}^{\mathrm X})\le \ldots$ and let $m$ be the codimension of
    $\widehat{\calV}^{\mathrm X}$ in $\domX{\form}$. Then
    \[
        \lambda_n^{\mathrm X}
        \le
        \lambda_n(\widehat{\OU}^{\mathrm X})
        \le
        \lambda_{n+m}^{\mathrm X}
        \quad 
        \text{for all }n\in\bbN.
    \]
    Indeed, the first inequality follows from the min--max
    principle because $\widehat{\calV}^{\mathrm X}\subset \domX{\form}$.
    To see the second inequality, let $R(\argument)$ denote the Rayleigh quotient corresponding to $(\form, \domX{\form})$. By the min--max principle,
    \[
        \lambda_n(\widehat{\OU}^{\mathrm X})
        =
        \inf_{\substack{M\subset \widehat{\calV}^{\mathrm X}\\ \dim M=n}}
        \sup_{0\neq f\in M}R(f),
    \qquad
    \text{and}
    \qquad
        \lambda_{n+m}^{\mathrm X}
        =
        \inf_{\substack{L\subset \domX{\form}\\ \dim L=n+m}}
        \sup_{0\neq f\in L}R(f).
    \]
    Fix an arbitrary subspace $L\subset\domX{\form}$ with $\dim L=n+m$
    and consider the quotient map
    \[
        \pi:\domX{\form}\to \domX{\form}/\widehat{\calV}^{\mathrm X}.
    \]
    Since $\widehat{\calV}^{\mathrm X}$ has co-dimension $m$, we have
    $
        \dim \pi(L)\le m.
    $
    By the rank--nullity theorem,
    \[
        \dim(L\cap\widehat{\calV}^{\mathrm X})
        =
        \dim L-\dim\pi(L)
        \ge
        (n+m)-m
        =
        n.
    \]
    As every $n$-dimensional subspace $M\subset L\cap\widehat{\calV}^{\mathrm X}$
    is admissible in the min--max formula for $\lambda_n(\widehat{\OU}^{\mathrm X})$,
    we obtain
    \[
        \lambda_n(\widehat{\OU}^{\mathrm X})
        \le
        \sup_{0\neq f\in M}R(f)
        \le
        \sup_{0\neq f\in L}R(f).
    \]
    Taking the infimum over all $(n+m)$-dimensional subspace $L\subset\domX{\form}$ yields
    \[
        \lambda_n(\widehat{\OU}^{\mathrm X})
        \le
        \inf_{\substack{L\subset \domX{\form}\\ \dim L=n+m}}
        \sup_{0\neq f\in L}R(f)
        =
        \lambda_{n+m}^{\mathrm X}.
    \]

    \emph{Step 2:
    We compute the eigenvalue asymptotics of $\widehat{\OU}^{\mathrm X}$}. 
    Note that the additional Dirichlet conditions at the attachment vertices decouple the compact
    core from the half-lines. For each $\vVertex\in\LeafSet$, denote by $\deg_\infty(\vVertex)$ the number of half-lines in $\EdgeSet_\infty$ attached to $\vVertex$.
    Then
    \[
        \widehat{\OU}^{\mathrm X}
        =
        B^{\mathrm X}
        \oplus
        \bigoplus_{\vVertex \in \LeafSet}
        \bigoplus_{j=1}^{\deg_\infty(\vVertex)}
        L_{\modulus{\vVertex}}^{\Dirichlet},
    \]
    where the compact part $B^{\Dirichlet}$ and $B^{\Neumann}$ are the
    operators acting as $\OU$ on each edge of $\widetilde{\Graph}$ with domains
    \[
        \left\{ f\in \widetilde{H_{\mu}^2}(\widetilde\Graph) : f(\vVertex)=0 \text{ for all }\vVertex\in\LeafSet \cup\{\oVertex\}\text{ and } f \text{ satisfies}~\eqref{eq:BC} \text{ for all } \vVertex\in\VertexSet \setminus (\{\oVertex\}\cup\LeafSet)\right\},
    \]
    and
    \[
        \left\{ f\in \widetilde{H_{\mu}^2}(\widetilde\Graph) : f(\vVertex)=0 \text{ for all }\vVertex\in\LeafSet\text{ and } f \text{ satisfies}~\eqref{eq:BC} \text{ for all } \vVertex\in\VertexSet \setminus \LeafSet\right\}
    \]
    respectively,
    and $L_{\modulus{\vVertex}  }^{\Dirichlet}$ is the Dirichlet
    Ornstein--Uhlenbeck operator on the half-line $\bigl[\modulus{\vVertex},\infty\bigr)$.

    The compact part is negligible in the leading asymptotics. Indeed, on the
    compact graph $\widetilde{\Graph}$ the Gaussian measure is comparable to
    Lebesgue measure, and the associated form is comparable with the standard
    metric-graph Laplacian form. Hence, it is known the eigenvalues of $B^{\mathrm X}$ grow
    quadratically, see \cite[Theorem~4.6]{Kur23},
    and in particular, its counting function satisfies
    \[
        N_{B^{\mathrm X}}(\Lambda)=\mathcal O(\sqrt{\Lambda}),
        \qquad
        \Lambda\to\infty.
    \]

    On the other hand, by Theorem~\ref{thm:OU-half-line-asymptotics}, each
    half-line Dirichlet Ornstein--Uhlenbeck operator satisfies
    \[
        \lambda_n(L_{\modulus{\vVertex}}^{\Dirichlet})\sim 2n,
        \qquad n\to\infty.
    \]
    So, by standard inversion between eigenvalue asymptotics and counting functions
    \[
        N_{L_{\modulus{\vVertex}}^{\Dirichlet}}(\Lambda)\sim \frac{\Lambda}{2},
        \qquad \Lambda\to\infty.
    \]
    Consequently, the counting function of the decoupled operator satisfies
    \[
        N_{\widehat{\OU}^{\mathrm X}}(\Lambda)
        =
        N_{B^{\mathrm X}}(\Lambda)
        +
        \sum_{\vVertex \in \LeafSet}\sum_{j=1}^{\deg_\infty(\vVertex)} N_{L_{\modulus{\vVertex}  }^{\Dirichlet}}(\Lambda)
        \sim
        \frac{\Lambda}2 \sum_{\vVertex \in \LeafSet} \deg_\infty(\vVertex) = \frac{\Lambda \halflines}2.
    \]
    Since the counting function is the asymptotic inverse of the ordered eigenvalue sequence, this gives
    \[
        \lambda_n(\widehat{\OU}^{\mathrm X})\sim \frac{2n}{\halflines},
        \qquad n\to\infty.
    \]

    \emph{Step 3}. 
    The index-shift comparison from Step~1 gives
    $
        \lambda_{n-m}(\widehat{\OU}^{\mathrm X})
        \le
        \lambda_n^{\mathrm X}
        \le
        \lambda_n(\widehat{\OU}^{\mathrm X})
    $
    for all $n>m$.
    Applying the asymptotic of $\lambda_n(\widehat{\OU}^{\mathrm X})$ from Step~2 on both sides,
    we obtain
    $
        \lambda_n^{\mathrm X}
        \sim
        \frac{2n}{\halflines}
    $.
\end{proof}

\begin{remarks}
    \leavevmode
    \begin{enumerate}[\upshape (a)]
        \item As a consequence of the linear growth of the eigenvalues in Proposition~\ref{prop:linear-asymptotics-general}, the
        resolvent of both realisations $\OUDir$ and $\OUNeu$ are of Hilbert--Schmidt
        class, and the associated semigroups are of trace class. 
        
        \item The proof of Proposition~\ref{prop:linear-asymptotics-general} shows that the
        leading eigenvalue asymptotics are determined entirely by the behaviour of
        the Ornstein--Uhlenbeck operator on the non-compact half-lines, with Dirichlet
        conditions at their endpoints. In contrast, the compact part
        $\widetilde{\Graph}$ contributes only lower-order terms, since its spectrum
        grows quadratically.
    
        In particular, the precise choice of vertex conditions on
        $\widetilde{\Graph}$ is largely irrelevant for the leading asymptotics, as
        long as the resulting realisation (with additional Dirichlet conditions at
        the attachment vertices) is self-adjoint. Such self-adjoint realisations can
        be characterised in terms of (possibly nonlocal) transmission conditions at
        the vertices; see, e.g., \cite[Theorem~1.4.4]{BerKuc13} or
        \cite[Section~6.5.1]{Mug14}.
    
        Consequently, the conclusion of
        Proposition~\ref{prop:linear-asymptotics-general} extends verbatim to more
        general vertex conditions on $\widetilde{\Graph}$, for instance to
        $\delta$-type conditions of the form
        \[
            \begin{aligned}
                f_{j}(\vVertex) = f_k(\vVertex)&
                \quad  &\text{for }\eEdge_j, \eEdge_k \sim \vVertex,\\
                \sum_{\eEdge_k\sim \vVertex}
                \frac{\partial f_k}{\partial n}(\vVertex)
                =
                \delta_{\vVertex} f(\vVertex)&
            \end{aligned}
        \]
        at all vertices $\vVertex\in\VertexSet$,
        for an arbitrary family of real parameters
        $(\delta_{\vVertex})_{\vVertex\in\VertexSet}$.
    \end{enumerate}
\end{remarks}

While Proposition~\ref{prop:linear-asymptotics-general} provides a coarse spectral estimate
valid for arbitrary rooted trees, it does not exploit any additional
geometric structure. In the case of regular rooted trees, a much more
refined analysis becomes possible, which we develop in Section~\ref{sec:regular-trees}.

\subsection{A Wentzell-type realisation}
    \label{sec:wentzell-realisation}

For completeness, we briefly discuss a further realisation of the Ornstein--Uhlenbeck operator on $\Graph$, obtained by coupling the dynamics on the edges with a dynamic boundary condition at the root $\oVertex$. In what follows, $\bbK$ denotes the scalar field.

Inspired by~\cite{MugnoloRomanelli2007} we consider the Hilbert space 
\[
	\calH\coloneqq L^2_\mu(\Graph)\times \bbK
\]
and identify $H^1_\mu(\Graph)$ in a canonical way with a closed subset of $\calH$:
\[
    H^1_\mu(\Graph)\equiv \calV\coloneqq \{(f,\psi)\in \calH : f\in H^1_\mu(\Graph)\text{ and }f(\oVertex)=\psi\}.
\]
Thus, $\calV$ encodes continuity at the root by identifying the trace $u(\oVertex)$ with the second component $\psi$.

We are thus in the setting described in \cite{MugnoloRomanelli2007}, \cite[Section~2]{Mugnolo2010} (up to the inessential detail that Gaussian -- rather than Lebesgue -- measure is endowing each half-line):~ we may so deduce from~\cite[Theorem~2.3]{Mugnolo2010} that the self-adjoint operator $\OUWen$  on $\calH$ formally associated with $(\form,\calV)$ is such that $-\OUWen$ generates an analytic, contractive, compact $C_0$-semigroup on $\calH$.

\begin{remark}
    At first, $\OUWen$ is only weakly defined as the operator associated with $(\form,\calV)$, but it can be shown as in Section~\ref{sec:dirichlet-neumann-realisation} (see also~\cite[Lemma~3.3]{MugnoloRomanelli2007}) that $\OUWen$ admits the following explicit description:
    \begin{align*}
    	\dom{\OUWen}
        &=
        \left\{
        \begin{pmatrix}
            f\\ \psi
        \end{pmatrix}
        \in \calV
        :
        f\in \widetilde{H^2_\mu}(\Graph) \text{ and } f \text{ satisfies}~\eqref{eq:BC} \text{ for all } \vVertex\in\VertexSet \setminus \{\oVertex\}
    \right\},\\
    	\OUWen \begin{pmatrix}
            f\\ \psi
        \end{pmatrix}
        &=
        \begin{pmatrix}
            \OU f\\[0.3em]
            \displaystyle-\frac{1}{\sqrt \pi}\sum_{\eEdge\sim \oVertex}
        \frac{\partial f_{\eEdge}}{\partial n}(\oVertex)
        \end{pmatrix}
    \end{align*}
    The coefficient in the second component is the boundary coefficient produced by the weighted integration by parts. Indeed, at the root all incident edges are parametrised away from $\oVertex$, and therefore the root contribution is
    \[
        -\frac12 G(\oVertex)
        \sum_{\eEdge\sim\oVertex}
        \frac{\partial f_{\eEdge}}{\partial n}(\oVertex)
        \overline{h(\oVertex)}.
    \]
    Since $G(\oVertex)=G(0)=2/\sqrt\pi$, this coefficient equals
    $-G(\oVertex)/2=-1/\sqrt\pi$. In the Dirichlet case this term
    vanishes because the test functions vanish at $\oVertex$, while in
    the Neumann case it yields only a homogeneous Kirchhoff condition and
    the non-zero scalar factor can be divided out. In the Wentzell
    realisation, however, the boundary value is part of the Hilbert space
    $L^2_\mu(\Graph)\times\bbK$, so the boundary contribution becomes the
    $\bbK$-component of the operator and the coefficient remains.
\end{remark}

It can be proved precisely as \cite[Theorem~3.4]{Mugnolo2010} that the semigroup generated by $-\OUWen$ leaves invariant the order interval
\[
    [\zero_{\calH},\one_{\calH}]=\left\{(f,\psi)\in \mathcal H:0\le f(x)\le 1\text{ for a.e. }x\in \Graph\text{ and }0\le \psi\le 1\right\}.
\]
Finally, observe that $\one_{\calH}\in \dom{\OUWen}$ and that $\OUWen \one_{\calH}=0$. In particular, $e^{-t\OUWen} \one_{\calH} =\one_{\calH}$ for all $t>0$.
We can finally summarize our findings and state the following.

\begin{theorem}
	The operator $-\OUWen$ generates an analytic, contractive, compact,  Markovian $C_0$-semigroup on the Hilbert space $\calH$.
\end{theorem}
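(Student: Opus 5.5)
The theorem collects four properties of $(e^{-t\OUWen})_{t\ge0}$ on $\calH=L^2_\mu(\Graph)\times\bbK$: analyticity, contractivity, compactness and the Markov property. I will take them one at a time.

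\emph{Form properties.} The form $(\form,\calV)$ is symmetric, non-negative and densely defined in $\calH$. For density, $\calV$ contains all pairs $(f,f(\oVertex))$ with $f\in H^1_\mu(\Graph)$. The first components are dense in $L^2_\mu(\Graph)$ by Proposition~\ref{prop:form-properties}. One can also prescribe the trace $f(\oVertex)$ arbitrarily by adding a function supported near $\oVertex$ with small $L^2_\mu$-norm.

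\emph{Closedness.} This follows because $(f,\psi)\mapsto\psi=f(\oVertex)$ is continuous on $H^1_\mu(\Graph)$; on a compact edge near the root $\mu$ is comparable to Lebesgue measure. Hence $\calV$ is closed in $H^1_\mu(\Graph)\times\bbK$, and the form norm on $\calV$ is equivalent to the $H^1_\mu$-norm of $f$.

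\emph{Analyticity and contractivity.} These come from the standard theory \cite[Proposition~1.51 and Theorem~1.52]{Ouh05}, or directly from \cite[Theorem~2.3]{Mugnolo2010}: a closed, symmetric, non-negative form yields a self-adjoint, non-negative operator, hence an analytic contraction semigroup.

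\emph{Compactness.} It suffices that $\calV\hookrightarrow\calH$ is compact. Take a bounded sequence $(f_n,\psi_n)$ in $\calV$. Proposition~\ref{prop:form-properties} gives a subsequence with $f_n$ convergent in $L^2_\mu(\Graph)$. The scalars $\psi_n=f_n(\oVertex)$ are bounded by the trace estimate, so a further subsequence converges in $\bbK$. Hence the embedding is compact, which gives a compact resolvent, and analyticity then makes each $e^{-t\OUWen}$, $t>0$, compact.

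\emph{Markov property.} I would apply Ouhabaz's invariance criterion for closed convex sets \cite[Theorem~2.14]{Ouh05} with $C=[\zero_{\calH},\one_{\calH}]$, following \cite[Theorem~3.4]{Mugnolo2010}. The projection onto $C$ acts componentwise as $P(f,\psi)=\bigl((0\vee \re f)\wedge 1,\ (0\vee\re\psi)\wedge1\bigr)$. Continuity and the trace identity $f(\oVertex)=\psi$ survive this truncation, so $P\calV\subset\calV$. The truncated function's derivative is $f'\one_{\{0<\re f<1\}}$ edgewise, which gives
\[
\re\form\bigl(P u,u-P u\bigr)=\tfrac12\int_{\{0<\re f<1\}}\re f'\,\overline{(f-\re f)'}\,\mu(\dxShort x)\ge 0 .
\]
This requires $\re f'\cdot\overline{\mathrm i\,\mathrm{Im} f'}$ to have vanishing real part, which holds. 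So $C$ is invariant under the semigroup. Together with $\one_{\calH}\in\dom{\OUWen}$ and $\OUWen\one_{\calH}=0$, which give $e^{-t\OUWen}\one_{\calH}=\one_{\calH}$, the semigroup is Markovian. Positivity is obtained the same way, using the cone $\{f\ge0,\psi\ge0\}$.

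The main obstacle is handling the boundary component in the convex-set criterion, namely checking that $P$ maps $\calV$ into $\calV$. This is settled by noting that truncation commutes with point evaluation at $\oVertex$ for continuous functions.
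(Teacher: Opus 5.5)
Your proposal is correct and follows essentially the paper's route. The paper places $(\form,\calV)$ in the framework of \cite[Theorem~2.3]{Mugnolo2010} to get analyticity, contractivity and compactness, and obtains the Markov property from invariance of $[\zero_{\calH},\one_{\calH}]$ (proved as in \cite[Theorem~3.4]{Mugnolo2010}) together with $\OUWen\one_{\calH}=0$; you spell out exactly these steps in more detail. One minor notational slip: in your display the real part belongs inside the integral, since $\re f'\,\overline{(f-\re f)'}$ is purely imaginary on $\{0<\re f<1\}$, so that in fact $\re\form(Pu,u-Pu)=0$.
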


A direct computation shows that this semigroup delivers the solution for a parabolic problem with dynamic transmission condition at the root
\[
    \left\{
    \begin{aligned}
        \partial_t u(t,x)
        &= \frac{1}{2} u''(t,x) - \modulus{x} u'(t,x),
        && x\in \Graph\setminus \VertexSet,\\
        \partial_t \psi(t)
        &= \frac{1}{\sqrt \pi}\sum_{\eEdge\sim \oVertex}
        \frac{\partial u_{\eEdge}}{\partial n}(t,\oVertex),\\
        u(t,\oVertex) &= \psi(t),\\
         \sum_{\eEdge\sim\vVertex}
        \frac{\partial u_{\eEdge}}{\partial n}(t,\vVertex)
        &=0,
        && \vVertex\in\VertexSet\setminus\{\oVertex\},\\
        u(0,x)&=u_0(x),\quad \psi(0)=\psi_0.
    \end{aligned}
    \right.
\]
where the second equation is well defined because, by analyticity of the
semigroup, for each $t>0$ the solution belongs to $\dom{\OUWen}$. In
particular, $u(t,\argument)\in \widetilde H^2_\mu(\Graph)$, so the normal
derivatives at the root are well defined. Moreover, since $(u(t,\argument),\psi(t))\in\calV$, we have $u(t,\oVertex)=\psi(t)$.
More precisely, for all $t\ge 0$,
\[
    \begin{pmatrix}
        u(t,\argument)\\
        \psi(t)
    \end{pmatrix}
    =
    e^{-t\OUWen}
    \begin{pmatrix}
        u_0\\
        \psi_0
    \end{pmatrix},
    \qquad t\ge 0.
\]

Finally, the arguments used in Theorem~\ref{thm:invariant-measure} extend to this setting, yielding existence and uniqueness (up to normalisation) of an invariant measure for the associated semigroup.

\section{Ornstein--Uhlenbeck operator on regular rooted trees}
    \label{sec:regular-trees}

In the rest of the paper, we shall restrict our studies to a more special class of rooted trees, called \emph{regular rooted trees} (see the definition below), whose radial symmetry allows for a reduction of the Ornstein--Uhlenbeck operator to a family of one-dimensional problems and consequently obtain finer spectral results.

\subsection{Preliminaries on regular rooted trees}

Recall that, in Section~\ref{sec:rooted-trees}, our rooted tree $\Graph$ is obtained from a compact rooted metric tree $\widetilde{\Graph}$ by attaching half-lines to each vertex in the boundary set $\LeafSet$.
We now impose a regularity condition on this construction.

\begin{definition}
    The rooted tree $\Graph$ is called \emph{regular} if there exist $N_{\Graph}\in\bbN$ and sequences
    \[
        (t_k)_{k=0}^{N_{\Graph}} \subset [0,\infty),
        \qquad
        (b_k)_{k=0}^{N_{\Graph}} \subset \bbN,
    \]
    such that the following hold:
    \begin{enumerate}[\upshape (i)]
        \item
        $
            0=t_0<t_1<\ldots<t_{N_{\Graph}};
        $
    
        \item the vertex set admits a partition into generations:
        \[
            \VertexSet = \bigsqcup_{k=0}^{N_{\Graph}} \VertexSet_k
        \quad \text{where} \quad
            \VertexSet_k = \{ \vVertex\in \VertexSet : \modulus{\vVertex} = t_k \};
        \]
    
        \item for each $k\in\{0,\ldots,N_{\Graph}\}$, every vertex $\vVertex\in \VertexSet_k$ is incident to exactly $b_k$ outgoing edges (children);
    
        \item the vertex set $\VertexSet_{N_{\Graph}}$ coincides with the set of leaves $\LeafSet$ of $\widetilde{\Graph}$, therefore each such vertex is incident to $b_{N_{\Graph}}$ half-lines.
    \end{enumerate}
    The numbers $t_0,\ldots,t_{N_{\Graph}}$ are called the \emph{generation levels} and $b_0,\ldots,b_{N_{\Graph}}$ the \emph{branching numbers} of the tree.
\end{definition}

\begin{remarks}
    \leavevmode
    \begin{enumerate}[\upshape (a)]
        \item By the metric structure introduced in Section~\ref{sec:rooted-trees}, each edge
        $\eEdge = \edge{\vVertex}{\wVertex}$ with
        $\vVertex \in \VertexSet_k,\wVertex \in \VertexSet_{k+1}$, and $\vVertex \preceq \wVertex$ is identified with the interval $[t_k,t_{k+1}]$.
        In particular,
        $
            d(\vVertex,\wVertex) = t_{k+1}-t_k
        $.

        \item The case $b_{N_{\Graph}}=1$ corresponds to a tree that does not branch beyond the last generation, while $b_{N_{\Graph}}\ge 2$ yields genuine branching at infinity. The construction rules out vertices with no outgoing compact edge
        before the last generation, in turn $b_k\ge 1$ for $k=0,\ldots,N_{\Graph}-1$.
        
        \item The number of half-lines attached to $\widetilde \Graph$ is given by
        $
            m_\infty = b_0 b_1 b_2\ldots b_{N_{\Graph}}
        $.
    \end{enumerate}
\end{remarks}

For the analysis that follows, it is convenient to associate subtrees to vertices and edges. For vertices $\vVertex,\wVertex \in \VertexSet$ and edge $\eEdge=\edge{\vVertex}{\wVertex}$ where $\vVertex\preceq \wVertex$, we define
\[
    \Graph_{\vVertex} \coloneqq \{ x\in\Graph : x \succeq \vVertex \}
    \quad \text{and} \quad
    \Graph_{\eEdge} \coloneqq \eEdge \cup \Graph_{\wVertex}.
\]
In particular, $\Graph_{\oVertex}=\Graph$.
More generally, any subtree $\calS \subset \Graph$ inherits a metric structure from $\Graph$. We associate to $\calS$ its \emph{branching function}
\[
    g_{\calS} : [0,\infty) \to \bbN_0,
    \qquad
    g_{\calS}(t) \coloneqq \cardinality{ \{ x\in\calS : \modulus{x} = t\}}.
\]

\begin{figure}
    \centering
    \begin{forest}
    [
        $\oVertex$
	[
            $\vVertex$, edge label={node[midway,right,font=\scriptsize]{$\eEdge_o$}},for tree={l sep=7mm}, 
	    [
                $\wVertex_1$, edge label={node[midway,above,font=\scriptsize]{$\eEdge_{\vVertex}^1$}}
		    [
                    $\zVertex_1$, edge label={node[midway,above,font=\scriptsize]{$\eEdge$}}
                    [
                        ,edge=dashed
                    ]
                    [
                        ,edge=dashed
                    ]
                    [
                        ,edge=dashed
                    ]
			]
                [
                    $\zVertex_2$
                    [
                        ,edge=dashed
                    ]
                    [
                        ,edge=dashed
                    ]
                    [
                        ,edge=dashed
                    ]
                ]
            ]
            [
                $\wVertex_2$, edge label={node[midway,right,font=\scriptsize]{$\eEdge_{\vVertex}^2$}}
                [
                    $\zVertex_3$
                    [
                        ,edge=dashed
                    ]
                    [
                        ,edge=dashed
                    ]
                    [
                        ,edge=dashed
                    ]
                ]
                [
                    $\zVertex_4$
                    [
                        ,edge=dashed
                    ]
                    [
                        ,edge=dashed
                    ]
                    [
                        ,edge=dashed
                    ]
                ]
            ]
            [
                $\wVertex_3$, edge label={node[midway,above,font=\scriptsize]{$\eEdge_{\vVertex}^3$}}
                [
                    $\zVertex_5$
                    [
                        ,edge=dashed
                    ]
                    [
                        ,edge=dashed
                    ]
                    [
                        ,edge=dashed
                    ]
			]
                [
                    $\zVertex_6$
                    [
                        ,edge=dashed
                    ]
                    [
                        ,edge=dashed
                    ]
                    [
                        ,edge=dashed
                    ]
                ]
            ]
        ]	
    ]
    \end{forest}
    \caption{A regular rooted tree. The dashed edges indicate the
    attached half-lines at the boundary vertices.}
    \label{fig:illustration}
\end{figure}

\begin{example}
    \label{ex:regular-rooted-tree}
    In Figure~\ref{fig:illustration}, we have a regular rooted tree $\Graph=(\VertexSet, \EdgeSet)$ with
    \[
        \VertexSet = \{ \oVertex, \vVertex, \wVertex_1, \wVertex_2, \wVertex_3, \zVertex_1,\ldots,\zVertex_6\}
        \quad\text{and}\quad
        \EdgeSet \supseteq \{ \eEdge_\oVertex, \eEdge_{\vVertex}^1, \eEdge_{\vVertex}^2, \eEdge_{\vVertex}^3, \eEdge\}. 
    \]
    Further, $N_{\Graph}=3$ and the branching sequence is given by $b_0=1, b_1=3, b_2=2$, and $b_3=3$.
    Thus the root has one child, the next generation branches into three
    vertices, each of those branches into two vertices, and finally each
    boundary vertex is incident to three attached half-lines.

    The generation sizes are
    \[
        \cardinality{\VertexSet_0}=1,\qquad
        \cardinality{\VertexSet_1}=b_0=1,\qquad
        \cardinality{\VertexSet_2}=b_0b_1=3,\qquad
        \cardinality{\VertexSet_3}=b_0b_1b_2=6.
    \]
    Hence $\LeafSet=\VertexSet_3$ consists of six vertices, and since each of
    them carries $b_3=3$ half-lines, the total number of half-lines is
    $
        \halflines= b_0b_1b_2b_3=18
    $.
    If we write
    \[
        m_k\coloneqq \cardinality{\VertexSet_k}=b_0b_1\ldots b_{k-1},
        \qquad k=1,2,3,
    \]
    then $m_1=1, m_2=3$, and $m_3=6$.
    These numbers will later appear as multiplicities in the orthogonal
    decomposition of the operator reduction.

    The branching function of the whole tree is
    \[
        g_{\Graph}(t)=
        \begin{cases}
            1,      \quad& t\in[0,t_1),\\
            3,      \quad& t\in[t_1,t_2),\\
            6,      \quad& t\in[t_2,t_3),\\
            18,     \quad& t\in[t_3,\infty)
        \end{cases}
    \]
    and if $\eEdge$ is the first edge emanating from
    $\wVertex_1$, then the subtree
    $\Graph_{\eEdge}$ has branching function
    \[
        g_{\Graph_{\eEdge}}(t)=
        \begin{cases}
            0,      \quad& t\in[0,t_2),\\
            1,      \quad& t\in[t_2,t_3),\\
            3,      \quad& t\in[t_3,\infty).
        \end{cases}
    \]
\end{example}

\subsection{\texorpdfstring{Orthogonal decomposition of $L^2_\mu(\Graph)$}{Orthogonal decomposition of L^2_μ}}

Following \cite{Solomyak2004}, we now exploit the radial symmetry encoded by the sequences $(t_k)$ and $(b_k)$ of a regular rooted tree to decompose the space $L^2_\mu(\Graph)$ into orthogonal subspaces that are invariant under the Ornstein--Uhlenbeck operator. 

Let $\Graph$ be a regular rooted tree. For any subtree $\calS\subset\Graph$, we consider the space of all \emph{symmetric} functions,
\[
    \calF_{\calS}
    \coloneqq
    \left\{
        f\in L^2_\mu(\Graph):
        \begin{array}{l}
            f(x)=0 \text{ for } x\notin\calS,\\[0.2em]
            f(x)=f(y)\text{ whenever }x,y\in\calS\text{ and }\modulus{x}=\modulus{y}
        \end{array}
    \right\},
\]
i.e., $\calF_{\calS}$ consists of all functions that are supported in $\calS$ and depend there only on the distance from the root. Clearly $\calF_{\calS}$ is a closed subspace of $L^2_{\mu}(\Graph)$ and the corresponding orthogonal projection is given by radial averaging on $\calS$, namely
\[
    (P_{\calS}f)(x)
    \coloneqq
    \begin{cases}
        g_{\calS}(\modulus{x})^{-1}
        \displaystyle\sum_{\substack{y\in\calS\\\modulus{y}=\modulus{x}}} f(y), \qquad
        &x\in\calS,\\[1em]
        0,\qquad &x\notin\calS.
    \end{cases}
\]

We next introduce a weighted measure on $[0,\infty)$. Since the density of $\mu$ depends only on the distance from the root, we define
\begin{equation}\label{eq:measure-nu}
    \nu(\dxShort t) \coloneqq  
        g_{\Graph}(t)G(t)\dx t
        =
        g_{\Graph}(t)\frac{2}{\sqrt\pi}e^{-t^2}\dx t.
\end{equation}
In this way, the radial part of $L^2_\mu(\Graph)$ is identified with the weighted space $L^2_\nu([0,\infty))$.

Next, fix $k\in \{0,1,\ldots,N_{\Graph}\}$ and $\vVertex\in\VertexSet_k$. If $\eEdge\in\EdgeSet$ is an edge with initial vertex $\vVertex$, then every function in $\calF_{\eEdge}\coloneqq \calF_{\Graph_{\eEdge}}$ is radial, and the map
\[
    J_{\eEdge}:\calF_{\eEdge}\to L^2_\nu([t_k,\infty)),
    \qquad
    J_{\eEdge}f(t):=(b_0\cdots b_k)^{-1/2}f(x),\quad \modulus{x}=t,
\]
is an isometric isomorphism. 
Indeed, for $t\ge t_k$ the number of points in the subtree
$\Graph_{\eEdge}$ at distance $t$ from the root is
$(b_0\cdots b_k)^{-1}g_{\Graph}(t)$. Hence, if $f\in\calF_{\eEdge}$ and $\widehat f$ denotes its radial representative,
then
\[
    \int_{\Graph_{\eEdge}} \modulus{f(x)}^2\mu(\dxShort x)
    =
    \int_{t_k}^\infty
    \modulus{(b_0\cdots b_k)^{-1/2}\widehat f(t)}^2 \nu(\dxShort t).
\]
Therefore, denoting by
\[
    \eEdge_{\vVertex}^1,\ldots,\eEdge_{\vVertex}^{b_k}
\]
all the edges emanating from $\vVertex$, we obtain that the orthogonal sum
\[
    \widetilde{\calF}_{\vVertex}
    \coloneqq
    \bigoplus_{r=1}^{b_k}\calF_{\eEdge_{\vVertex}^r}
\]
is isometrically isomorphic to $\left(L^2_\nu([t_k,\infty))\right)^{b_k}$ via
\[
    J_{\vVertex}:
    \widetilde{\calF}_{\vVertex}
    \to
    \left(L^2_\nu([t_k,\infty))\right)^{b_k},
    \qquad
    (f_1,\ldots,f_{b_k})
    \mapsto
    \left(J_{\eEdge_{\vVertex}^1}f_1,\ldots,J_{\eEdge_{\vVertex}^{b_k}}f_{b_k}\right).
\]
To separate the symmetric and oscillatory branch modes, we diagonalise the finite-dimensional branching structure by means of the discrete Fourier basis of $\bbC^{b_k}$. Let
$
    \omega_k:=e^{2\pi i/b_k},
$
and for $j=1,\ldots,b_k$ define
\[
    \mathbf h_k^{\langle j\rangle}
    \coloneqq
    b_k^{-1/2}
    \left(\omega_k^j,\omega_k^{2j},\ldots,\omega_k^{(b_k-1)j},1\right)
    \quad \text{and}\quad
    \calF_{\vVertex}^{\langle j\rangle}
    \coloneqq
    J_{\vVertex}^{-1}
    \left\{
        \mathbf h_k^{\langle j\rangle}\varphi:
        \varphi\in L^2_\nu([t_k,\infty))
    \right\},
\]
i.e., $\calF_{\vVertex}^{\langle j\rangle}$ is the subspace corresponding to the $j$-th branching mode at the vertex $\vVertex$.
The mode $j=b_k$ is the symmetric one, while the modes
$
    j=1,\ldots,b_k-1
$
describe oscillations between the outgoing branches. In particular,
$
    \calF_{\Graph}=\calF_{\oVertex}^{\langle b_0\rangle}
$
coincides with the global radial subspace.
We point out that
\begin{equation}
    \label{eq:solomyak-construction-non-radial}
    J_{\vVertex}f=\mathbf h_k^{\langle j\rangle}\varphi
    \Leftrightarrow
    f(x)
    =
    \sqrt{b_0\cdots b_k}\, \omega_k^{rj}\,\varphi(\modulus{x}),
    \quad \text{for all }
    x\in \Graph_{\eEdge_{\vVertex}^r}, \,
    r=1,\ldots,b_k.
\end{equation}
We also write
\[
    J_{\oVertex}:\calF_{\Graph}\to L^2_\nu([0,\infty)),
    \qquad
    J_{\oVertex}f(t)=f(x),\quad \modulus{x}=t,
\]
for the global radial identification.

Finally, for each $f\in L_{\mu}^2(\Graph)$, we define its radial component $f_{\oVertex} \in L^2_{\nu}([0, \infty))$ and the branching components $f_{\vVertex} ^{\langle j \rangle} \in L^2_{\nu}([t_k, \infty))$ as follows:
\begin{align}
    \label{eq:decomposition-projection-onto-components}
    f_{\oVertex}              &\coloneqq J_{\oVertex} P_{\oVertex} f\\
    f_{\vVertex} ^{\langle j \rangle} &\coloneqq b_k^{-1/2} \sum_{r=1}^{b_k} \omega_k^{-rj} J_{\eEdge_{\vVertex} ^r} P_{\eEdge_{\vVertex}^r} f \qquad j = 1,\ldots, b_k;
\end{align}
here we have used the shorthands $P_{\oVertex} \coloneqq P_{\Graph_{\oVertex}}$ and $P_{\eEdge}\coloneqq P_{\Graph_{\eEdge}}$ for an edge $\eEdge \in \EdgeSet$. We point out that the symmetric mode at the root coincides with the global radial component, i.e.,
$
f_{\oVertex} = f_{\oVertex}^{\langle b_0\rangle}.
$

We are now ready to state the promised decomposition of $L^2_\mu(\Graph)$. We use the convention that direct sums over empty index sets are omitted.

\begin{proposition}
    \label{prop:decomposition-solomyak}
    For a regular rooted tree $\Graph$, one has the decomposition
    \[
        L^2_\mu(\Graph)
        =
        \calF_{\Graph}
        \oplus
        \bigoplus_{k=0}^{N_{\Graph}}
        \ \bigoplus_{\vVertex\in\VertexSet_k}
        \ \bigoplus_{j=1}^{b_k-1}
        \calF_{\vVertex}^{\langle j\rangle}.
    \]
    and the identity
    \begin{equation}
        \label{eq:decomposition-solomyak-identity}
        \int_{\Graph} \modulus{f(x)}^2 \mu(\dxShort x) 
        =
        \int_0^\infty \modulus{f_{\oVertex}(t)}^2 \nu(\dxShort t) + \sum_{k= 0}^{N_{\Graph}} 
        \ \sum_{\vVertex\in\VertexSet_k}
        \ \sum_{j=1}^{b_{k}-1} 
        \int_{t_k}^{\infty} \modulus{f_{\vVertex} ^{\langle j \rangle}(t)}^2 \nu(\dxShort t)
    \end{equation}
    holds for every $f\in L_{\mu}^2(\Graph)$. 
    Additionally, the decomposition is compatible with the Sobolev structure. More precisely, if $f\in H_{0,\mu}^{1}(\Graph)$, then $f_{\oVertex} \in H_{0,\nu}^{1}([0, \infty))$ and $f_{\vVertex}^{\langle j \rangle} \in H_{0,\nu}^{1}([t_k, \infty))$ with
    \begin{equation}
        \label{eq:decomposition-solomyak-identity-derivative}
        \int_{\Graph} \modulus{f'(x)}^2 \mu(\dxShort x) 
        = 
        \int_0^\infty \modulus{\frac{\dxShort f_{\oVertex}(t)}{\dxShort t}}^2 \nu(\dxShort t)+
        \sum_{k= 0}^{N_{\Graph}}
        \ \sum_{\vVertex\in\VertexSet_k}
        \ \sum_{j=1}^{b_{k}-1}
        \int_{t_k}^{\infty} \modulus{\frac{\dxShort f_{\vVertex} ^{\langle j \rangle}(t)}{\dxShort t}}^2 \nu(\dxShort t).
    \end{equation}
\end{proposition}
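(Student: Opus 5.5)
The plan is to prove first that the listed subspaces are pairwise orthogonal, then that they span $L^2_\mu(\Graph)$. The identity \eqref{eq:decomposition-solomyak-identity} then follows from the isometries $J_{\oVertex}$ and $J_{\vVertex}$ together with unitarity of the discrete Fourier basis. Finally, the Sobolev statement \eqref{eq:decomposition-solomyak-identity-derivative} follows by checking that the component maps commute with differentiation and respect the continuity and vanishing conditions.

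\emph{Orthogonality.} Inside a fixed $\widetilde{\calF}_{\vVertex}$ with $\vVertex\in\VertexSet_k$, the spaces $\calF_{\vVertex}^{\langle j\rangle}$ are orthogonal. This is because $J_{\vVertex}$ is an isometry and the vectors $\mathbf h_k^{\langle j\rangle}$, $j=1,\ldots,b_k$, form an orthonormal basis of $\bbC^{b_k}$. The key observation for different vertices is the following: if $j\neq b_k$, every $f\in\calF_{\vVertex}^{\langle j\rangle}$ has radial average zero over $\Graph_{\vVertex}$ at each level $t$, since $\sum_r\omega_k^{rj}=0$. The function $f$ is supported in $\Graph_{\vVertex}$. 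On that set, a function from $\calF_{\Graph}$, or from a mode space at a vertex $\wVertex\prec\vVertex$, is constant on each level set $\{\modulus{x}=t\}\cap\Graph_{\vVertex}$. The reason is that $\Graph_{\vVertex}$ lies inside a single branch $\Graph_{\eEdge_{\wVertex}^r}$, on which such functions are radial by \eqref{eq:solomyak-construction-non-radial}. Integrating level by level with the radial weight $G$ therefore gives zero inner product. If $\vVertex$ and $\wVertex$ are incomparable, the supports $\Graph_{\vVertex}$ and $\Graph_{\wVertex}$ are disjoint up to null sets. Two distinct vertices of the same generation are incomparable.

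\emph{Completeness.} I would argue by induction over generations, or equivalently by a dimension count at each radius. Fix a level $t\in(t_k,t_{k+1})$, or $t>t_{N_{\Graph}}$. The restriction of $f$ to the $g_{\Graph}(t)$ points at distance $t$ is a vector in $\bbC^{g_{\Graph}(t)}$. The subspaces that are nonzero at this level come from $\calF_{\Graph}$ (dimension $1$) and from the modes $j<b_l$ at vertices of generations $l\le k$. Their total dimension is
\[
    1+\sum_{l=0}^{k} m_l(b_l-1)=b_0\cdots b_k=g_{\Graph}(t),
\]
where $m_l=b_0\cdots b_{l-1}$ and $m_0=1$; the sum telescopes. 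By the orthogonality already shown, the pointwise vectors contributed by these subspaces are mutually orthogonal. They therefore span $\bbC^{g_{\Graph}(t)}$. A measurable fiberwise decomposition then gives $f$ as the sum of its projections. The projections are exactly the maps $J^{-1}$ applied to $f_{\oVertex}$ and $f_{\vVertex}^{\langle j\rangle}$ from \eqref{eq:decomposition-projection-onto-components}. Summing the squared fiber norms, weighted by $G$, over $t$ yields \eqref{eq:decomposition-solomyak-identity}.

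\emph{Sobolev compatibility.} This is the step I expect to be the main obstacle. For $f\in H^1_{0,\mu}(\Graph)$, the component maps are finite linear combinations of radial averages over subtrees. Edgewise, averaging commutes with weak differentiation, so $(f_{\vVertex}^{\langle j\rangle})'=(f')_{\vVertex}^{\langle j\rangle}$. Applying \eqref{eq:decomposition-solomyak-identity} to $f'\in L^2_\mu$ then gives \eqref{eq:decomposition-solomyak-identity-derivative}, provided each component lies in $H^1$ on its half-line. That requires continuity across the radii $t_l$, which follows from the continuity of $f$ at vertices: each level-set average jumps only by rearranging continuous values. The vanishing boundary conditions need separate arguments. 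For $f_{\oVertex}$, $f(\oVertex)=0$ gives $f_{\oVertex}(0)=0$. For $f_{\vVertex}^{\langle j\rangle}$ with $j<b_k$, continuity at $\vVertex$ makes all branch traces at $t_k$ equal to $f(\vVertex)$, so $f_{\vVertex}^{\langle j\rangle}(t_k)=b_k^{-1/2}f(\vVertex)\sum_r\omega_k^{-rj}=0$. This places the component in $H^1_{0,\nu}([t_k,\infty))$.
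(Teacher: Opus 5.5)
Your proof is correct, but it takes a more elementary, self-contained route than the paper. The paper does not verify orthogonality or completeness itself. It applies \cite[Theorem~2.3]{NaimarkSolomyak2001} at each vertex, observing only that the Gaussian density is a common radial factor. For the Sobolev part it appeals to Theorems~2.3 and~3.1 there, namely invariance of the mode spaces under differentiation and their orthogonality in $H^1_{0,\mu}(\Graph)$, and adds a brief remark that the projections preserve the boundary condition.

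You instead work fiber by fiber:
\begin{enumerate}
\item orthogonality from vanishing level averages ($\sum_r\omega_k^{rj}=0$ for $j<b_k$) against functions that are constant on level sets of $\Graph_{\vVertex}$;
\item completeness from the telescoping count $1+\sum_{l\le k}m_l(b_l-1)=g_{\Graph}(t)$;
\item the energy identity by applying the $L^2$ identity to $f'$, since radial averaging commutes with differentiation in $\modulus{x}$ on each generation interval.
\end{enumerate}

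What your route buys is transparency. It shows that the weight plays no role, because everything happens level by level, and it makes the total count $\halflines$ of one-dimensional components visible. It also pins down that $f_{\vVertex}^{\langle j\rangle}(t_k)=0$ comes from continuity of $f$ at $\vVertex$ together with $\sum_r\omega_k^{-rj}=0$, not from $f(\oVertex)=0$. Finally, it works verbatim for all $f\in H^1_\mu(\Graph)$. The paper's route is shorter but leaves these verifications to the cited theory.

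One step should be stated precisely rather than as ``rearranging continuous values''. Continuity of the level averages across $t_l$ uses regularity: every vertex of $\VertexSet_l$ has exactly $b_l$ children. Because of this, the limit from below, $m_l^{-1}\sum_{\vVertex\in\VertexSet_l}f(\vVertex)$, and the limit from above, $(m_lb_l)^{-1}\sum_{\vVertex\in\VertexSet_l}b_l f(\vVertex)$, coincide. The same holds inside each subtree $\Graph_{\eEdge_{\vVertex}^r}$. On a tree with unequal branching within one generation these averages would jump.
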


\begin{proof}
    Since the radial density $G(t)= \frac{2}{\sqrt \pi}e^{-t^2}$ 
    depends only on the distance from the root and $\Graph$ is regular, the decomposition of $L^2_\mu(\Graph)$ follows from \cite[Theorem~2.3]{NaimarkSolomyak2001} applied at each vertex. In particular, the case $b_0>1$ is handled by performing the decomposition at the root in the same way as at higher generations.

    To see the compatibility with the Sobolev structure, let $f\in H_{0,\mu}^{1}(\Graph)$. 
    We know from \cite[Theorems~2.3 and~3.1]{NaimarkSolomyak2001} that the derivative function leaves the spaces $\calF_{\Graph}$ and $\calF_{\vVertex} ^{\langle j \rangle}$ invariant. This means that $f_{\oVertex}'\in \calF_{\Graph}$ and $\left(f_{\vVertex}^{\langle j \rangle}\right)'\in \calF_{\vVertex}^{\langle j \rangle}$. Further, we even obtain from \cite[Theorems~2.3 and 3.1]{NaimarkSolomyak2001}, that the subspaces
    \[
        H_{0,\Graph}
        \coloneqq \calF_{\Graph} \cap H^1_{0,\mu}(\Graph) 
        \quad \text{and} \quad 
        H_{0,\vVertex}^{\langle j\rangle}
        \coloneqq \calF_{\vVertex}^{\langle j\rangle} \cap H^1_{0,\mu}(\Graph)
    \]
    are orthogonal in $H_{0,\mu}^{1}(\Graph)$. Indeed, passing from length measure to the
    Gaussian measure only multiplies the level-wise inner product by the common radial
    factor $G(t)$, and therefore the orthogonality of the branching modes is preserved.

    Moreover, due to~\eqref{eq:decomposition-projection-onto-components}, the projections act locally on each branch, are compatible with the decomposition of $f$ and preserve the boundary condition (since $f(\oVertex)=0$ and projections vanish at the root). Thus, $f_{\oVertex} \in H_{0,\nu}^{1}([0, \infty))$, $f_{\vVertex} ^{\langle j \rangle} \in H_{0,\nu}^{1}([t_k, \infty))$, and~\eqref{eq:decomposition-solomyak-identity-derivative} holds.
\end{proof}

\begin{remark}
    The argument in the proof of Proposition~\ref{prop:decomposition-solomyak} also shows that the subspaces
    \[
        \calH_{\Graph}
        \coloneqq \calF_{\Graph} \cap H^1_{\mu}(\Graph) 
        \quad \text{and} \quad 
        \calH_{\vVertex}^{\langle j\rangle}
        \coloneqq \calF_{\vVertex}^{\langle j\rangle} \cap H^1_{\mu}(\Graph)
    \]
    orthogonally decompose $H_{\mu}^{1}(\Graph)$.
\end{remark}

\subsection{Reducing the Ornstein--Uhlenbeck operator}

We now identify the action of the Ornstein--Uhlenbeck operator on the components arising in Proposition~\ref{prop:decomposition-solomyak}.

Let $\Graph$ be a regular rooted tree and fix $k\in \{0, \ldots, N_{\Graph}\}$. On $L^2_{\nu}([t_k, \infty))$, we consider the sesquilinear form $\form_k(\argument,\argument)$ given by 
\[
    \form_k(\varphi, \psi) = \frac12 \int_{t_k}^\infty \varphi'(t)\overline{\psi'(t)} \nu(\dxShort t).
\]
The corresponding Dirichlet and Neumann forms are given by restricting $\form_k$ to the domains
\[
    \domDir{\form_k} \coloneqq H^1_{0,\nu}( [t_k,\infty) )
    \quad\text{and}\quad
    \domNeu{\form_k} \coloneqq H^1_{\nu}( [t_k,\infty) )
\]
respectively. Both  $(\form_k, \domDir{\form_k})$ and $(\form_k, \domNeu{\form_k})$ are closed, densely defined, symmetric, and non-negative forms on $L^2_{\nu}([t_k, \infty))$. Denote the corresponding self-adjoint operators by $\OUDirLevel{k}$ and $\OUNeuLevel{k}$ respectively.

The domains of the operators $\OUDirLevel{k}$ and $\OUNeuLevel{k}$ are described in Proposition~\ref{prop:domain-reduced-operator} below. First, let us state the main reduction result of this subsection. In the sequel, $A^{[r]}$ stands for the orthogonal sum of $r$ copies of a self-adjoint operator $A$, with the convention that $A^{[0]}$ is omitted.

\begin{theorem}
    \label{thm:operator-decomposition}
    The operator $\OU^{\Dirichlet}$ is unitarily equivalent to
    \[
        \OUDirLevel{0}
        \oplus
        \bigoplus_{k=0}^{N_{\Graph}}
        \left(\OUDirLevel{k}\right)^{[m_k(b_k-1)]},
    \]
    whereas the operator $\OU^{\Neumann}$ is unitarily equivalent to
    \[
        \OUNeuLevel{0}
        \oplus
        \bigoplus_{k=0}^{N_{\Graph}}
        \left(\OUDirLevel{k}\right)^{[m_k(b_k-1)]}.
    \]
    Here $m_0\coloneqq 1$ and $m_k\coloneqq b_0b_1\cdots b_{k-1}$ for $k\ge 1$. 
\end{theorem}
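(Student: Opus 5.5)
We work at the level of forms, since both realisations were constructed from $\formDir$ and $\formNeu$ (Proposition~\ref{prop:form-operator}). Let $U$ denote the unitary map from $L^2_\mu(\Graph)$ onto
\[
    L^2_\nu([0,\infty))\oplus\bigoplus_{k=0}^{N_{\Graph}}\bigoplus_{\vVertex\in\VertexSet_k}\bigoplus_{j=1}^{b_k-1}L^2_\nu([t_k,\infty)),
    \qquad
    f\mapsto\bigl(f_{\oVertex},(f_{\vVertex}^{\langle j\rangle})_{k,\vVertex,j}\bigr).
\]
It is unitary by Proposition~\ref{prop:decomposition-solomyak} and identity~\eqref{eq:decomposition-solomyak-identity}. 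Since $\cardinality{\VertexSet_k}=m_k$, the target splits into one copy of $L^2_\nu([0,\infty))$ and $m_k(b_k-1)$ copies of $L^2_\nu([t_k,\infty))$ for each $k$. This matches the multiplicities in the statement. It therefore suffices to show that $U$ maps $\domX{\form}$ onto the direct sum of the corresponding one-dimensional form domains, and that the form transforms into the sum of the forms $\form_k$. Unitary equivalence of the associated self-adjoint operators then follows from the uniqueness of the operator associated with a closed form.

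\emph{Form identity.} For $f\in H^1_{0,\mu}(\Graph)$, identity~\eqref{eq:decomposition-solomyak-identity-derivative} gives
\[
    \form(f,f)=\form_0(f_{\oVertex},f_{\oVertex})+\sum_{k,\vVertex,j}\form_k\bigl(f_{\vVertex}^{\langle j\rangle},f_{\vVertex}^{\langle j\rangle}\bigr).
\]
Polarisation then yields the same identity for the sesquilinear forms. In the Neumann case, the Remark following Proposition~\ref{prop:decomposition-solomyak} provides the analogous orthogonal decomposition of $H^1_\mu(\Graph)$, so the same identity holds for $f\in H^1_\mu(\Graph)$.

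\emph{Boundary conditions of each component.} This step decides which realisation appears in each summand.

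For a non-symmetric mode $j\neq b_k$ at $\vVertex\in\VertexSet_k$, formula~\eqref{eq:solomyak-construction-non-radial} shows that the corresponding function on $\Graph_{\eEdge_{\vVertex}^r}$ equals $\omega_k^{rj}$ times a common profile. Continuity at $\vVertex$ of a function in $H^1_\mu(\Graph)$ supported in $\Graph_{\vVertex}$ forces the value at $\vVertex$ to vanish on the branches. Since $\sum_r\omega_k^{rj}=0$, the value $f(\vVertex)$ is compatible only with $\varphi(t_k)=0$. Hence $f_{\vVertex}^{\langle j\rangle}\in H^1_{0,\nu}([t_k,\infty))$ for both $\mathrm X=\Dirichlet$ and $\mathrm X=\Neumann$, which explains why only $\OUDirLevel{k}$ appears in the oscillatory summands.

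For the radial component $f_{\oVertex}$, radial averaging preserves $f(\oVertex)$. So $f_{\oVertex}(0)=f(\oVertex)$, which vanishes in the Dirichlet case and is free in the Neumann case. This gives $\OUDirLevel{0}$, respectively $\OUNeuLevel{0}$.

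\emph{Surjectivity of $U$ on form domains.} Conversely, take $\varphi\in H^1_{0,\nu}([t_k,\infty))$ and define $f$ by~\eqref{eq:solomyak-construction-non-radial}, extended by zero outside $\Graph_{\vVertex}$. Then $f$ is continuous, including at $\vVertex$ because $\varphi(t_k)=0$, and at the deeper vertices because it is radial within each branch. Hence $f\in H^1_{0,\mu}(\Graph)$. Similarly, a radial $\psi\in H^1_\nu([0,\infty))$ (with $\psi(0)=0$ in the Dirichlet case) lifts to the appropriate form domain. Thus $U(\domX{\form})$ equals the direct sum of the one-dimensional form domains. Finite sums are handled directly, and the general case follows by closedness using the form-norm identity above.

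\emph{Main obstacle.} The delicate point is the precise bookkeeping at the root when $b_0>1$. The decomposition there contains both the symmetric mode, which gives $f_{\oVertex}$ on $[0,\infty)$ with a Dirichlet or Neumann condition depending on $\mathrm X$, and the $b_0-1$ oscillatory modes, which always carry Dirichlet conditions at $t_0=0$. These must be matched with $m_0(b_0-1)=b_0-1$ copies of $\OUDirLevel{0}$.

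A related check is that in the Neumann case the oscillatory components still vanish at their starting vertex. I would verify this directly from continuity at $\vVertex$ combined with $\sum_{r}\omega_k^{rj}=0$, rather than relying solely on the cited Naimark–Solomyak results.
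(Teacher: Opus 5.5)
Your proposal is correct and follows essentially the same route as the paper. You transport the forms through the Naimark--Solomyak unitary of Proposition~\ref{prop:decomposition-solomyak} and read off $f_{\oVertex}(0)=f(\oVertex)$ for the radial mode. You then force $\varphi(t_k)=0$ on every oscillatory mode via continuity at $\vVertex$, and count $\cardinality{\VertexSet_k}(b_k-1)=m_k(b_k-1)$ copies. Your explicit lifting step (showing that $U$ maps $\domX{\form}$ onto the sum of the one-dimensional form domains), together with the appeal to uniqueness of the operator associated with a closed form, makes precise what the paper leaves implicit. Your use of $\sum_r\omega_k^{rj}=0$ also correctly covers the root case $k=0$, where the support argument alone would not apply.
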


\begin{proof}
    By Proposition~\ref{prop:decomposition-solomyak}, the decomposition of
    $L^2_\mu(\Graph)$ is orthogonal and is compatible with the energy form.
    Hence it remains to identify the form domains induced on the radial and
    non-radial components.

    First consider the radial component. For a general function $f\in L^2_\mu(\Graph)$,
    its radial representative is
    $
        f_{\oVertex}=J_{\oVertex}P_{\oVertex}f
    $.
    Therefore, if $f\in\calF_{\Graph}$ and $u=J_{\oVertex}f$, then
    $
        f_{\oVertex}(0)=f(\oVertex)
    $.
    Thus the Dirichlet graph form induces the condition $f_{\oVertex}(0)=0$ on the
    radial component, while the Neumann graph form imposes no vanishing condition
    at $0$. Consequently, the radial component is governed by $\OUDirLevel{0}$ in the
    Dirichlet case and by $\OUNeuLevel{0}$ in the Neumann case.

    We now consider a non-radial component. Fix $\vVertex\in\VertexSet_k$,
    $j\in\{1,\ldots,b_k-1\}$, and let
    $
        \eEdge_{\vVertex}^1,\ldots,\eEdge_{\vVertex}^{b_k}
    $
    be the edges emanating from $\vVertex$. If $f\in \calF_{\vVertex}^{\langle j\rangle}$, by definition, there exists $\varphi\in L^2_{\nu}([t_k,\infty))$ such that
    \[
        J_{\vVertex}f=\mathbf h_k^{\langle j\rangle}\varphi,
        \qquad
        \mathbf h_k^{\langle j\rangle}
        =
        b_k^{-1/2}
        (\omega_k^j,\omega_k^{2j},\ldots,\omega_k^{(b_k-1)j},1).
    \]
    By~\eqref{eq:solomyak-construction-non-radial}, along the edge $\eEdge_{\vVertex}^r$ one has $f(x)=C_k\,\omega_k^{rj}\,\varphi(\modulus{x})$ with $C_k= \sqrt{b_0\ldots b_k}$. Passing to the limit
    $\modulus{x}\downarrow t_k$ yields the values $C_k \omega_k^{rj} \varphi(t_k)$ along the
    different edges. Since functions in $H^1_\mu(\Graph)$ are continuous at
    $\vVertex$, these values must coincide for all $r$, hence
    \[
        \omega_k^{rj}\varphi(t_k)=\omega_k^{sj}\varphi(t_k)
        \qquad \text{for all } r,s.
    \]
    As the vector $\mathbf h_k^{\langle j\rangle}$ is not constant for
    $j\in\{1,\ldots,b_k-1\}$, it follows that $\varphi(t_k)=0$.
    Thus the one-dimensional representative of every non-radial component lies in $H^1_{0,\nu}([t_k,\infty))$.
    Consequently, every non-radial component, both for the Dirichlet and for
    the Neumann graph realisation, is governed by the reduced Dirichlet operator
    $\OUDirLevel{k}$.

    It remains to count multiplicities. At the root, there is one radial mode
    and $b_0-1$ non-radial modes. Hence the Dirichlet realisation gives
    $b_0$ copies of $\OUDirLevel{0}$, whereas the Neumann realisation gives one
    copy of $\OUNeuLevel{0}$ and $b_0-1$ copies of $\OUDirLevel{0}$.
    For $k\ge 1$, there are $m_k=b_0b_1\cdots b_{k-1}$
    vertices in $\VertexSet_k$, and each contributes $b_k-1$ non-radial modes.
    This yields the asserted decompositions.
\end{proof}

\begin{corollary}
    \label{cor:semigroup-decomposition}
    For $\mathrm X\in\{\Dirichlet,\Neumann\}$, the semigroup
    $(e^{-t\OU^{\mathrm X}})_{t\ge 0}$ generated by $-\OU^{\mathrm X}$ on $L^2_\mu(\Graph)$ is unitarily equivalent to the orthogonal direct-sum semigroup
    \[
        e^{-t\OUXLevel{0}}
        \oplus
        \bigoplus_{k=0}^{N_{\Graph}}
        \left(e^{-t\OUDirLevel{k}}\right)^{[m_k(b_k-1)]};
    \]
    where $m_k=b_0b_1\ldots b_{k-1}$ for $k\ge 1$ and $m_0=1$.
\end{corollary}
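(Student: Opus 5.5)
The corollary follows from Theorem~\ref{thm:operator-decomposition} by functional calculus. Fix $\mathrm X\in\{\Dirichlet,\Neumann\}$. Let
\[
    U\colon L^2_\mu(\Graph)\to \calK\coloneqq L^2_\nu([0,\infty))\oplus\bigoplus_{k=0}^{N_{\Graph}}\bigl(L^2_\nu([t_k,\infty))\bigr)^{m_k(b_k-1)}
\]
be the unitary built from the maps $f\mapsto f_{\oVertex}$ and $f\mapsto f_{\vVertex}^{\langle j\rangle}$ of Proposition~\ref{prop:decomposition-solomyak}. Theorem~\ref{thm:operator-decomposition} gives $U\OU^{\mathrm X}U^{-1}=\calB^{\mathrm X}$, where
\[
    \calB^{\mathrm X}\coloneqq \OUXLevel{0}\oplus\bigoplus_{k}\bigl(\OUDirLevel{k}\bigr)^{[m_k(b_k-1)]}.
\]
This is an equality of self-adjoint operators, including their domains.

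\textbf{Step 1: transfer the semigroup through $U$.} All operators involved are self-adjoint and non-negative, since they come from non-negative closed forms. For a bounded Borel function $\phi$ on $[0,\infty)$, the spectral theorem gives $U\phi(\OU^{\mathrm X})U^{-1}=\phi(\calB^{\mathrm X})$. Alternatively, one can avoid the spectral theorem: $V(t)\coloneqq U^{-1}e^{-t\calB^{\mathrm X}}U$ is a $C_0$-semigroup whose generator is $-U^{-1}\calB^{\mathrm X}U=-\OU^{\mathrm X}$. Uniqueness of the semigroup generated by a given operator then yields $V(t)=e^{-t\OU^{\mathrm X}}$. Applying either argument with $\phi(\lambda)=e^{-t\lambda}$ shows that $U e^{-t\OU^{\mathrm X}}U^{-1}=e^{-t\calB^{\mathrm X}}$.

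\textbf{Step 2: the semigroup of an orthogonal sum.} We must check that $e^{-t\calB^{\mathrm X}}$ equals the orthogonal sum of the semigroups of the summands. Each summand acts on its own orthogonal component and leaves it invariant. So the direct sum of the semigroups $e^{-t\OUXLevel{0}}$ and $e^{-t\OUDirLevel{k}}$ is a contraction semigroup on $\calK$. It is strongly continuous: this is clear on finitely supported vectors, and it extends to all of $\calK$ by uniform boundedness and density. Its generator is the orthogonal sum of the individual generators. Hence it coincides with $e^{-t\calB^{\mathrm X}}$, by the same uniqueness argument as in Step~1.

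\textbf{Main subtlety.} The direct sum is finite here, because there are finitely many vertices, so the density argument in Step~2 is elementary. The only real check is that the generator of the sum semigroup has exactly the domain of the orthogonal sum operator, namely all $\bigoplus_i f_i$ with $f_i\in\dom{A_i}$ for each summand $A_i$. For a finite sum this is immediate, since both the difference quotients and the domain conditions are read off componentwise.
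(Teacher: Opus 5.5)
Your proposal is correct and follows the same route as the paper, which also deduces the corollary directly from Theorem~\ref{thm:operator-decomposition} and uses the functional calculus for self-adjoint operators both to transfer the semigroup through the unitary equivalence and to identify the semigroup of an orthogonal sum with the orthogonal sum of the semigroups. Your Step~2, which identifies the generator of the finite direct-sum semigroup componentwise and then invokes uniqueness of the generated semigroup, simply spells out what the paper leaves to a citation.
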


\begin{proof}
    By Theorem~\ref{thm:operator-decomposition}, the operator $\OU^{\mathrm X}$ is unitarily equivalent to the orthogonal sum
    \[
        \OUXLevel{0}
        \oplus
        \bigoplus_{k=0}^{N_{\Graph}}
        \left(\OUDirLevel{k}\right)^{[m_k(b_k-1)]}.
    \]
    Since each operator $\OUXLevel{k}$ is self-adjoint and non-negative, it generates a strongly continuous contraction semigroup $(e^{-t\OUXLevel{k}})_{t\ge 0}$ on $L^2_\nu([t_k,\infty))$. 

    By standard functional calculus for self-adjoint operators (see, e.g.,~\cite[Section~A-I.3.8]{Nag86}), the semigroup generated by the orthogonal sum is the corresponding orthogonal product of the semigroups generated by the summands. The claim follows.
\end{proof}

Let us now characterize the domain of the operators $\OUDirLevel{k}$ and $\OUNeuLevel{k}$.

\begin{proposition}
    \label{prop:domain-reduced-operator}
    Let $\mathrm X\in \{\Dirichlet, \Neumann\}$.
    For $k< N_{\Graph}$, a function $\varphi\in L^2_{\nu}([t_k, \infty))$ lies in $\dom{\OUXLevel{k}}$ if and only if each of the following conditions is satisfied:
    \begin{enumerate}[\upshape (a)]
        \item \emph{(Local regularity)}
        \label{prop:domain-reduced-operator:itm:local-integrability}
        For each $j$ with $k<j\le N_{\Graph}$, the function $\varphi\restrict{[t_{j-1},t_j]}$ lies in  $H^2_{\nu}([t_{j-1},t_j])$
        and, on the terminal half-line, $\varphi\restrict{[t_{N_{\Graph}},\infty)} \in H^2_{\nu}([t_{N_{\Graph}},\infty))$.
    
        \item \emph{(Continuity and transmission)}
        \label{prop:domain-reduced-operator:itm:continuity-and-transmission}
        The function $\varphi$ is continuous on $[t_k,\infty)$. Furthermore, for each $j$ with $k<j\le N_{\Graph}$ the jump conditions $\varphi'(t_j+)=b_j^{-1}\varphi'(t_j-)$ hold.
        
        \item \emph{(Boundary condition at $t_k$)}
        \label{prop:domain-reduced-operator:itm:boundary-condition}
        At point $t_k$, the function $\varphi$ satisfies
        \[
            \begin{cases}
                \varphi(t_k)=0, \quad& \text{if } \mathrm X=\Dirichlet,\\
                \varphi'(t_k+)=0, \quad& \text{if } \mathrm X=\Neumann.
            \end{cases}
        \]
    \end{enumerate}

    In addition,
    \begin{align*}
        \dom{\OUDirLevel{ N_{\Graph} } }
        & = H_{0,\nu}^1([t_{N_{\Graph}}, \infty)) \cap H^2_{\nu}([t_{N_{\Graph}},\infty))
        \text{ and}
        \\
        \dom{\OUNeuLevel{ N_{\Graph} } }
        & = \left\{ \varphi \in H^2_{\nu}([t_{N_{\Graph}},\infty)) : \varphi'(t_{N_{\Graph}}+)=0  \right\}.
    \end{align*}
\end{proposition}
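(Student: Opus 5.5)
The plan is to copy the proof of Proposition~\ref{prop:form-operator} in the one-dimensional weighted setting, where the graph structure is replaced by the piecewise constant weight $g_{\Graph}$. The key observation is that $g_{\Graph}$ is constant on each interval $(t_{j-1},t_j)$ and on $(t_{N_{\Graph}},\infty)$, and it jumps by the factor $b_j$ at $t_j$, i.e.\ $g_{\Graph}(t_j+)=b_j\,g_{\Graph}(t_j-)$ (compare Example~\ref{ex:regular-rooted-tree}). On each such interval, $\nu$ is therefore a constant multiple of $\mu$ restricted to a half-line or interval. Lemma~\ref{lem:integration-by-parts} then applies there and yields the expression $-\tfrac12\varphi''+t\varphi'$ together with boundary terms of the form $\tfrac12 g_{\Graph}(t)G(t)\varphi'(t)\overline{\psi(t)}$ evaluated at the endpoints.

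\emph{Inclusion ``$\supseteq$''.} Let $\varphi$ satisfy (a)--(c) and let $\psi\in\domX{\form_k}$. I would split $\int_{t_k}^\infty$ into the pieces $[t_{j-1},t_j]$ and $[t_{N_{\Graph}},\infty)$ and integrate by parts on each piece. This gives
\[
\form_k(\varphi,\psi)=\duality{-\tfrac12\varphi''+t\varphi'}{\psi}_{L^2_\nu}+\text{(boundary terms)}.
\]
Since $\psi$ is continuous, the interior boundary term at $t_j$ equals
\[
\tfrac12 G(t_j)\overline{\psi(t_j)}\bigl(g_{\Graph}(t_j-)\varphi'(t_j-)-g_{\Graph}(t_j+)\varphi'(t_j+)\bigr).
\]
It vanishes precisely because of the jump condition $\varphi'(t_j+)=b_j^{-1}\varphi'(t_j-)$. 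The term at $t_k$ vanishes by (c): in the Dirichlet case because $\psi(t_k)=0$, and in the Neumann case because $\varphi'(t_k+)=0$.

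\emph{The term at infinity.} On the terminal half-line, the product $\varphi'\overline{\psi}G$ is integrable. I would show that its derivative $(\varphi''\overline\psi+\varphi'\overline{\psi'}-2t\varphi'\overline\psi)G$ is also integrable, using the weighted estimate $t\varphi'\in L^2_G$ valid for $H^2_G$-functions on the half-line; this is the standard Ornstein--Uhlenbeck domain estimate (cf.\ \cite{Lorenzi2017}). Hence $\varphi'\overline{\psi}G$ has a limit at infinity, and because the function is integrable that limit must be $0$. This shows $\varphi\in\dom{\OUXLevel{k}}$ with $\OUXLevel{k}\varphi=-\tfrac12\varphi''+t\varphi'$. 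I expect this step to be the main obstacle. A fallback is to first prove the identity for $\psi$ with compact support and then use density of compactly supported functions in $\domX{\form_k}$ together with the closedness of the form.

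\emph{Inclusion ``$\subseteq$''.} Conversely, let $\varphi\in\dom{\OUXLevel{k}}$ with $\form_k(\varphi,\psi)=\duality{\chi}{\psi}_{L^2_\nu}$ for all $\psi$ in the form domain. I would first test with $\psi\in C_c^\infty$ supported inside a single open piece. The weight is smooth and positive there, so the one-dimensional regularity argument already used in Proposition~\ref{prop:form-operator} gives the local $H^2_\nu$ regularity in (a) and the identity $\chi=-\tfrac12\varphi''+t\varphi'$ on each piece. Continuity of $\varphi$ holds because it belongs to $H^1_\nu$. Integrating by parts again, as in the first inclusion, the boundary terms at infinity vanish, and what remains is a finite sum of terms of the form $c_j\overline{\psi(t_j)}$ plus, in the Neumann case, a term at $t_k$. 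I would then choose test functions $\psi$ that are piecewise affine near a single point $t_j$ (respectively near $t_k$ in the Neumann case) and vanish near all other points. This makes every coefficient $c_j$ zero, which gives the jump conditions in (b) and the Neumann condition in (c); the Dirichlet condition is built into the form domain. The case $k=N_{\Graph}$ is the same argument with no interior points, which yields the two displayed domains.
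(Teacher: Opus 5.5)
Your overall strategy matches the paper's proof: piecewise integration by parts against $g_{\Graph}(t)G(t)$, the relation $g_{\Graph}(t_j+)=b_j\,g_{\Graph}(t_j-)$ turning the interior boundary terms into the jump conditions, and test functions localised near a single $t_j$ (or $t_k$) to extract (b) and (c). For the term at infinity in the sufficiency direction you argue that $\varphi'\overline{\psi}G$ is integrable with integrable derivative, hence tends to $0$. That is a valid alternative to the paper's compact-support-plus-density argument.

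\textbf{The gap.} It lies in the necessity direction, part (a), on the terminal half-line $[t_{N_\Graph},\infty)$, and therefore also in the two displayed domains for $k=N_\Graph$.

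\emph{Why your argument falls short.} Testing with $C_c^\infty$ functions and using that the weight is smooth and positive only gives $\varphi\in H^2_{\mathrm{loc}}([t_{N_\Graph},\infty))$ together with $\varphi''=2t\varphi'-2\chi$. The drift coefficient $t$ is unbounded and you only know $\varphi'\in L^2_\nu$, so this does not give $\varphi''\in L^2_\nu([t_{N_\Graph},\infty))$, which is what (a) asserts. This is the one genuinely non-trivial point of the proposition. Your claim that the boundary terms at infinity vanish in this direction presupposes the same global regularity. You do not actually need that claim, since your localised test functions have compact support.

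\emph{The missing estimate.} Set $T=t_{N_\Graph}$ and let $w(t)=ce^{-t^2}$ be the density of $\nu$ on $[T,\infty)$. Expand $|\chi|^2=\frac14|\varphi''|^2+t^2|\varphi'|^2-t\,\mathrm{Re}(\varphi''\overline{\varphi'})$. Integrate the cross term by parts on $[T,R]$ using $(tw)'=(1-2t^2)w$. The $t^2|\varphi'|^2$ terms cancel and you obtain
\[
\int_T^R|\chi|^2w\,\mathrm{d}t=\frac14\int_T^R|\varphi''|^2w\,\mathrm{d}t+\frac12\int_T^R|\varphi'|^2w\,\mathrm{d}t-\frac12\Bigl[t|\varphi'(t)|^2w(t)\Bigr]_T^R .
\]
Since $\varphi'\in L^2_w$, there are $R_n\to\infty$ with $R_n|\varphi'(R_n)|^2w(R_n)\to 0$. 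Taking $R=R_n$ and using monotone convergence gives $\frac14\|\varphi''\|^2_{L^2_\nu([T,\infty))}\le\|\chi\|^2_{L^2_\nu([T,\infty))}$.

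This is exactly the paper's Lemma~\ref{lem:reduced-H2-estimate}(b). Alternatively, you could cite the known characterisation of the Ornstein--Uhlenbeck domain on a half-line as the Gaussian-weighted $H^2$ space, just as you already cite the converse estimate $t\varphi'\in L^2_\nu$. What does not suffice is the appeal to a smooth positive weight.
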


Theorem~\ref{thm:operator-decomposition} and Proposition~\ref{prop:domain-reduced-operator} together tell us that the Ornstein--Uhlenbeck operator on a regular rooted tree is reduced to a finite orthogonal sum of one-dimensional Ornstein--Uhlenbeck operators on half-lines, coupled through non-standard transmission conditions at the levels $t_k$.

For the proof of Proposition~\ref{prop:domain-reduced-operator}, we need the following:

\begin{lemma}
    \label{lem:reduced-H2-estimate}
    Consider the notations introduced above.
    \begin{enumerate}[\upshape (a)]
        \item 
        \label{lem:reduced-H2-estimate:itm:sufficient}
        If $\varphi\in H^2_\nu([t_{N_{\Graph}},\infty))$, then
        $
            t\varphi'\in L^2_\nu([t_{N_{\Graph}},\infty))
        $.
    
        \item
        \label{lem:reduced-H2-estimate:itm:necessary}
        Let $k\le N_{\Graph}$ and let $\varphi\in H^1_\nu([t_k,\infty))$ be such that 
        $
            \psi(t)\coloneqq -\frac12 \varphi''(t)+\modulus{t}\varphi'(t) 
        $
        lies in $L^2_\nu(I)$, in the sense of distributions,
        on each interval 
        $
            I\in \left\{[t_{j-1},t_j]: k<j\le N_{\Graph}\right\}\cup\left\{[t_{N_{\Graph}},\infty)\right\}
        $. 
        Then
        $(\varphi\restrict{I})''\in L^2_\nu(I)$ on each such interval. 
        % Consequently, 
        % \[
        %     \int_I \left(\modulus{\varphi''(t)}^2+\modulus{\varphi'(t)}^2+\modulus{\varphi(t)}^2\right)\nu(\dxShort t)<\infty
        % \] 
        % for every such interval $I$.
    \end{enumerate}
\end{lemma}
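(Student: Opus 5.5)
For part (a) I would use a weighted Hardy-type identity on the terminal half-line $I=[T,\infty)$, $T\coloneqq t_{N_{\Graph}}$. There $g_{\Graph}\equiv \halflines$ is constant, so $\nu$ is a constant multiple of $G(t)\dx t$ and we may work with $G$ alone. The starting point is the identity $(G)'=-2tG$, applied as in Lemma~\ref{lem:integration-by-parts}. First take $\varphi\in H^2_\nu(I)$ with $\varphi'$ compactly supported in $I$ (or truncate with a cutoff $\chi_R$). Integrating by parts,
\[
    \int_T^\infty 2t\,\modulus{\varphi'}^2 t\, G\dx t
    = -\int_T^\infty t\,\modulus{\varphi'}^2 \,G'\dx t
    = T\modulus{\varphi'(T)}^2G(T) + \int_T^\infty \bigl(\modulus{\varphi'}^2 + 2t\,\re(\varphi''\overline{\varphi'})\bigr)G\dx t .
\]
The boundary term is finite by the Sobolev trace on $[T,T+1]$. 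For the last term, Young's inequality gives $2t\modulus{\varphi''}\modulus{\varphi'}\le t^2\modulus{\varphi'}^2+\modulus{\varphi''}^2$. Absorbing half of the left-hand side then yields
\[
    \int t^2\modulus{\varphi'}^2G \le C\bigl(\norm{\varphi}_{H^2_\nu}^2 + \modulus{\varphi'(T)}^2\bigr),
\]
where $T\ge0$ ensures that the weight $t$ is nonnegative.

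To remove the compact-support assumption, I would apply the estimate to $\chi_R\varphi$ with $\chi_R$ a standard cutoff, so that $\modulus{\chi_R'}\le C/R$. The error terms are bounded by $\norm{\varphi}_{H^2_\nu}$ uniformly in $R$, and Fatou's lemma then gives $t\varphi'\in L^2_\nu$. I expect this limiting argument to be the only delicate step. The point to watch is that the right-hand side must not involve $t\varphi'$ itself, and the absorption has to happen before the limit is taken; the cutoff version handles this because all terms are finite for each fixed $R$.

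For part (b), on a bounded interval $I=[t_{j-1},t_j]$ the weight $\nu$ is comparable to Lebesgue measure and $\modulus{t}\le t_j$ is bounded. Hence $\varphi''=2(\modulus t\varphi'-\psi)\in L^2(I)$ directly, since $\varphi'\in L^2_\nu(I)$.

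On the half-line, the distributional identity $\varphi''=2t\varphi'-2\psi$ first gives $\varphi\in H^2_{\mathrm{loc}}$. Since $(\varphi'G)'=(\varphi''-2t\varphi')G=-2\psi G$, I would multiply by $t\overline{\varphi'}$, or more simply use the energy identity: test $-\tfrac12(\varphi'G)'=\psi G$ against $t^2$-weighted quantities. The cleaner route is to set $w\coloneqq\varphi'$, so that $w'-2tw=-2\psi$. I would multiply by $\overline{w'}$ and integrate against $\chi_R^2G$:
\[
    \int\modulus{w'}^2\chi_R^2G-\int 2t\,w\,\overline{w'}\chi_R^2G=-2\int\psi\overline{w'}\chi_R^2G .
\]
The cross term equals $-\int t(\modulus w^2)'\chi_R^2G$. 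Integrating it by parts produces
\[
    \int\modulus w^2\bigl(\chi_R^2G+2t\chi_R\chi_R'G-2t^2\chi_R^2G\bigr)
\]
plus a boundary term at $T$. This gives
\[
    \int(\modulus{w'}^2+2t^2\modulus w^2)\chi_R^2G\le C\bigl(\norm\psi^2+\norm w^2+\modulus{w(T)}^2\bigr)
\]
plus cutoff errors that are absorbed, after handling $2t\chi_R\chi_R'$, which is bounded by $C\chi_R$ on the support. Letting $R\to\infty$ yields $\varphi''\in L^2_\nu$ (together with $t\varphi'\in L^2_\nu$). Here $w(T)$ is finite by local $H^2$ regularity near $T$. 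The main obstacle is again justifying the integrations by parts at infinity, which the cutoff handles.
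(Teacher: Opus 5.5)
Your part (a) and the bounded-interval case of (b) are correct and essentially the paper's argument: the same integration by parts using $G'=-2tG$, then Young's inequality and absorption. The only difference is that the paper integrates over $[T,R]$ and drops the boundary term at $R$, which is nonpositive, so it needs no cutoff. The half-line case of (b), however, has a sign error that breaks the argument. Take real parts after multiplying $w'-2tw=-2\psi$ by $\overline{w'}\chi_R^2G$, and integrate the cross term by parts. As your own display shows, this produces the contribution $-2\int t^2\modulus{w}^2\chi_R^2G$, so the identity actually reads
\[
\int\modulus{w'}^2\chi_R^2G+T\modulus{w(T)}^2G(T)+\int\modulus{w}^2\bigl(\chi_R^2+2t\chi_R\chi_R'\bigr)G-2\int t^2\modulus{w}^2\chi_R^2G=-2\re\int\psi\overline{w'}\chi_R^2G .
\]
The term $2t^2\modulus{w}^2$ therefore enters with the opposite sign to the one in your claimed estimate. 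This identity bounds $\int\modulus{w'}^2\chi_R^2G$ only in terms of $\int t^2\modulus{w}^2\chi_R^2G$, which is precisely the unknown quantity. Part (a) supplies $t\varphi'\in L^2_\nu$ only after $\varphi''\in L^2_\nu$ is known, so appealing to it here would be circular. Hence the inequality $\int(\modulus{w'}^2+2t^2\modulus{w}^2)\chi_R^2G\le C(\dots)$ does not follow, and neither does the conclusion $\varphi''\in L^2_\nu$.

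The missing idea is to choose a multiplier for which the $t^2\modulus{w}^2$ term cancels or has a good sign. The paper expands the square
\[
4\modulus{\psi}^2=\modulus{w'-2tw}^2=\modulus{w'}^2-4t\re(w'\overline{w})+4t^2\modulus{w}^2 .
\]
Integrating the middle term by parts with $(tG)'=(1-2t^2)G$ produces $-4t^2\modulus{w}^2$, which cancels the last term exactly. In your cutoff form this leaves
\[
\int\modulus{w'}^2\chi_R^2G+2\int\modulus{w}^2\chi_R^2G+2T\modulus{w(T)}^2G(T)=4\int\modulus{\psi}^2\chi_R^2G-4\int t\chi_R\chi_R'\modulus{w}^2G ,
\]
where the last term is bounded by $C\int_R^{2R}\modulus{w}^2G\to0$. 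The paper works on $[T,R]$ instead and handles the term at $R$ via $\liminf_{R\to\infty}R\modulus{\varphi'(R)}^2G(R)=0$, which follows from $\varphi'\in L^2_\nu$. Alternatively, the idea you mention first, multiplying by $t\overline{w}$, also works. It yields $+\int t^2\modulus{w}^2\chi_R^2G$ on the left, and Young's inequality gives $\frac12\int t^2\modulus{w}^2\chi_R^2G\le 2\int\modulus{\psi}^2G+C\int\modulus{w}^2G$ uniformly in $R$. From this, $w'=2tw-2\psi\in L^2_\nu$ follows.
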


\begin{proof}
    For convenience, we write $T\coloneqq t_{N_{\Graph}}$. Since the branching function $g_{\Graph}$ is piecewise constant, there exists $c>0$ such that
    the reduced measure has the form
    \[
        \nu(\mathrm dt)= c e^{-t^2}\dxShort t\eqqcolon w(t)\dxShort t,
        \quad \text{on}\quad
        [T,\infty).
    \]

    (a) By assumption, $\varphi', \varphi'' \in L^2_{\nu}([T,\infty))$, and in turn, $\varphi'\in H^1_{\mathrm{loc}}([T,\infty))$. Hence, using 
    $                       \left(\modulus{\varphi'}^2\right)'=2\re(\varphi''\overline{\varphi'})
    $
    and $(tw(t))'= (1-2t^2)w(t)$,
    the following integration by parts is
    justified for each $R>T$:
    \begin{equation}
        \label{eq:integration-by-parts-h2-estimate}
        \begin{aligned}
            \int_T^R t\re(\varphi''\overline{\varphi'})(t) w(t)\dx t
            &=
            \frac12\int_T^R tw(t) \left(\modulus{\varphi'(t)}^2\right)' \dx t\\
            &=
            \frac12\left[t\modulus{\varphi'(t)}^2w(t) \right]_T^R
            -
            \frac12\int_T^R (tw(t) )'\modulus{\varphi'(t)}^2\dx t\\
            &=
            \frac12\left[t\modulus{\varphi'(t)}^2w(t) \right]_T^R
            -
            \frac12\int_T^R \modulus{\varphi'(t)}^2w(t)\dx t
            +
            \int_T^R t^2\modulus{\varphi'(t)}^2w(t)\dx t.
        \end{aligned}
    \end{equation}
    Equivalently, 
    \begin{align*}
        \int_T^R t^2\modulus{\varphi'(t)}^2w(t)\dx t
        &=
        \int_T^R t\re(\varphi''\overline{\varphi'})(t) w(t)\dx t
        -
        \frac12\left[t\modulus{\varphi'(t)}^2w(t) \right]_T^R
        +
        \frac12\int_T^R \modulus{\varphi'(t)}^2w(t)\dx t\\
        &\le
        \int_T^R t\modulus{\varphi''\varphi'}(t) w(t)\dx t
        +
        \frac12T\modulus{\varphi'(T+)}^2w(T)
        +
        \frac12\int_T^R \modulus{\varphi'(t)}^2w(t)\dx t.
    \end{align*}
    By Young's inequality $ab \le 2^{-1}(a^2+b^2)$, applied pointwise with $a=\modulus{\varphi''(t)}$ and $b=t\modulus{\varphi'(t)}$,
    we therefore obtain,
    \[
        \int_T^R t^2\modulus{\varphi'(t)}^2w(t)\dx t
        \le
        \int_T^R \modulus{\varphi''(t)}^2w(t)\dx t
        +
        T\modulus{\varphi'(T+)}^2w(T)
        +
        \int_T^R \modulus{\varphi'(t)}^2w(t)\dx t.
    \]
    Recalling $\varphi', \varphi'' \in L^2_{\nu}([T,\infty))$ and letting $R\to \infty$, it follows that
    $t\varphi'\in L^2_\nu([T,\infty))$.

    (b) Let $I$ be one of the intervals from the assumption. 
     To keep the notation light, we write again
    $\varphi$ and $\psi$ for the restrictions of these functions to $I$;
    all derivatives and Sobolev spaces in the next paragraphs are understood
    on $I$.
    
    Since
    $
        \varphi''(t)=-2\psi(t)+2\modulus{t}\varphi'(t)
    $
    on $I, \varphi'\in L^2_\nu(I)$, and $\modulus{t}$ is bounded on each compact interval $[t_{j-1},t_j]$, it follows immediately that $\varphi''\in L^2_\nu([t_{j-1},t_j])$.

    It remains to treat the terminal half-line
    $
        [T,\infty)=[t_{N_{\Graph}},\infty)
    $.
    First, observe that $\varphi'\in H^1_{\mathrm{loc}}([T,\infty))$. Indeed, on
    every compact interval $[T,R]$, the function $t$ is bounded, and
    $
        \varphi''=-2\psi+2t\varphi' \in L^2([T,R]),
    $
    as a distribution.
    Hence, the integration by parts in~\eqref{eq:integration-by-parts-h2-estimate} is
    justified on $[T,R]$. Coupling it with
    \begin{align*}
        \modulus{\psi(t)}^2 
        =
        \modulus{-\frac12 \varphi''+t\varphi'}^2 
        =
        \frac14 \modulus{\varphi''(t)}^2
        +
        t^2\modulus{\varphi'(t)}^2
        -
        t\re(\varphi''\overline{\varphi'} )(t),
    \end{align*}
    we obtain
    \[
        \int_T^R \modulus{\psi(t)}^2 \nu(\dxShort t) 
        =
        \frac14\int_T^R \modulus{\varphi''(t)}^2 w(t)\dx t
        +
        \frac12\int_T^R \modulus{\varphi'(t)}^2w(t)\dx t
        -
        \frac12\left[t\modulus{\varphi'(t)}^2w(t) \right]_T^R.
    \]
    Since $\varphi'\in L^2_\nu([T,\infty))$, we have
    $
        \liminf_{R\to\infty} R \modulus{\varphi'(R)}^2 w(R) = 0
    $.
    As a result, 
    \[
        \frac14\int_T^\infty \modulus{\varphi''(t)}^2 w(t)\dx t
        +
        \frac12\int_T^\infty \modulus{\varphi'(t)}^2w(t)\dx t
        +
        \frac12T\modulus{\varphi'(T+)}^2w(T) 
        \le 
        \int_T^\infty \modulus{\psi}^2 \nu(\dxShort t) <\infty
    \]
    In particular, 
    $
        \varphi''\in L^2_\nu([T,\infty))
    $.
\end{proof}

\begin{proof}[Proof of Proposition~\ref{prop:domain-reduced-operator}]
    The characterisation for $k=N_{\Graph}$ follows from Lemma~\ref{lem:reduced-H2-estimate} together with the definition of the operators $\OUDirLevel{N_{\Graph}}$ and $\OUNeuLevel{N_{\Graph}}$, since in that case there are no interior transmission points. So, fix $k<N_{\Graph}$. 
    
    \emph{Necessity}. 
    Since $\OUXLevel{k}$ is the operator associated with the closed form $(\form_k, \domX{\form_k})$, each 
    $\varphi \in \dom{\OUXLevel{k}}$ satisfies $\varphi\in \domX{\form_k}$ 
    and there exists $\psi \in L^2_{\nu}([t_{k},\infty))$ such that
    \begin{equation}
        \label{eq:weak-reduced-X}
        \form_k(\varphi, \phi) 
        = \duality{\psi}{\phi}_{L^2_{\nu}([t_{k},\infty))} 
        \qquad \text{for all }\phi \in \domX{\form_k}.
    \end{equation}

    \ref{prop:domain-reduced-operator:itm:local-integrability} 
    Choose test functions $\phi$ compactly supported inside a single 
    interval $(t_{j-1}, t_j)$ with $k<j\le N_{\Graph}$ or inside the half-line 
    $(t_{N_{\Graph}}, \infty)$. For such $\phi$, the boundary terms at the 
    internal nodes vanish and integration by parts~\eqref{eq:integration-by-parts-gaussian}
    yields distributionally that 
    \[
      -\tfrac{1}{2} \varphi''(t) + \modulus{t}\,\varphi'(t) = \psi(t) \in L^2_{\nu}
    \]
    on each interval $(t_{j-1},t_j)$ and on the half-line $(t_{N_{\Graph}}, \infty)$. Thus the hypothesis of Lemma~\ref{lem:reduced-H2-estimate}\ref{lem:reduced-H2-estimate:itm:necessary} is satisfied.
    So, $\varphi''\in L^2_\nu(I)$ on every interval $I$
    under consideration. Since $\varphi\in\domX{\form_k}\subset
    H^1_\nu([t_k,\infty))$, the $L^2_\nu$-integrability of
    $\varphi$ and $\varphi'$ is already known. So, $\varphi$ has the asserted piecewise $H^2_\nu$-regularity.

    \ref{prop:domain-reduced-operator:itm:continuity-and-transmission}
    Next, we test \eqref{eq:weak-reduced-X} with functions $\phi\in \domX{\form_k}$ supported in a small neighbourhood of some transmission point $t_j$, where $k<j\le N_{\Graph}$. Once again, integrating by parts, on the adjacent intervals and collecting the boundary terms, we obtain
    \begin{equation}
        \label{eq:bdry-terms}
        \begin{split}
            2\duality{\psi}{\phi}_{L^2_{\nu}([t_{k},\infty))} 
              = \sum_{i=k+1}^{N_{\Graph}} 
                 & g_{\Graph}(t_i-) \Big( \varphi'(t_i-)-b_i\varphi'(t_i+)\Big) 
                 \phi(t_i)G(t_i)  \\
              &- g_\Graph(t_k^+)\,\varphi'(t_k^+)\,\phi(t_k)G(t_k)
                 +2 \duality{\psi}{\phi}_{L^2_{\nu}([t_{k},\infty))}.
        \end{split}
    \end{equation}
    From this, the interior jump conditions follow from the vanishing 
    of the coefficients of $\phi(t_j)$; note that $\phi(t_i)=0$ whenever $i\ne j$.
    Continuity on $[t_k, \infty)$ holds since the form domain contains the continuity condition at each level.

    \ref{prop:domain-reduced-operator:itm:boundary-condition}
    The condition at $t_k$ depends on $\mathrm X$. If $\mathrm X=\Dirichlet$, then $\varphi\in H^1_{0,\nu}([t_k,\infty))$, so $\varphi(t_k)=0$. If $\mathrm X=\Neumann$, then~\eqref{eq:bdry-terms} with $\phi$ supported near $t_k$ forces
    \[
        g_\Graph(t_k^+)\,\varphi'(t_k^+) = 0 \text{ or equivalently }\varphi'(t_k^+) = 0.
    \]

    \emph{Sufficiency}. 
    Conversely, let $\varphi\in L^2_{\nu}([t_k, \infty))$ satisfy conditions~\ref{prop:domain-reduced-operator:itm:local-integrability}--\ref{prop:domain-reduced-operator:itm:boundary-condition}.
    By the assumed piecewise $H^2_\nu$-regularity
    and the continuity condition, we have
    $
        \varphi\in H^1_\nu([t_k,\infty))
    $.
    Moreover, in the Dirichlet case, condition~\ref{prop:domain-reduced-operator:itm:boundary-condition} ensures
    $\varphi(t_k)=0$,
    $
        \varphi\in H^1_{0,\nu}([t_k,\infty))
    $. In either case, $\varphi \in \domX{\form_k}$.
    Define
    \[
        \psi(t):= -\tfrac{1}{2} \varphi''(t) + \modulus{t}\,\varphi'(t),
    \]
    pointwise on each open interval. Local integrability condition and Lemma~\ref{lem:reduced-H2-estimate}\ref{lem:reduced-H2-estimate:itm:sufficient} together ensure that $\psi\in L^2_\nu([t_k,\infty))$. Let $\phi \in \domX{\form_k}$ with compact support.
    Integrating by parts, using the interior transmission conditions, and the boundary contribution at $t_k$, one can check that~\eqref{eq:weak-reduced-X} holds for all 
    $\phi \in \domX{\form_k}$ with compact support, and in turn for all $\phi \in \domX{\form_k}$. 
    As a result, $\varphi \in\dom{\OUXLevel{k}}$.
\end{proof}

\subsection{Spectral results}

We now exploit the operator decomposition obtained in Theorem~\ref{thm:operator-decomposition} to derive spectral properties of the Ornstein--Uhlenbeck operator on regular rooted trees. All spectral information reduces to that of the one-dimensional operators $\OUXLevel{k}$, combined with the multiplicities induced by the branching structure. 

We first establish a monotonicity property for the
reduced operators, which reflects the fact that moving the boundary point
further away enlarges the domain. We then combine this with a comparison
principle for weighted Gaussian measures to obtain lower bounds on the
eigenvalues of each reduced operator. Finally, we lift these one-dimensional
estimates to the full operator using the orthogonal decomposition and counting
arguments based on eigenvalue multiplicities.

Let $\Graph$ be a regular rooted tree and let $\OUDirLevel{k}$ and $\OUNeuLevel{k}$ respectively denote the reduced operator from the operator decomposition of $\OUDir$ and $\OUNeu$ from Theorem~\ref{thm:operator-decomposition}.
In what follows, for each $k\in \{0,\ldots,N_{\Graph}\}$ we denote the eigenvalues of $\OUDirLevel{k}$ and $\OUNeuLevel{k}$ (ordered increasingly) by
\[
    \lambda_n^{\Dirichlet, (k)}
    \quad \text{and}\quad
    \lambda_n^{\Neumann, (k)},
    \qquad (n\in \bbN)
\]
respectively.

The monotonicity property (Proposition~\ref{prop:monotonicity-reduced-spectrum} below) is a consequence of the following general domain monotonicity principle for
one-dimensional Dirichlet forms on half-lines:

\begin{lemma}
    \label{lem:half-line-domain-monotonicity}
    Let $0\le a<b$ and
    let $\eta$ be a positive Radon measure with full support on $[a,\infty)$ that is absolutely continuous with respect to Lebesgue measure.
    For $c\in\{a,b\}$, consider the form
    \[
        \mathfrak q_c(u)=\frac12\int_c^\infty \modulus{u'(t)}^2\,\eta(\dxShort t)
    \]
 with domain
    \[
        \text{either}\quad \domDir{\mathfrak q_c}\coloneqq H^1_{0,\eta}([c,\infty))
        \quad\text{or}\quad
        \domNeu{\mathfrak q_c}\coloneqq H^1_{\eta}([c,\infty)).
    \]
    Assume that these forms are closed and that, for each
    $c\in\{a,b\}$, the embedding
    $
        H^1_\eta([c,\infty))
        \hookrightarrow
        L^2_\eta([c,\infty))
    $
    is compact. Then the associated self-adjoint operators have compact
    resolvent.
     Denote the eigenvalues corresponding to $(\mathfrak q_c, \domDir{\mathfrak q_c})$ and $(\mathfrak q_c, \domNeu{\mathfrak q_c})$ (ordered increasingly) by 
    $\big(\lambda_n^{\Dirichlet}(c)\big)_{n\in\bbN}$ and $\big(\lambda_n^{\Neumann}(c)\big)_{n\in\bbN}$, respectively. Then
    \[
        \lambda_n^{\Dirichlet}(a)\le \lambda_n^{\Dirichlet}(b)
        \quad\text{and}\quad
        \lambda_n^{\Neumann}(a)\le \lambda_n^{\Neumann}(b)
        \quad \text{for all }n\in\bbN.
    \]
    
    In particular, if $\eta$ is the Gaussian measure centered at $0$, then $\lambda_n^{\Dirichlet}(a)\ge 2n-1$ and $\lambda_n^{\Neumann}(a)\ge 2(n-1)$ for all $n\in\bbN$.
\end{lemma}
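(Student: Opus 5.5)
The plan is to prove the three assertions in order: compactness of the resolvent, the monotonicity of the eigenvalues, and finally the explicit Gaussian lower bounds.

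\emph{Compact resolvent.} Both forms $(\mathfrak q_c,\domDir{\mathfrak q_c})$ and $(\mathfrak q_c,\domNeu{\mathfrak q_c})$ are closed, symmetric and non-negative. Their form norm is exactly the $H^1_\eta$-norm, and $H^1_{0,\eta}([c,\infty))$ is a closed subspace of $H^1_\eta([c,\infty))$. Hence the assumed compact embedding $H^1_\eta\hookrightarrow L^2_\eta$ makes both form domains compactly embedded in $L^2_\eta([c,\infty))$. The standard form--operator correspondence, as in Corollary~\ref{cor:compact-resolvent}, then gives compact resolvent and a discrete spectrum $\lambda_1\le\lambda_2\le\cdots$. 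This spectrum is described by the min--max principle, exactly as in Step~1 of the proof of Proposition~\ref{prop:linear-asymptotics-general}.

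\emph{Monotonicity.} I would build, in each case, an injective linear map $E\colon\dom{\mathfrak q_b}\to\dom{\mathfrak q_a}$ with
\[
    \mathfrak q_a(Eu)=\mathfrak q_b(u)
    \quad\text{and}\quad
    \norm{Eu}_{L^2_\eta([a,\infty))}\ge\norm{u}_{L^2_\eta([b,\infty))}.
\]
\begin{itemize}
    \item In the Dirichlet case, take $E$ to be extension by zero on $[a,b)$. Since $u(b)=0$, the extension is continuous and lies in $H^1_{0,\eta}([a,\infty))$. Its derivative vanishes on $(a,b)$, and its norm is unchanged.
    \item In the Neumann case, take $Eu\coloneqq u(b)$ on $[a,b)$, i.e.\ extension by the constant value $u(b)$. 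This is again continuous and in $H^1_\eta([a,\infty))$, because $\eta$ is locally finite. The form value is unchanged since the derivative vanishes on $(a,b)$, while the $L^2_\eta$-norm can only increase.
\end{itemize}
In both cases the Rayleigh quotients satisfy $R_a(Eu)\le R_b(u)$, using that $\mathfrak q_b(u)\ge0$. Any $n$-dimensional subspace $M\subset\dom{\mathfrak q_b}$ is mapped onto the $n$-dimensional subspace $E(M)$, so
\[
    \lambda_n(a)\le\sup_{E(M)}R_a\le\sup_M R_b .
\]
Taking the infimum over $M$ gives $\lambda_n(a)\le\lambda_n(b)$.

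\emph{Gaussian lower bounds.} Now let $\eta$ be the Gaussian measure, with any normalising constant since it does not affect eigenvalues. The compact embedding hypothesis holds by the Gaussian tightness argument already used in Proposition~\ref{prop:form-properties}.

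At $c=0$ the spectrum can be computed explicitly using the Hermite polynomials $H_m$, which satisfy $-\tfrac12H_m''+xH_m'=mH_m$. The polynomials of a fixed parity form an orthogonal basis of $L^2_\eta([0,\infty))$, by restricting the completeness of Hermite polynomials in $L^2(\bbR,e^{-x^2}\dx x)$ to odd, respectively even, functions.
\begin{itemize}
    \item The odd $H_m$ vanish at $0$ and, by Proposition~\ref{prop:domain-reduced-operator} with no transmission points, belong to the Dirichlet domain. This gives Dirichlet eigenvalues $1,3,5,\dots$, i.e.\ $\lambda_n^{\Dirichlet}(0)=2n-1$.
    \item The even $H_m$ satisfy $H_m'(0)=0$ and belong to the Neumann domain. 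This gives $\lambda_n^{\Neumann}(0)=2(n-1)$.
\end{itemize}
Since these families are complete, they exhaust the spectrum.

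For $a>0$, apply the monotonicity just proved with the pair $0<a$; for $a=0$ there is nothing to prove. This yields $\lambda_n^{\Dirichlet}(a)\ge2n-1$ and $\lambda_n^{\Neumann}(a)\ge2(n-1)$.

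The main point needing care is the completeness claim, namely that no eigenvalues other than the Hermite ones occur. I would justify it via the completeness of the Hermite polynomials in $L^2(\bbR,e^{-x^2}\dx x)$ together with the even/odd reflection identification $L^2_\eta([0,\infty))\cong L^2_{\mathrm{odd}}\cong L^2_{\mathrm{even}}$.
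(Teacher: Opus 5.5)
Your proof is correct and follows the paper's argument: the same two extension operators (by zero in the Dirichlet case, by the constant $u(b)$ on $[a,b)$ in the Neumann case), the same Rayleigh-quotient comparison inserted into the min--max principle, and then monotonicity from $0$ to $a$ for the Gaussian bounds. The only difference is that you derive $\lambda_n^{\Dirichlet}(0)=2n-1$ and $\lambda_n^{\Neumann}(0)=2(n-1)$ yourself, using odd and even Hermite polynomials with a reflection argument for completeness, whereas the paper simply cites Lemma~4.7 of \cite{MugnoloRhandi2022}.
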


\begin{proof}
    Let $a<b$ and consider the extension operator
    \[
        E : H^1_{\eta}([b,\infty)) \to H^1_{\eta}([a,\infty)),
        \qquad
        (E u)(t) \coloneqq 
        \begin{cases}
            u(b), & t\in [a,b),\\
            u(t), & t\ge b.
        \end{cases}
    \]
    Then $E$ is linear and injective, and for all $u\in H^1_{\eta}([b,\infty))$ one has
    \[
        \mathfrak q_a(E u,E u)=\mathfrak q_b(u,u),
        \quad\text{and}\quad
        \norm{E u}_{L^2_\eta([a,\infty))}^2 \ge \norm{u}_{L^2_\eta([b,\infty))}^2.
    \]
    Consequently, the Rayleigh quotients
    \[
        R_a(u) \coloneqq \frac{\mathfrak q_a(u,u)}{\norm{u}_{L^2_\eta([a,\infty))}^2},
        \qquad u \neq 0,
    \]
    satisfy
    \begin{equation}
        \label{eq:rayleigh-quotient-abstract}
        R_a(E u) \le R_b(u)
        \quad \text{for all } 0\neq u\in H^1_{\eta}([b,\infty)).
    \end{equation}

    Next, let $\calS_n(a)$ denote the family of all $n$-dimensional subspaces of $H^1_\eta([a,\infty))$ and set
    \[
        \calT_n(a) := \left\{ E (L) : L \in \calS_n(b) \right\} \subseteq \calS_n(a).
    \]
    By the min-max principle,
    \begin{align*}
        \lambda_n^{\Neumann}(a)
        & = \inf_{M \in \calS_n(a)} \sup_{0\ne u\in M} R_a (u) 
         \le \inf_{M \in \calT_n(a)} \sup_{0\ne u\in M} R_a (u) \\
        & = \inf_{L \in \calS_n(b)} \sup_{0\ne u\in E(L)} R_a (u)\qquad (\text{by definition of } \calT_n(a)) \\
        & = \inf_{L \in \calS_n(b)} \sup_{0\ne u\in  L} R_a (E u) \qquad (\text{using injectivity of } E)\\
        & \le \inf_{L \in \calS_n(b)} \sup_{0\ne u\in  L} R_b (u)
        = \lambda_n^{\Neumann}(b),
    \end{align*}
    where we have used $\calT_n(a)\subseteq \calS_n(a)$ and~\eqref{eq:rayleigh-quotient-abstract}, respectively, for the two inequalities.

    The Dirichlet case is proved analogously, replacing $Eu$ on $[a,b)$ by $0$, in which case the Rayleigh quotient is preserved exactly.

    The last assertion follows by recalling that $\lambda_n^{\Dirichlet}(0)=2n-1$ and $\lambda_n^{\Neumann}(0)=2(n-1)$ for all $n\in \bbN$, see e.g., \cite[Lemma~4.7]{MugnoloRhandi2022}.
\end{proof}

\begin{proposition}
    \label{prop:monotonicity-reduced-spectrum}
    For each $n\in \bbN$, we have
    \[
        \lambda_n^{\Dirichlet, (0)} \le \lambda_n^{\Dirichlet, (1)} \le \ldots \le \lambda_n^{\Dirichlet, (N_{\Graph})}
        \quad\text{and}\quad
        \lambda_n^{\Neumann, (0)} \le \lambda_n^{\Neumann, (1)} \le \ldots \le \lambda_n^{\Neumann, (N_{\Graph})}.
    \]
\end{proposition}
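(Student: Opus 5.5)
The plan is to apply Lemma~\ref{lem:half-line-domain-monotonicity} with $\eta\coloneqq\nu$ from \eqref{eq:measure-nu}, and with $a=t_k$, $b=t_{k+1}$ for each $k\in\{0,\ldots,N_{\Graph}-1\}$. Chaining the resulting inequalities $\lambda_n^{\mathrm X,(k)}\le\lambda_n^{\mathrm X,(k+1)}$ for $\mathrm X\in\{\Dirichlet,\Neumann\}$ then gives both asserted chains.

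The first step is to check that the operators $\OUXLevel{k}$ really are the operators associated with the forms $\mathfrak q_{t_k}$ of the lemma for this $\eta$. By definition, $\OUXLevel{k}$ is associated with $\form_k=\frac12\int_{t_k}^\infty \varphi'\overline{\psi'}\,\nu(\dxShort t)$ on $H^1_{0,\nu}([t_k,\infty))$, respectively $H^1_\nu([t_k,\infty))$. These are exactly $\mathfrak q_{t_k}$ with $\eta=\nu$ restricted to $[t_k,\infty)$.

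The second step is to verify the hypotheses on $\nu$. It is positive and absolutely continuous, with density $g_{\Graph}(t)G(t)$. Since $g_{\Graph}\ge 1$ on $[0,\infty)$ (because $b_k\ge1$), $\nu$ has full support on $[0,\infty)$, hence on $[t_k,\infty)$. Closedness of the forms was already recorded when the reduced forms were introduced.

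The third step, which I expect to be the main point needing care, is the compact embedding $H^1_\nu([c,\infty))\hookrightarrow L^2_\nu([c,\infty))$ for $c=t_k$. I would obtain it by comparison with the Gaussian case. The density $g_{\Graph}$ is piecewise constant and takes values in $[1,\halflines]$, so $\nu$ is comparable to $G(t)\dx t$ with constants $1$ and $\halflines$. Hence $H^1_\nu$ and $L^2_\nu$ coincide with $H^1_\mu$ and $L^2_\mu$ on the half-line (Gaussian weight), with equivalent norms. The compactness then follows from the half-line argument in Proposition~\ref{prop:form-properties}: Rellich on $[c,R]$ plus the Gaussian tail estimate from \cite[Theorem~10.3.17]{Lorenzi2017}.

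Alternatively, compactness of the resolvent of $\OUXLevel{k}$ can be read off from Theorem~\ref{thm:operator-decomposition}. Each $\OUXLevel{k}$ is (unitarily equivalent to) an orthogonal summand of $\OU^{\mathrm X}$, and $\OU^{\mathrm X}$ has compact resolvent by Corollary~\ref{cor:compact-resolvent}. A caveat applies when $b_k=1$ for $k\ge1$: then the summand does not appear, which is why I prefer the direct comparison argument.

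With these hypotheses in place, the min--max extension argument of Lemma~\ref{lem:half-line-domain-monotonicity} applies for each consecutive pair $t_k<t_{k+1}$. The only property used there is that $E$ preserves the form and does not decrease the $L^2_\eta$-norm, and this holds for any $\eta$.
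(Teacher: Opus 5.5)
Your proposal is correct and follows the same route as the paper. The paper's proof simply applies Lemma~\ref{lem:half-line-domain-monotonicity} with $\eta=\nu$, $a=t_k$, $b=t_{k+1}$ for each $k\in\{0,\ldots,N_{\Graph}-1\}$ and chains the resulting inequalities. Your additional checks fill in details the paper leaves implicit: full support of $\nu$, and the compact embedding obtained from the comparability $1\le g_{\Graph}\le \halflines$ with the Gaussian weight. You are also right that this direct comparison is preferable to reading compactness off Theorem~\ref{thm:operator-decomposition}, since that would miss levels with $b_k=1$.
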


\begin{proof}
    Fixing $k\in \{0,\ldots,N_{\Graph}-1\}$, the assertion follows by applying Lemma~\ref{lem:half-line-domain-monotonicity} with $\eta=\nu, a=t_k$, and $b=t_{k+1}$.
\end{proof}

\begin{remark}
    \label{rem:rayleigh-comparison}
    Fix $a\ge0$ and let $\eta_g$ be the measure on $[a,\infty)$
    given by
    \[
        \eta_g(\dxShort t)
        =
        g(t)G(t)\dx t
        =
        g(t)\frac{2}{\sqrt\pi}e^{-t^2}\dx t,
    \]
    where $g$ is a measurable function satisfying
    $
        0< c \le g(t) \le C< \infty
    $
    for all $t\ge a$.
    Then for every $\varphi\in H^1_{\eta_g}([a,\infty))$,
    \[
        \frac{\int_a^\infty \modulus{\varphi'(t)}^2\eta_g(\dxShort t)}
             {\int_a^\infty \modulus{\varphi(t)}^2\eta_g(\dxShort t)}
        \ge
        \frac{c}{C}
        \frac{\int_a^\infty \modulus{\varphi'(t)}^2G(t)\dx t}
             {\int_a^\infty \modulus{\varphi(t)}^2G(t)\dx t}.
    \]
    In particular, by the min-max principle, the eigenvalues of the
    operator associated with the form
    \[
        \frac12\int_a^\infty \modulus{\varphi'(t)}^2\eta_g(\dxShort t)
    \]
    (with either Dirichlet or Neumann boundary condition at $t=a$) are
    bounded from below by $c\,C^{-1}$ times the corresponding
    eigenvalues for the one-dimensional Gaussian measure $G(t)\dx t$.
\end{remark}

We now combine the operator decomposition with the previous comparison
principles to obtain global spectral bounds for $\OUDir$ and $\OUNeu$.

\begin{proposition}
    \label{prop:spectrum-refined}
    Enumerate the eigenvalues of $\OUDir$ and $\OUNeu$, repeated according to multiplicity, as
    \[
        0<\lambda_1^{\Dirichlet}\le \lambda_2^{\Dirichlet}\le \ldots 
        \quad \text{and}\quad
        0=\lambda_1^{\Neumann}<\lambda_2^{\Neumann}\le \lambda_3^{\Neumann}\le \ldots
    \]
    respectively. For each $q\in \bbN_0$, we have
    \[
        \lambda_{(q+1)\halflines}^{\Dirichlet} \ge \ldots \ge \lambda_{q\,\halflines+1}^{\Dirichlet} \ge \frac{(2q+1)b_0}{\halflines}
    \]
    as well as
    \[
        \lambda_{q\,\halflines+1}^{\Neumann} \ge \frac{2q\cdot b_0}{\halflines}
        \quad\text{and}\quad
        \lambda_{(q+1)\halflines}^{\Neumann} \ge \ldots \ge \lambda_{q\,\halflines+2}^{\Neumann} \ge \frac{(2q+1)b_0}{\halflines};
    \]
    where $\halflines= b_0b_1\cdots b_{N_{\Graph}}$.
\end{proposition}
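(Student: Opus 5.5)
Our plan is to combine the unitary equivalence of Theorem~\ref{thm:operator-decomposition} with a counting argument, using a uniform lower bound on the eigenvalues of every reduced operator. The first step is such a lower bound. On $[t_k,\infty)$ the weight $g_{\Graph}$ is piecewise constant with values between $g_{\Graph}(t_k+)\ge b_0$ and $\halflines$; in fact $g_{\Graph}\ge b_0$ everywhere, since already on $[0,t_1)$ there are $b_0$ points at each distance. Remark~\ref{rem:rayleigh-comparison} with $c=b_0$ and $C=\halflines$ then bounds the eigenvalues of $\OUDirLevel{k}$ and $\OUNeuLevel{0}$ from below by $b_0/\halflines$ times the corresponding eigenvalues for the one-dimensional Gaussian form on $[t_k,\infty)$. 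By Lemma~\ref{lem:half-line-domain-monotonicity} (its last assertion, with $a=t_k$), the latter are at least $2n-1$ in the Dirichlet case and $2(n-1)$ in the Neumann case. This gives
\[
    \lambda_n^{\Dirichlet,(k)}\ge \frac{(2n-1)b_0}{\halflines},
    \qquad
    \lambda_n^{\Neumann,(0)}\ge \frac{2(n-1)b_0}{\halflines}.
\]

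The second step counts the summands. The total number of copies of reduced operators in the decomposition is
\[
    1+\sum_{k=0}^{N_{\Graph}} m_k(b_k-1)=1+\sum_{k}(m_{k+1}-m_k)=m_{N_{\Graph}+1}=\halflines,
\]
using $m_{k+1}=m_kb_k$ and $m_0=1$; this telescoping must be verified. Hence $\OU^{\mathrm X}$ is unitarily equivalent to an orthogonal sum of exactly $\halflines$ one-dimensional operators $S_1,\dots,S_{\halflines}$. In the Dirichlet case all of them are of Dirichlet type; in the Neumann case exactly one, $\OUNeuLevel{0}$, is of Neumann type.

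The third step is the counting itself. Fix $\Lambda<(2q+1)b_0/\halflines$. In the Dirichlet case each $S_i$ has at most $q$ eigenvalues $\le\Lambda$, since $\lambda_{q+1}(S_i)\ge(2q+1)b_0/\halflines$. Hence $N_{\OUDir}(\Lambda)\le q\halflines$, so $\lambda^{\Dirichlet}_{q\halflines+1}\ge(2q+1)b_0/\halflines$. The remaining inequalities in the chain are just monotonicity of the enumeration. For Neumann, take $\Lambda<(2q+1)b_0/\halflines$: the $\halflines-1$ Dirichlet summands contribute at most $q$ eigenvalues each, and $\OUNeuLevel{0}$ contributes at most $q+1$, because $\lambda_{q+2}^{\Neumann,(0)}\ge 2(q+1)b_0/\halflines>\Lambda$. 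This gives $N(\Lambda)\le q\halflines+1$, hence $\lambda^{\Neumann}_{q\halflines+2}\ge(2q+1)b_0/\halflines$. Similarly, for $\Lambda<2qb_0/\halflines$, each Dirichlet summand contributes at most $q$ eigenvalues (the $(q+1)$-st is $\ge (2q+1)b_0/\halflines$), while the Neumann summand has at most $q$ eigenvalues, since its $(q+1)$-st is $\ge 2qb_0/\halflines$. This gives $N(\Lambda)\le q\halflines$ and hence $\lambda^{\Neumann}_{q\halflines+1}\ge 2qb_0/\halflines$.

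The main point needing care is the first step. We must check that the Gaussian eigenvalue bounds from Lemma~\ref{lem:half-line-domain-monotonicity} apply on $[t_k,\infty)$ for the pure weight $G$, which is the stated "in particular" with $a=t_k$. We must also confirm that Remark~\ref{rem:rayleigh-comparison} is used with the global bounds $b_0\le g_{\Graph}\le\halflines$ rather than local ones, since these are exactly the constants producing the ratio $b_0/\halflines$ in the statement.
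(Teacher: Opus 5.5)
Your proposal is correct and follows the paper's proof essentially step for step. Both arguments rest on the same three ingredients:
\begin{itemize}
\item the uniform bounds $\lambda_{q+1}^{\Dirichlet,(k)}\ge (2q+1)b_0/\halflines$ and $\lambda_{q+1}^{\Neumann,(0)}\ge 2qb_0/\halflines$, obtained from Lemma~\ref{lem:half-line-domain-monotonicity} and Remark~\ref{rem:rayleigh-comparison} with the global bounds $b_0\le g_{\Graph}\le \halflines$;
\item the fact that the decomposition in Theorem~\ref{thm:operator-decomposition} has exactly $\halflines$ summands;
\item additivity of counting functions over the orthogonal sum.
\end{itemize}

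The differences are only cosmetic. You count eigenvalues $\le\Lambda$ for every $\Lambda$ strictly below the threshold, whereas the paper uses $N_T(\Lambda-)$ at the threshold; the two are equivalent. You also verify the telescoping identity $1+\sum_{k} m_k(b_k-1)=\halflines$, which the paper simply asserts. Finally, you explicitly justify that the Neumann summand contributes at most $q+1$ eigenvalues below $(2q+1)b_0/\halflines$, which the paper leaves implicit.
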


We point out that the second Neumann block inequality in Proposition~\ref{prop:spectrum-refined} is void when $\halflines=1$.

To exploit the operator decomposition at the level of eigenvalue counts,
it is convenient to work with counting functions, which were used in a
heuristic way in Section~\ref{sec:ou-general}.
Let $T$ be a self-adjoint operator with discrete spectrum. For lower bounds on eigenvalues, we use the function
$N_T(\Lambda-)$ that denotes the number of eigenvalues of $T$ (counted with multiplicity) that are
strictly less than $\Lambda$. For upper bounds,  we use $N_T(\Lambda)$ that denotes the number of eigenvalues of $T$ (counted with multiplicity) that are
less than or equal to $\Lambda$. These two quantities differ only at eigenvalues:
\[
    N_T(\Lambda-) \le N_T(\Lambda) \le N_T(\Lambda-) + m(\Lambda);
\]
where $m(\Lambda)$ denotes the multiplicity of $\Lambda$ as an eigenvalue of $T$.
Observe that
\begin{equation}
    \label{eq:counting-function-direct-sum}
    N_{\bigoplus_{j=1}^r T_j}(\Lambda-) = \sum_{j=1}^r N_{T_j}(\Lambda-)
    \quad\text{and}\quad
    N_{\bigoplus_{j=1}^r T_j}(\Lambda) = \sum_{j=1}^r N_{T_j}(\Lambda).
\end{equation}
Also, if $\lambda_n(T)$ denotes the $n$-th eigenvalue of $T$, then
\begin{align*}
     N_T(\Lambda) \ge n\,
     \Longleftrightarrow\,
    \lambda_n(T) \le \Lambda\,
    \quad\text{and}\quad
    \lambda_n(T) \ge \Lambda\
    \Longleftrightarrow\,
    N_T(\Lambda-) \le n-1.
\end{align*}
We now apply these properties to the orthogonal decomposition from
Theorem~\ref{thm:operator-decomposition}.

\begin{proof}[Proof of Proposition~\ref{prop:spectrum-refined}]
    We use the operator decomposition from Theorem~\ref{thm:operator-decomposition}.
    
    \emph{Dirichlet case.}
    As the eigenvalues are listed in increasing order, it suffices to show $\lambda_{q\,\halflines+1}^{\Dirichlet} \ge  (2q+1)b_0\halflines^{-1}$.
    The operator $\OUDir$ is the orthogonal sum of
    \[
        b_0+\sum_{k=1}^{N_{\Graph}} m_k\ (b_k-1) = \halflines
    \]
    reduced Dirichlet
    operators; where $m_k \coloneqq b_0b_1\ldots b_{k-1}$. Fix $q\in\bbN_0$.
    Since $b_0\le g_{\Graph}\le \halflines$, Lemma~\ref{lem:half-line-domain-monotonicity} combined with Remark~\ref{rem:rayleigh-comparison} yield for each reduced operator that
    \[
        \lambda_{q+1}^{\Dirichlet,(k)} \ge \frac{(2q+1)b_0}{\halflines},
    \]
    i.e., each reduced Dirichlet operator has at most $q$ eigenvalues strictly below $(2q+1)b_0\halflines^{-1}$.
    In particular,
    \[
        N_{\OUDirLevel{k}}\big((2q+1)b_0\halflines^{-1}-\big) \le q.
    \]
    Summing over the $\halflines$ components, we obtain
    $
        N_{\OUDir}\big((2q+1)b_0\halflines^{-1}-\big)
        \le q\,\halflines
    $,
    or equivalently, $\lambda_{q\,\halflines+1}^{\Dirichlet} \ge  (2q+1)b_0\halflines^{-1}$.
    
    \emph{Neumann case.}
    The operator $\OUNeu$ decomposes into one reduced Neumann operator and
    $\halflines-1$ reduced Dirichlet operators.
    Again by Lemma~\ref{lem:half-line-domain-monotonicity} and Remark~\ref{rem:rayleigh-comparison}, we have
    \[
        \lambda_{q+1}^{\Neumann,(0)} \ge \frac{2q\cdot b_0}{\halflines}
        \quad \text{and}\quad
        \lambda_{q+1}^{\Dirichlet,(k)} \ge \frac{(2q+1)b_0}{\halflines}.
    \]
    It follows that
    \[
        N_{\OUNeuLevel{0}}\big(2qb_0\halflines^{-1}-\big)
        \le q
        \quad \text{and}\quad
        N_{\OUDirLevel{k}}\big(2qb_0\halflines^{-1}-\big)
        \le q.
    \]
    Hence
    \[
        N_{\OUNeu}\big(2qb_0\halflines^{-1}-\big)
        \le q + (\halflines-1)q = q\,\halflines,
    \]
    which is equivalent to, $\lambda_{q\,\halflines+1}^{\Neumann} \ge 2qb_0\halflines^{-1}$.
    Finally, we are left to show that $\lambda_{q\,\halflines+2}^{\Neumann} \ge (2q+1)b_0\halflines^{-1}$. Indeed, as above, we can infer from Lemma~\ref{lem:half-line-domain-monotonicity} and Remark~\ref{rem:rayleigh-comparison} that
    \[
        N_{\OUNeuLevel{0}}\big((2q+1)b_0\halflines^{-1}-\big)
        \le q+1,
        \quad \text{and}\quad
        N_{\OUDirLevel{k}}\big((2q+1)b_0\halflines^{-1}-\big)
        \le q.
    \]
    Thus,
    $
        N_{\OUNeu}\big((2q+1)b_0\halflines^{-1}-\big)
        \le (q+1)+(\halflines-1)q = q\,\halflines+1
    $, 
    which yields
    $
        \lambda_{q\,\halflines+2}^{\Neumann} \ge (2q+1)b_0\halflines^{-1}
    $.
\end{proof}

We next localise the spectrum of the full operator within blocks
determined by the reduced spectra.

\begin{proposition}
    \label{prop:spectrum-block-localization}
    For each $n\in\bbN$, the Dirichlet eigenvalues satisfy
    \[
        \lambda_n^{\Dirichlet,(0)}
        \le
        \lambda_{\halflines(n-1)+1}^{\Dirichlet}
        \le \cdots \le
        \lambda_{\halflines n}^{\Dirichlet}
        \le
        \lambda_n^{\Dirichlet,(N_{\Graph})}
    \]
    and
    the Neumann eigenvalues satisfy
    \[
        \lambda_n^{\Neumann,(0)}
        \le
        \lambda_{\halflines(n-1)+1}^{\Neumann}
        \le \cdots \le
        \lambda_{\halflines n}^{\Neumann}
        \le
        \lambda_n^{\Dirichlet,(N_{\Graph})}.
    \]
\end{proposition}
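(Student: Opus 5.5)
Since the middle inequalities are just the increasing ordering of the eigenvalues, only the two outer bounds need proof. We combine the orthogonal decomposition of Theorem~\ref{thm:operator-decomposition}, the counting-function identities \eqref{eq:counting-function-direct-sum}, and the monotonicity of Proposition~\ref{prop:monotonicity-reduced-spectrum}. By Theorem~\ref{thm:operator-decomposition}, in both the Dirichlet and the Neumann case $\OU^{\mathrm X}$ is unitarily equivalent to an orthogonal sum of exactly
\[
    b_0+\sum_{k=1}^{N_{\Graph}} m_k(b_k-1)=\halflines
\]
reduced operators. Each summand is either some $\OUDirLevel{k}$ with $0\le k\le N_{\Graph}$, or, in the Neumann case only, the single operator $\OUNeuLevel{0}$.

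\emph{Lower bound.} Fix $n$ and set $\Lambda\coloneqq \lambda_n^{\mathrm X,(0)}$. For the Dirichlet summands, Proposition~\ref{prop:monotonicity-reduced-spectrum} gives $\lambda_n^{\Dirichlet,(k)}\ge \lambda_n^{\Dirichlet,(0)}$. In the Neumann case we also need $\lambda_n^{\Dirichlet,(k)}\ge \lambda_n^{\Neumann,(0)}$. This follows from the form inclusion $H^1_{0,\nu}([t_0,\infty))\subset H^1_\nu([t_0,\infty))$ via min--max, which yields $\lambda_n^{\Dirichlet,(0)}\ge\lambda_n^{\Neumann,(0)}$, followed by monotonicity in $k$. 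Consequently every summand $T$ satisfies $\lambda_n(T)\ge\Lambda$, that is, $N_T(\Lambda-)\le n-1$. Summing over the $\halflines$ summands with \eqref{eq:counting-function-direct-sum} gives $N_{\OU^{\mathrm X}}(\Lambda-)\le \halflines(n-1)$. By the stated equivalence between counting functions and eigenvalue indices, this is exactly $\lambda^{\mathrm X}_{\halflines(n-1)+1}\ge\Lambda$.

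\emph{Upper bound.} Set $\Lambda\coloneqq\lambda_n^{\Dirichlet,(N_{\Graph})}$. Every Dirichlet summand satisfies $\lambda_n^{\Dirichlet,(k)}\le\Lambda$ by Proposition~\ref{prop:monotonicity-reduced-spectrum}, hence $N_T(\Lambda)\ge n$. In the Neumann case, the summand $\OUNeuLevel{0}$ satisfies $\lambda_n^{\Neumann,(0)}\le\lambda_n^{\Dirichlet,(0)}\le\Lambda$ by the same form inclusion as above. Summing over all summands, $N_{\OU^{\mathrm X}}(\Lambda)\ge \halflines n$, which is equivalent to $\lambda^{\mathrm X}_{\halflines n}\le\Lambda$.

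The argument is pure bookkeeping, and its only delicate point is the Neumann comparison step. There the Neumann summand must be compared with Dirichlet ones at the same level $t_0$ but on different form domains. The inequality $\lambda_n^{\Dirichlet,(0)}\ge\lambda_n^{\Neumann,(0)}$ comes from min--max applied to the nested form domains with the same form $\form_0$ and the same measure $\nu$. We must also check that the multiplicities in Theorem~\ref{thm:operator-decomposition} sum to $\halflines$ in both cases, using $m_k b_k=m_{k+1}$ to telescope.
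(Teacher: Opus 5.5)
Your proof is correct and follows the paper's argument: you use the unitary equivalence with an orthogonal sum of $\halflines$ reduced operators, monotonicity of the reduced eigenvalues in the level $k$, and summation of the counting functions $N_T(\Lambda-)$ and $N_T(\Lambda)$ over the summands. The only inessential difference is in the Neumann chain: you compare via $\lambda_n^{\Neumann,(0)}\le\lambda_n^{\Dirichlet,(0)}\le\lambda_n^{\Dirichlet,(k)}$, whereas the paper uses $\lambda_n^{\Neumann,(0)}\le\lambda_n^{\Neumann,(k)}\le\lambda_n^{\Dirichlet,(k)}$, and both routes rest on the same form-domain inclusion and level monotonicity.
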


\begin{proof}
    We write
    \[
        m_0 \coloneqq 1
        \quad\text{and}\quad
        m_k\coloneqq b_0b_1b_2\ldots b_{k-1}, \quad k\ge 1
    \]
    so that $\halflines=b_0+\sum_{k\ge 1} m_k(b_k-1)$. To obtain the bounds, we use  the counting functions $N_T(\Lambda)$ and $N_T(\Lambda-)$ introduced previously.

    \emph{Dirichlet case}. 
    As the eigenvalues are ordered increasingly, it suffices to show the bounds 
    $\lambda_n^{\Dirichlet,(0)}
    \le
    \lambda_{\halflines(n-1)+1}^{\Dirichlet}$
    and 
    $\lambda_{\halflines n}^{\Dirichlet}
    \le
    \lambda_n^{\Dirichlet,(N_{\Graph})}$.
    By Theorem~\ref{thm:operator-decomposition},
    $\OUDir$ is unitarily equivalent to the orthogonal sum of $\halflines$ reduced
    Dirichlet operators, counted with multiplicity. 
    From Proposition~\ref{prop:monotonicity-reduced-spectrum},
    $
        \lambda_n^{\Dirichlet,(k)}
        \le
        \lambda_n^{\Dirichlet,(N_{\Graph})}
    $
    for every
    $k=0,\ldots,N_{\Graph}$,
    so the $n$-th eigenvalue of each reduced Dirichlet component is at most
    $\lambda_n^{\Dirichlet,(N_{\Graph})}$.
    Hence $n\le N_{\OUDirLevel{k}}\left(\lambda_n^{\Dirichlet,(N_{\Graph})}\right)$.
    Consequently, using~\eqref{eq:counting-function-direct-sum}
    \begin{align*}
        N_{\OUDir}\left(\lambda_n^{\Dirichlet,(N_{\Graph})}\right)
        &= b_0 N_{\OUDirLevel{0}}\left(\lambda_n^{\Dirichlet,(N_{\Graph})}\right) +
        \sum_{k\ge1 } m_k(b_k-1) N_{\OUDirLevel{k}}\left(\lambda_n^{\Dirichlet,(N_{\Graph})}\right) \\
        & \ge  \left(b_0+\sum_k m_k(b_k-1)\right) \cdot n = \halflines n.
    \end{align*}
    In other words, $\OUDir$ has at least $\halflines n$ eigenvalues $\le \lambda_n^{\Dirichlet,(N_{\Graph})}$, which guarantees $\lambda_{\halflines n}^{\Dirichlet} \le \lambda_n^{\Dirichlet,(N_{\Graph})}$.

    For the other bound, recall from Proposition~\ref{prop:monotonicity-reduced-spectrum} that $\lambda_n^{\Dirichlet, (0)}\le \lambda_n^{\Dirichlet, (k)}$ for each $k$. 
    Thus, there are at most $n-1$ eigenvalues of $\OUDirLevel{k}$ strictly less than $\lambda_n^{\Dirichlet, (0)}$, equivalently, $N_{\OUDirLevel{k}}(\lambda_n^{\Dirichlet, (0)}-)\le n-1$. We infer, as above, that 
    $
        N_{\OUDir}\left(\lambda_n^{\Dirichlet,(0)}-\right)
        \le
        \halflines(n-1)
    $,
    which is equivalent to the desired bound
    $
        \lambda_{\halflines(n-1)+1}^{\Dirichlet}
        \ge
        \lambda_n^{\Dirichlet,(0)}.
    $

    \emph{Neumann case}. 
    The Neumann case is treated similarly, with the additional observation that the decomposition contains exactly one Neumann component and $\halflines-1$ Dirichlet components.

    For the upper bound, observe that for each $k$, Proposition~\ref{prop:monotonicity-reduced-spectrum} implies
    \begin{equation}
        \label{eq:spectrum-block-localization:monotonicity}
        \lambda_n^{\Neumann,(0)}
        \le
        \lambda_n^{\Neumann,(k)}
        \le
        \lambda_n^{\Dirichlet,(k)}
        \le
        \lambda_n^{\Dirichlet,(N_{\Graph})};
    \end{equation}
    where the second inequality holds by the inclusion of the corresponding form domains.
    Therefore every component in the Neumann decomposition has at least $n$
    eigenvalues not exceeding $\lambda_n^{\Dirichlet,(N_{\Graph})}$. Since there
    are $\halflines$ components in total,
    $
        N_{\OUNeu}\left(\lambda_n^{\Dirichlet,(N_{\Graph})}\right)
        \ge
        \halflines n,
    $
    equivalently
    $
        \lambda_{\halflines n}^{\Neumann}
        \le
        \lambda_n^{\Dirichlet,(N_{\Graph})}
    $.

    For the lower bound,~\eqref{eq:spectrum-block-localization:monotonicity} ensures that the reduced Neumann component and each reduced Dirichlet component
    has at most $n-1$ eigenvalues strictly smaller than
    $\lambda_n^{\Neumann,(0)}$. Therefore
    \[
        N_{\OUNeu}\left(\lambda_n^{\Neumann,(0)}-\right)
        \le
        \halflines(n-1)
    \]
    which yields
    $
        \lambda_{\halflines(n-1)+1}^{\Neumann}
        \ge
        \lambda_n^{\Neumann,(0)}
    $.
\end{proof}

We are now ready to state the main result of this section, which
shows that the spectrum of $\OU^{\mathrm X}$ grows linearly with $n$,
with effective scaling factor $\halflines^{-1}$ at the lower end and
harmonic-oscillator asymptotics at the upper end.

\begin{theorem}
    \label{thm:regular-tree-spectral-blocks}
    Let $\mathrm X\in\{\Dirichlet,\Neumann\}$. For each $n\in\bbN$, the eigenvalues of $\OU^{\mathrm X}$ satisfy
    \[
        \lambda_n^{\mathrm X,(0)}
        \le
        \lambda_{\halflines(n-1)+1}^{\mathrm X}
        \le \cdots \le
        \lambda_{\halflines n}^{\mathrm X}
        \le
        \lambda_n^{\Dirichlet,(N_{\Graph})},
    \]
    where the upper bound obeys
    \[
        \lambda_n^{\Dirichlet,(N_{\Graph})}
        \sim
        2n - 1 + \frac{4 t_{N_{\Graph}}}{\pi}\sqrt{n}
        + \frac{4t_{N_{\Graph}}^2}{\pi^2},
        \qquad n\to \infty
    \]
   and the lower endpoint admits the following explicit $\halflines$-dependent lower bounds: 
    \[
        \lambda_n^{\mathrm X,(0)}
        \ge
        \begin{cases}
            (2n-1)b_0\halflines^{-1},
            & \mathrm X=\Dirichlet,\\[1em]
            2(n-1)b_0 \halflines^{-1},
            & \mathrm X=\Neumann
        \end{cases}
    \]
    for all $n\in \bbN$.
\end{theorem}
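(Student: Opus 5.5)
The chain of inequalities is exactly Proposition~\ref{prop:spectrum-block-localization}, which handles the Dirichlet and Neumann cases separately. For $\mathrm X=\Neumann$ the upper endpoint there is already $\lambda_n^{\Dirichlet,(N_{\Graph})}$. So the chain needs no new work, and two things remain to prove: the asymptotics of $\lambda_n^{\Dirichlet,(N_{\Graph})}$, and the explicit lower bounds for $\lambda_n^{\mathrm X,(0)}$.

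\emph{Lower bounds.} On $[0,\infty)$ the branching function satisfies $b_0\le g_{\Graph}\le \halflines$. We apply Remark~\ref{rem:rayleigh-comparison} with $a=0$, $c=b_0$ and $C=\halflines$. By the min--max principle, the eigenvalues of $\OUXLevel{0}$ are then bounded below by $b_0\halflines^{-1}$ times the eigenvalues of the half-line Gaussian problem on $[0,\infty)$ with the same boundary condition at $0$. Those eigenvalues are $2n-1$ in the Dirichlet case and $2(n-1)$ in the Neumann case, by the final assertion of Lemma~\ref{lem:half-line-domain-monotonicity}, that is, by \cite[Lemma~4.7]{MugnoloRhandi2022}. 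This gives the asserted bounds directly.

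\emph{Upper endpoint asymptotics.} By Proposition~\ref{prop:domain-reduced-operator}, $\OUDirLevel{N_{\Graph}}$ lives on $[T,\infty)$ with $T=t_{N_{\Graph}}$. There $\nu$ is a constant multiple of $G(t)\dx t$, so the form is a scalar multiple of the Gaussian form in both numerator and denominator. Hence $\OUDirLevel{N_{\Graph}}$ coincides with the one-dimensional Dirichlet Ornstein--Uhlenbeck operator $L_T^{\Dirichlet}$ on $[T,\infty)$, and we need the three-term asymptotics of its eigenvalues.

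I would take these from the appendix (Theorem~\ref{thm:OU-half-line-asymptotics}), which I expect to contain this refined expansion. If it only gives $\lambda_n\sim 2n$, I would derive the expansion by the following route.
\begin{itemize}
\item Conjugate by $e^{-t^2/2}$ to obtain the Schrödinger operator $\tfrac12(-\partial_t^2+t^2-1)$ on $[T,\infty)$ with a Dirichlet condition at $T$.
\item Apply the WKB/Bohr--Sommerfeld quantisation condition $\int_T^{\sqrt{2\lambda+1}}\sqrt{2\lambda+1-t^2}\,\dx t=\pi(n-\tfrac14)+o(1)$, together with the Dirichlet phase at $T$.
\item Expand the left side as $\tfrac{\pi}{4}E - T\sqrt E+O(E^{-1/2})$ with $E=2\lambda+1$, and solve for $\sqrt E$.
\end{itemize}
This yields $\sqrt{2\lambda}\approx 2\sqrt n+ 2T/\pi$, hence $\lambda\approx 2n+\tfrac{4T}{\pi}\sqrt n+\tfrac{2T^2}{\pi^2}$ up to constants.

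One must then check that the constants match the statement, $2n-1+\tfrac{4T}{\pi}\sqrt n+\tfrac{4T^2}{\pi^2}$. Since the claim uses $\sim$, only the leading term $2n$ is strictly needed, and the lower-order terms could be stated as refinement. As an alternative route, one could use the parabolic cylinder function representation: eigenvalues are zeros in $\lambda$ of $D_{\lambda}(\sqrt2\,T)$, or of a Hermite function, and the Plancherel--Rotach asymptotics give their large-order zeros.

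The main obstacle is this asymptotic expansion of the shifted Dirichlet half-line eigenvalues with the correct second and third terms. The inequality chain and the lower bounds are immediate consequences of earlier results. If the explicit error control proves too delicate, I would at least establish $\lambda_n^{\Dirichlet,(N_{\Graph})}\sim 2n$. That follows from the appendix theorem, or from the sandwich given by Lemma~\ref{lem:half-line-domain-monotonicity}, $2n-1\le\lambda_n^{\Dirichlet,(N_{\Graph})}$, combined with an upper bound obtained from the $\halflines$-fold comparison in Proposition~\ref{prop:linear-asymptotics-general}.
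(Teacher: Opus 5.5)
Your proposal is correct and follows the paper's own proof. The chain is Proposition~\ref{prop:spectrum-block-localization}, and the asymptotics of $\lambda_n^{\Dirichlet,(N_{\Graph})}$ come from Theorem~\ref{thm:OU-half-line-asymptotics} with $a=t_{N_{\Graph}}$; your observation that $\nu$ is a constant multiple of $G(t)\dx t$ on $[t_{N_{\Graph}},\infty)$ is exactly the identification the paper leaves implicit. The lower bounds come, as in the paper, from Remark~\ref{rem:rayleigh-comparison} with $b_0\le g_{\Graph}\le\halflines$, together with the $a=0$ eigenvalues $2n-1$ and $2(n-1)$ recorded in Lemma~\ref{lem:half-line-domain-monotonicity}.

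Your fallback is not needed, but it has two flaws. First, the WKB step $\sqrt{2\lambda}\approx 2\sqrt n+2T/\pi$ (with $T=t_{N_{\Graph}}$) discards $O(n^{-1/2})$ terms that contribute at order one after squaring. Solving the quadratic in $\sqrt{2\lambda+1}$ exactly gives $2n-1+\frac{4T}{\pi}\sqrt n+\frac{4T^2}{\pi^2}$. Second, the sandwich via Proposition~\ref{prop:linear-asymptotics-general} is circular, since that proposition itself relies on Theorem~\ref{thm:OU-half-line-asymptotics}.
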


\begin{proof}
    The chain of inequalities and the upper bound behaviour follow from Proposition~\ref{prop:spectrum-block-localization} and Theorem~\ref{thm:OU-half-line-asymptotics} respectively. On the other hand, as $b_0\le g_{\Graph}\le \halflines$, the lower bound can be deduced by Lemma~\ref{lem:half-line-domain-monotonicity} together with Remark~\ref{rem:rayleigh-comparison}.
\end{proof}

\appendix

\section{Ornstein--Uhlenbeck Operator on
\texorpdfstring{$[a,\infty)$}{[a,∞)}}
    \label{appendix:half-line}

In this appendix, for $a\ge 0$, we seek the eigenvalues of the Ornstein--Uhlenbeck operator defined by $\calL_a u = -\frac{1}{2}u'' + x u'$ on $L^2_\gamma([a,\infty))$, where
\[
    \gamma(\dxShort x)
    \coloneqq
    \varrho(x)\dx x
    =\frac{1}{\sqrt\pi}e^{-x^2}\dx x.
\]
subject to the Dirichlet and Neumann boundary conditions at $x=a$, i.e., with domains
\begin{align*}
    \domDir{\calL_a} & = \{ u\in H^2_{\gamma}([a, \infty)) : u(a)=0\}
    \quad\text{and}\\
    \domNeu{\calL_a} & = \{ u\in H^2_{\gamma}([a, \infty)) : u'(a)=0\},
\end{align*}
respectively. For convenience, we write $\calL_a^{\Dirichlet}\coloneqq (\calL_a,\domDir{\calL_a})$ and $\calL_a^{\Neumann}\coloneqq (\calL_a,\domNeu{\calL_a})$.
Because the operators are essentially a gauge-transformed quantum harmonic oscillator, their spectrum is discrete, bounded below, and diverges to infinity. 

\begin{proposition}
    \label{prop:half-line-characteristic-equation}
    Let $D_{\lambda}(\argument)$ denote the parabolic cylinder functions. Then
    \begin{align*}
        \spec(\calL_a^{\Dirichlet})=\left\{ \lambda>0: D_{\lambda}(\sqrt 2 a)=0\right\}
        \quad \text{and}\quad
        \spec(\calL_a^{\Neumann})=\{0\} \cup \left\{\lambda>0: D_{\lambda-1}(\sqrt 2 a)=0\right\}.
    \end{align*}
\end{proposition}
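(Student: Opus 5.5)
The plan is to reduce the eigenvalue problem for $\calL_a$ to the Weber (parabolic cylinder) equation by the standard gauge transformation, and then read off the boundary condition at $x=a$.

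\emph{Step 1: the transformation.} Let $\lambda\in\bbC$ and let $u$ solve $-\frac12 u''+xu'=\lambda u$ on $(a,\infty)$. Set $u(x)=e^{x^2/2}w(x)$. A direct computation gives
\[
    w''-(x^2-1-2\lambda)w=0 .
\]
Substituting $z=\sqrt2 x$ turns this into
\[
    \frac{\dxShort^2 w}{\dxShort z^2}+\Bigl(\lambda+\tfrac12-\tfrac{z^2}{4}\Bigr)w=0,
\]
which is Weber's equation with index $\nu=\lambda$. Its solution space is spanned by $D_\lambda(z)$ and $D_{-\lambda-1}(iz)$.

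\emph{Step 2: the condition at infinity.} Membership in $L^2_\gamma$ means $\int |u|^2e^{-x^2}\,\dxShort x=\int|w|^2\,\dxShort x<\infty$. As $z\to+\infty$ we have $D_\lambda(z)\sim z^\lambda e^{-z^2/4}$, which is square integrable. The second solution $D_{-\lambda-1}(iz)$ grows like $z^{-\lambda-1}e^{z^2/4}$, so any combination with a nonzero coefficient on it fails to lie in $L^2$.

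The same applies to $u'$, since $H^2_\gamma$ also requires $u',u''\in L^2_\gamma$. Here one checks that $u=e^{z^2/4}D_\lambda(z)$ behaves polynomially, like $z^\lambda$. Hence $u,u',u''\in L^2_\gamma$, and this choice lies in $H^2_\gamma([a,\infty))$.

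Consequently every eigenfunction is a multiple of $u_\lambda(x)=e^{x^2/2}D_\lambda(\sqrt2x)$. Self-adjointness and non-negativity (the operator comes from the form $\frac12\int|u'|^2\,\dxShort\gamma$) give $\lambda\ge0$. Compactness of the resolvent gives $\spec=$ the set of eigenvalues. For the claimed identity I would use this compact-resolvent/form argument, or cite it from Section~\ref{sec:ou-general}.

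\emph{Step 3: the Dirichlet condition.} The requirement $u_\lambda(a)=0$ is equivalent to $D_\lambda(\sqrt2a)=0$. The case $\lambda=0$ is excluded: $D_0(z)=e^{-z^2/4}$ has no zeros, and in any case $\lambda=0$ would force $u$ to be constant, hence $u=0$. This gives the stated Dirichlet spectrum.

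\emph{Step 4: the Neumann condition.} Differentiate $u_\lambda$ using the recurrence
\[
    D_\lambda'(z)=\tfrac z2D_\lambda(z)-D_{\lambda+1}(z).
\]
This yields
\[
    u_\lambda'(x)=\sqrt2\,e^{x^2/2}\Bigl(\tfrac{z}{2}D_\lambda(z)+D_\lambda'(z)\Bigr)=\sqrt2\,e^{x^2/2}\,\lambda D_{\lambda-1}(z),
\]
after combining with the other recurrence $D_{\lambda+1}(z)-zD_\lambda(z)+\lambda D_{\lambda-1}(z)=0$. Equivalently, one can use the identity $\frac{\dxShort}{\dxShort z}\bigl(e^{z^2/4}D_\lambda(z)\bigr)=\lambda e^{z^2/4}D_{\lambda-1}(z)$.

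So $u_\lambda'(a)=0$ holds if and only if $\lambda=0$ (the constants) or $D_{\lambda-1}(\sqrt2a)=0$ with $\lambda>0$. This gives the Neumann spectrum.

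\emph{Main obstacle.} The delicate points are the precise asymptotics that rule out the second solution, uniformly over all real $\lambda$ and including the $H^2_\gamma$ requirement, and getting the correct sign and form of the derivative identity. I would verify the latter directly from the integral representation, or from the Hermite case $\lambda\in\bbN_0$, where $e^{z^2/4}D_n(z)$ is a rescaled Hermite polynomial and $H_n'=2nH_{n-1}$ confirms the factor $\lambda$.
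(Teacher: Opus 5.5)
Your proposal is correct and follows essentially the same route as the paper: the gauge transformation to Weber's equation, selection of the decaying solution $D_\lambda(\sqrt2 x)$ by the condition at infinity, and reading off both boundary conditions via the identity $\frac{d}{dz}\bigl(e^{z^2/4}D_\lambda(z)\bigr)=\lambda e^{z^2/4}D_{\lambda-1}(z)$. You additionally justify three points the paper leaves implicit: why the second solution $D_{-\lambda-1}(iz)$ is excluded, why $u_\lambda$ lies in $H^2_\gamma$, and why the spectrum consists only of eigenvalues (compact resolvent).
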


\begin{proof}
    Let $u\in H^2_{\gamma}((a, \infty))$ be an eigenfunction of 
    $\calL_a^{\mathrm X}$ 
    corresponding to an eigenvalue $\lambda$; where $\mathrm X \in \{\Dirichlet, \Neumann\}$.
    Since $-\calL_a^{\mathrm X}$ generates a contractive semigroup on $L^2_\gamma((a,\infty))$,
    we have $\lambda\ge0$.
    Introduce the transformation
    \[
        w(x)=\sqrt{\varrho(x)} u(x).
    \]
    A direct computation shows that $u$ solves $\calL_a u =\lambda u$ if and only if $w$ satisfies the Weber's equation
    \[
        w''(x) + (2\lambda+1 - x^2) w(x)=0.
    \] 
    Upon rescaling $z=\sqrt{2}x$, this becomes the standard parabolic cylinder equation
    \[
        w''(z) + \left(\lambda+\frac12 - \frac{z^2}{4}\right) w(z) = 0.
    \]
     To satisfy the growth condition at infinity, the unique linearly independent solutions (up to a constant) are:
     \[
        u_\lambda(x)= \varrho(x)^{-1/2} D_\lambda(\sqrt{2}x).
     \]
     In particular, $u_\lambda(a)=0 \Leftrightarrow D_\lambda(\sqrt{2}a)=0$, yielding the assertion in the Dirichlet case
     (note that $D_0(\sqrt 2 a) = e^{-a^2/2} \ne 0)$.
     On the other hand, for the Neumann boundary condition $u'_\lambda(a) = 0$, the chain rule along with the derivative property of $D_\lambda$ yields the characteristic equation
     \[
        \lambda D_{\lambda-1}(\sqrt{2}a) = 0,
     \]
     concluding the proof of the Neumann case.
\end{proof}

We now state the asymptotic behaviour of the eigenvalues.

\begin{theorem}
    \label{thm:OU-half-line-asymptotics}
    Denote the eigenvalues of $\calL_a^\Dirichlet$ and $\calL_a^\Neumann$ (ordered increasingly) by 
    \[
        \lambda_n^{\Dirichlet}(a)
        \quad\text{and}\quad 
        \lambda_n^{\Neumann}(a),
        \qquad 
        (n\in \bbN)
    \]
    respectively. Then, for fixed $a\ge 0$, we have
    \[
        \lambda_n^{\Dirichlet}(a)
        \sim 2n -1 + \frac{4a}{\pi}\sqrt{n} + \frac{4a^2}{\pi^2}
        \quad\text{and}\quad
        \lambda_n^{\Neumann}(a)
        \sim 2(n-1) + \frac{4a}{\pi}\sqrt{n} + \frac{4a^2}{\pi^2}
    \]
    as $n\to \infty$.
\end{theorem}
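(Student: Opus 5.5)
The plan is to read off the zeros of the characteristic functions in Proposition~\ref{prop:half-line-characteristic-equation} from large-order asymptotics of the parabolic cylinder functions $D_\nu(z)$ at a fixed argument $z=\sqrt2 a$. The main tool will be the classical asymptotic formula (see, e.g., the NIST handbook, \S12.10), which holds uniformly for $z$ in compact sets:
\[
    D_\nu(z)=\sqrt2\,\Bigl(\tfrac{\nu}{e}\Bigr)^{\nu/2}\Bigl[\cos\Bigl(z\sqrt{\nu}-\tfrac{\nu\pi}{2}\Bigr)+\calO(\nu^{-1/2})\Bigr],
    \qquad \nu\to\infty .
\]
The prefactor never vanishes. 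Hence, for large $\nu$, the zeros of $\nu\mapsto D_\nu(\sqrt2 a)$ are simple and lie within $\calO(\nu^{-1/2})$ of the solutions of
\[
    \tfrac{\nu\pi}{2}-\sqrt2 a\sqrt\nu=\tfrac{\pi}{2}+(p-1)\pi,\qquad p\in\bbN .
\]
Simplicity follows from a Rouché/intermediate value argument, since the derivative in $\nu$ of the cosine phase is of order one.

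In the Dirichlet case I will solve this quadratic in $s=\sqrt\nu$. This gives $\nu_p=2p-1+\frac{4a}{\pi}\sqrt{p}+\calO(1)$, which has the structure of the claimed expansion. For ``$\sim$'' only $\lambda_n/(2n)\to 1$ is needed, and that follows at once.

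The delicate step is the \emph{indexing}: showing that the $n$-th eigenvalue $\lambda_n^{\Dirichlet}(a)$ corresponds to $p=n+\calO(1)$, and for the stated lower-order terms exactly $p=n$. I will get this from Sturm oscillation theory for the Weber equation $w''+(2\lambda+1-x^2)w=0$ on $[a,\infty)$. The $n$-th Dirichlet eigenfunction has exactly $n-1$ zeros in $(a,\infty)$, and zeros of $u_\lambda$ move monotonically in $\lambda$. As $\lambda$ increases, a new interior zero enters the interval only when $D_\lambda(\sqrt2 a)$ changes sign. So the eigenvalues are precisely the consecutive sign changes of $\lambda\mapsto D_\lambda(\sqrt2 a)$. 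Counting these sign changes from the asymptotic formula, and calibrating with the case $a=0$ where $\lambda_n^{\Dirichlet}(0)=2n-1$ (\cite[Lemma~4.7]{MugnoloRhandi2022}), fixes the offset. An additional check is that a continuous deformation in $a$ cannot create or destroy zeros out of order.

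The Neumann case is analogous. By Proposition~\ref{prop:half-line-characteristic-equation}, the nonzero eigenvalues are the values $\lambda=\nu+1$ at which $D_\nu(\sqrt2 a)=0$. Adding the eigenvalue $0$ as $\lambda_1^{\Neumann}(a)$ shifts the index by one. This turns $2n-1$ into $2(n-1)$, while the $\sqrt n$ and constant terms are unchanged to the order considered.

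I expect the main obstacle to be justifying the uniformity and the error term of the asymptotic formula well enough to localise every zero, including the precise index matching. As a fallback that secures at least the leading-order statement $\lambda_n^{\mathrm X}(a)\sim 2n$ (which is all that Proposition~\ref{prop:linear-asymptotics-general} uses), I will sandwich the eigenvalues. Lemma~\ref{lem:half-line-domain-monotonicity} gives the lower bound $\lambda_n^{\mathrm X}(a)\ge\lambda_n^{\mathrm X}(0)$. For an upper bound I will add Dirichlet conditions at the point $x=a$ on the whole line. The whole-line Ornstein--Uhlenbeck operator has eigenvalues $2n-2$, and by an index shift of at most one (as in Step~1 of the proof of Proposition~\ref{prop:linear-asymptotics-general}), combined with the decoupling into $[a,\infty)$, $[0,a]$ and $(-\infty,0]$, the counting functions add up. The part on $(-\infty,0]$ contributes $\Lambda/2+\calO(1)$ and the compact interval $[0,a]$ contributes only $\calO(\sqrt\Lambda)$. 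This forces $N_{\calL_a^{\mathrm X}}(\Lambda)\ge \Lambda/2-\calO(\sqrt\Lambda)$.
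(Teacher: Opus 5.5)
Your main line is the paper's argument: large-order cosine asymptotics of $D_\nu$ at the fixed argument $\sqrt2 a$, the phase condition solved as a quadratic in $\sqrt\nu$, and the Neumann case read off from $\lambda=\nu+1$ together with the extra eigenvalue $0$. Your Sturm-oscillation count and continuity-in-$a$ calibration at $a=0$ add something the paper lacks: they justify the index matching ($p=n$, resp.\ $p=n-1$ for Neumann), which the paper simply asserts by ``re-indexing by $m=n-2$''.

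One slip in the fallback: the whole-line operator $-\tfrac12u''+xu'$ on $L^2(\bbR,e^{-x^2}\dx x)$ has eigenvalues $n-1$, not $2n-2$ ($2n-2$ is the half-line Neumann spectrum). It is exactly this $N(\Lambda)\approx\Lambda$ that makes your bracketing give $N_{\calL_a^{\mathrm X}}(\Lambda)\ge\Lambda/2-\calO(\sqrt\Lambda)$. Once corrected, and since $\sim$ only asserts that the ratio tends to $1$, that elementary sandwich already suffices for the statement as written.
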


The leading terms agree with the harmonic-oscillator levels on the
half-line. The dependence on the endpoint $a$ enters through the
large-order asymptotics of the zeros of the parabolic cylinder
functions; see \cite{AliliPatiePedersen2005}.
Moreover, for $a=0$ we obtain the well-known spectrum of both realisations of the Ornstein-Uhlenbeck operator on $L^2_{\gamma}(\mathbb{R}_+)$; see~\cite[Lemma~4.7]{MugnoloRhandi2022}.

\begin{proof}[Proof of Theorem~\ref{thm:OU-half-line-asymptotics}]
    We provide the proof for the Neumann case. The computations for the Dirichlet case are analogous.

    By Proposition~\ref{prop:half-line-characteristic-equation}, the eigenvalues of $\calL_a^{\Neumann}$ are determined by the equation $\lambda D_{\lambda - 1}(\sqrt 2 a)=0$. So $\lambda_1^{\Neumann}=0$, and we now consider $n\ge 2$.
    Using the standard WKB (Wentzel--Kramers--Brillouin) approximation for a large order $\nu = \lambda - 1$ and a fixed argument $z$, cf. \cite[page~979]{AliliPatiePedersen2005}, the parabolic cylinder function is dominated by its oscillatory cosine phase:
    \[
        D_{\nu}(z) \sim C \cos\left(z\sqrt{\nu +\frac{1}{2}} - \frac{\pi\nu}{2}\right)
        ;
    \]
    where $C\equiv C(\nu)\neq 0$ is an amplitude factor.
    Hence the zeros of $D_\nu(z)$ are asymptotically determined
    for fixed $z$ and large $m$,
    by the phase condition
    \[
        z\sqrt{\nu+\tfrac12}-\frac{\pi\nu}{2}
        =
        -\left(m+\frac12\right)\pi
        .
    \]
    Substituting $z=\sqrt{2}a$ and simplifying gives
    \[
        \nu-\frac{2a}{\pi}\sqrt{2\nu+1}=2m+1.
    \]
    
    To solve for $\nu$, we substitute $y = \sqrt{2\nu + 1}$ into the previous equation that gives a standard quadratic with respect to $y$:
    \[
        y^2 - \frac{4a}{\pi}y - (4m + 3) = 0.
    \]
    Applying the quadratic formula and taking the positive branch yields
    \[
        y = \frac{2a}{\pi} + \sqrt{\frac{4a^2}{\pi^2} + 4m + 3}.
    \]
    Consequently,
    \[
        \nu = \frac{y^2 - 1}{2} = 
            2m+1+\frac{4a^2}{\pi^2} +\frac{2a}{\pi}\sqrt{4m+3+\frac{4a^2}{\pi^2}}.
    \]
    For large $m$, the Taylor approximation $\sqrt{4m + \ldots} \sim 2\sqrt{m}$ yields
    \[
        \nu \sim 2m+1+\frac{4a\sqrt{m}}{\pi}+\frac{4a^2}{\pi^2}.
    \]
    Recalling $\lambda=\nu+1$ and re-indexing by $m=n-2$, we arrive at
    \[
        \lambda \sim 2(n-1) + \frac{4a\sqrt{n-2}}{\pi}+\frac{4a^2}{\pi^2}.
    \]
    Finally, the estimate $\sqrt{n - 2} \sim \sqrt{n}$ for large $n$, yields
    the desired approximation of $\lambda_n^{\Neumann}$.
\end{proof}

\section*{Acknowledgements} 
Part of the work was done during very pleasant visits of the first author at the University of Salerno and University of Ljubljana.

 This article is based upon work
 from COST Actions 18232 MAT-DYN-NET and 24122 mSPACE, supported by COST (European Cooperation in Science and
 Technology), \url{www.cost.eu}.

\bibliographystyle{plainurl}
\bibliography{literature}

\end{document}